\documentclass[a4paper,10pt]{amsart}
\usepackage[utf8x]{inputenc}
\usepackage{graphicx, xcolor}
\usepackage{amsmath,amssymb,amsthm,comment,mathtools}
\usepackage[colorlinks,allcolors=blue]{hyperref} 
\usepackage{cleveref}
\usepackage{pdfpages}
\usepackage{amsmath}
\usepackage{subfigure,cite}
\usepackage{thm-restate}

\usepackage[a4paper,margin=2.6cm]{geometry}

\newtheorem{thm}{Theorem}[section]
\newtheorem{cor}[thm]{Corollary}

\newtheorem{prop}[thm]{Proposition}
\newtheorem{lem}[thm]{Lemma}

\newtheorem{conj}[thm]{Conjecture}

\newtheorem{ex}[thm]{Example}
\newtheorem{defi}[thm]{Definition}

\newtheorem{question}[thm]{Question}

\def\GA{{\rm GA}(t,k)}
\def\GAsin{{\rm GA}(t,k)'}
\def\lk{{\rm link}}
\def\del{{\rm del}}

\title{Generalized Andrásfai graphs and special Betti diagrams of edge ideals}
 \author[S. Asensio]{Sara Asensio}
 \address{Instituto de Investigaci\'on en Matem\'aticas (IMUVa), Universidad de Valladolid, Valladolid, Spain}
 \email{sara.asensio@uva.es}

 \author[I. García-Marco]{Ignacio García-Marco}
 \address{Instituto de Matem\'aticas y Aplicaciones (IMAULL), Secci\'on de Matem\'aticas, Facultad de
Ciencias, Universidad de La Laguna, 38200, La Laguna, Spain}
 \email{iggarcia@ull.edu.es}

\author[P. Gimenez]{Philippe Gimenez}
\address{Instituto de Investigaci\'on en Matem\'aticas (IMUVa), Universidad de Valladolid, Valladolid, Spain}
 \email{pgimenez@uva.es}

\begin{document}

\begin{abstract}
Edge ideals of graphs were introduced by Villarreal in 1990, and have been the subject of many studies since then. In the same year, Fröberg characterized edge ideals with regularity 2 in combinatorial terms. This result was generalized by Fernández-Ramos and Gimenez to regularity 3 for bipartite graphs. A key ingredient in these results is the particular shape of the Betti diagrams of the edge ideals of the graphs obtained after removing a Hamiltonian cycle from either a complete graph $ K_k$ or a complete bipartite graph $K_{k,k}$.  

In this work, we consider the family of Generalized Andrásfai graphs ${\rm GA}(t,k)$ with $t\geq 1 $ and $k \geq 2$. This family extends the families of complete graphs, since $K_{k+1} = {\rm GA}(1,k)$, and complete bipartite $k$-regular graphs, since $K_{k,k} = {\rm GA}(2,k)$. We show that  the results known for $ K_k$ and $ K_{k,k}$ can be naturally extended to this family. More precisely, when removing a suitable Hamiltonian cycle from ${\rm GA}(t,k)$, the resulting edge ideal has regularity $t+2$, projective dimension $t(k-2)$ and a Betti diagram exhibiting a generalized version of the same special shape. 
\end{abstract}

\maketitle

\section{Introduction}

Monomial ideals are a classic subject of study in Commutative Algebra, both for their intrinsic interest and because they often capture important information about arbitrary homogeneous ideals through initial ideals. Among them, quadratic squarefree monomial ideals, known as edge ideals, provide one of the most fruitful bridges between Commutative Algebra and Graph Theory. Introduced by Villarreal in 1990 \cite{Vil90}, edge ideals have become a fundamental object of study since then (see, for instance, \cite{MillerSturmfels, HerzogHibi, Villarreal2026}).
Although we will focus on how Graph Theory can help us solve a problem concerning edge ideals, there are also results in the opposite direction (see, e.g., \cite{FHVT11, Eng25}).

A fundamental problem in the area is to understand the Betti numbers of an edge ideal in terms of combinatorial properties of the underlying graph. Since explicit formulas for all Betti numbers are rarely available (see the PhD Thesis of Jacques \cite{Jac04} for some examples), a more accessible goal is to determine the location of the nonzero entries in the Betti diagram. We will refer to this region as the {\em shape} of the Betti diagram. Thanks to Hochster's formula \cite{Hoc77}, these questions are often approached through homological methods, although alternative techniques such as splittings have also proved effective \cite{EK90,HVT07,HVT08,HVT22}.

One of the cornerstones in the study of the shape of Betti diagrams is Fröberg's theorem \cite{Fro90}, which states that the edge ideal of a graph has a $2$-linear resolution if and only if the complement graph has no induced cycles of length at least $4$. Later, Eisenbud, Green, Hulek and Popescu \cite{EGHP04} showed that, whenever the resolution is not linear, the first nonlinear syzygies occur in homological degree  $\ell-3$, where $\ell$ denotes the minimum length of an induced cycle of length at least $4$ in the complement graph. Fernández-Ramos and Gimenez  refined this result in \cite{FRG09} by  determining the Betti number $\beta_{\ell -3,\ell}$ combinatorially. Their approach relied on explicit formulas for the Betti numbers of edge ideals of complements of cycles, whose Betti diagrams exhibit a remarkable extremal behavior: the regularity is only attained at the last step of the resolution, and the last row contains a unique nonzero entry equal to $1$.

After this work, the same authors proved in \cite{FRG14} an analogous of Fröberg's Theorem for bipartite graphs. More precisely, they characterized  the edge ideals of bipartite graphs with regularity at most $3$ as those whose bipartite complement does not have an induced cycle of length at least $6$. They also described the first nonzero entry in the third row of the Betti diagram when it exists. Their approach relied on explicit formulas for the Betti numbers of edge ideals of bipartite complements of even cycles, whose Betti diagrams exhibit the same special shape.

These examples naturally lead to the following question: for each $r \geq 3$, does there exist a family of graphs $\mathcal F_r$ whose edge ideals have regularity $r$, arbitrarily large projective dimension, and the same extremal shape of the Betti diagram?
 In this language, the families $\mathcal F_3$ and $\mathcal F_4$ provided in previous works are complements of cycles for $r = 3$, and bipartite complements of even cycles for $r = 4$.
  The purpose of this paper is to answer this question affirmatively. We introduce a family of graphs arising from Generalized Andrásfai graphs that extends the previous ones and solves this problem for all values of $r \geq 3$.

 For $t\geq 1$ and $k \geq 2$, the Generalized Andrásfai graph $\GA$ is a $k$-regular circulant graph on $t(k-1)+2$ vertices (see Section \ref{sub:graph} for the definition, and Figure \ref{fig:GA(3,6)} for an example).  
 \begin{figure}[!ht]
 \centering
 	\includegraphics[width=5cm]{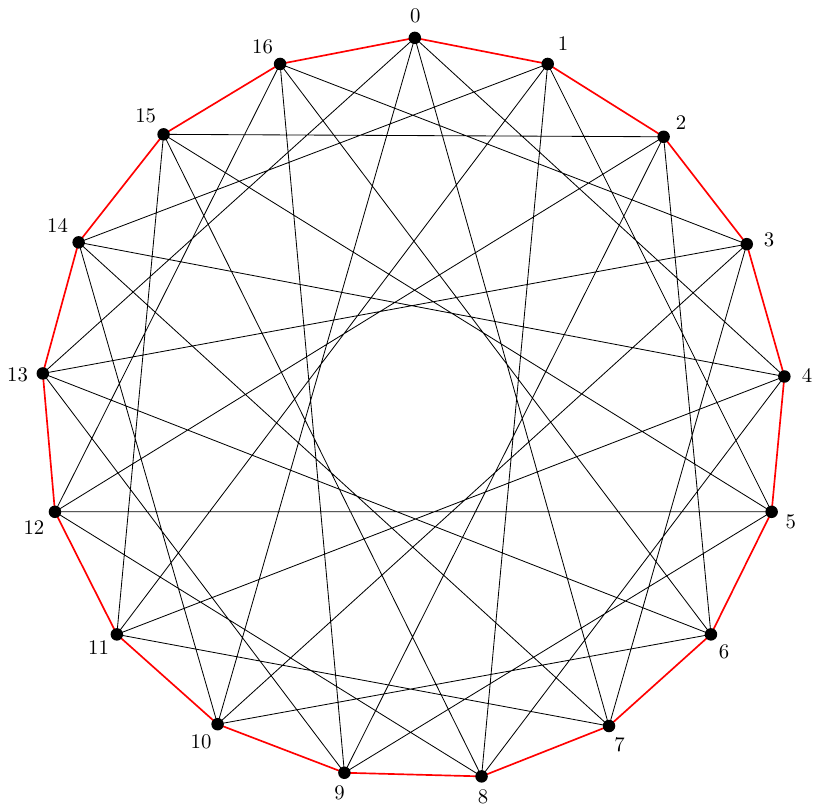}
    \caption{Generalized Andrásfai ${\rm GA}(3,6) = {\rm And}(6)$ with the exterior cycle marked in red}
 	\label{fig:GA(3,6)}
 \end{figure} 
This family was introduced by Das, Biswas and Saha \cite{DBS22} as an extension of the classical Andrásfai graphs \cite{And64} (which first appeared in the context of extremal combinatorics, as they are non-bipartite triangle-free graphs with the largest possible ratio between degree and number of vertices). Generalized Andrásfai graphs correspond to complete graphs when $t=1$ and balanced complete bipartite graphs when $t=2$.

Let ${\rm GA}(t,k)'$ denote the graph obtained from ${\rm GA}(t,k)$ by deleting the edges of a distinguished Hamiltonian cycle, which we call the exterior cycle.  Our main results show that, for every $k\geq3$, the regularity of the edge ideal of $\GAsin$ is $r := t+2$, its projective dimension is $p:= t(k-2)$, the regularity is only attained at the last step of the resolution and  $\beta_{p,p+r} = 1$ (see Figure \ref{fig:BettiDiagramk5} for the Betti tables of ${\rm GA}(t,5)'$ for $t \in \{1,2,3,4\}$). Consequently, for every $r\geq2$, the family
\[
\mathcal F_r=\{{\rm GA}(r-2,k)' \ |\ k\geq3\}
\]
has the desired properties. Moreover, this construction naturally recovers the previously known examples: when $r=3$ one obtains complements of cycles, while $r=4$ yields bipartite complements of even cycles.

\begin{figure}[htb]
\centering
	\includegraphics[width=14cm]{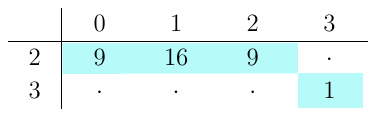}

	\includegraphics[width=14cm]{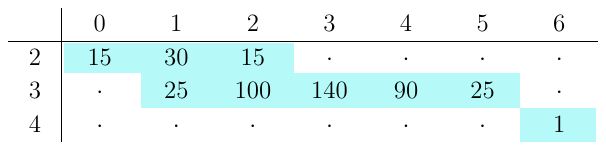}

\includegraphics[width=14cm]{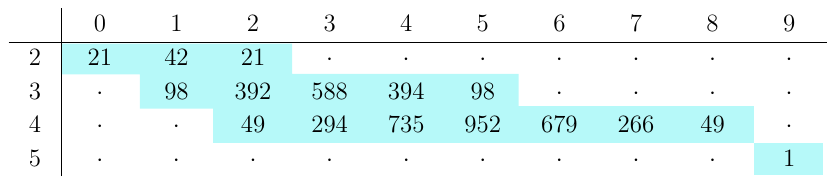}

\includegraphics[width=14cm]{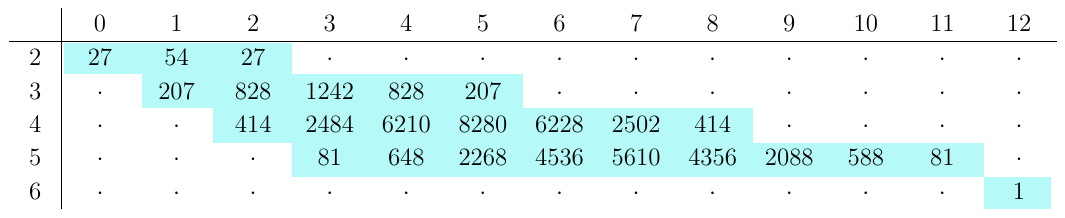}
   
  	\caption{From top to bottom: Betti diagrams of $I({\rm GA}(t,5)')$ for $t\in\{1,2,3,4\}$}
	\label{fig:BettiDiagramk5}
\end{figure}

An important feature of Generalized Andrásfai graphs with $t \geq 3$ is that deleting different Hamiltonian cycles may produce different shapes of Betti diagrams (see Figure \ref{fig:BettiDiagramAnd6}). Thus, the choice of the exterior cycle is essential for our results. This phenomenon does not occur for $t\in\{1,2\}$, as all Hamiltonian cycles are equivalent under automorphisms.

\begin{figure}[htb]
\centering
	\includegraphics[width=13.8cm]{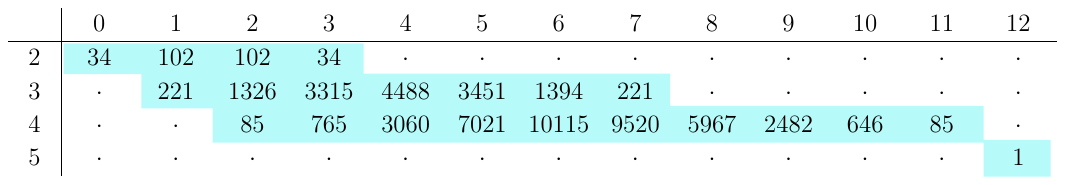}

\includegraphics[width=14cm]{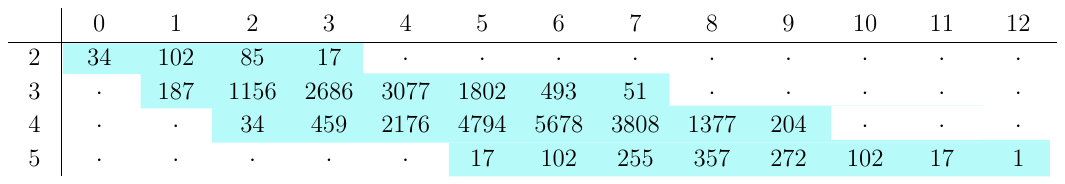}

\includegraphics[width=14cm]{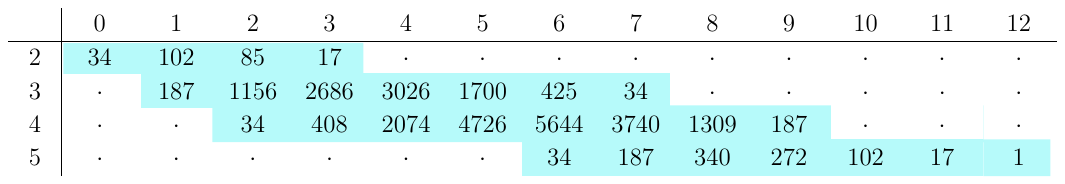}

  	\caption{From top to bottom: Betti diagrams of the edge ideals of ${\rm GA}(3,6)'$, ${\rm GA}(3,6)$ without the edges in the Hamiltonian cycle $C_4 = (4k \mod 17 \, \vert \, 0 \leq k \leq 16)$, and ${\rm GA}(3,6)$ without the edges in the Hamiltonian cycle $C_7 = (7k \mod 17 \, \vert \, 0 \leq k \leq 16)$}
	\label{fig:BettiDiagramAnd6}
\end{figure}

The paper is organized as follows.  In Section~\ref{sec:prel} we introduce the necessary graph-theoretic, algebraic and homological background. Next, we determine all Betti numbers in the linear strand of the edge ideals associated to  $\GAsin$. We then compute the induced matching number of these graphs, which allows us to determine the last nonzero number in the main diagonal of the corresponding Betti diagrams. In Sections \ref{sec:Regularity} and \ref{sec:ProjectiveDimension}, which require more algebraic and homological machinery, we compute the regularity and projective dimension of the edge ideals we are considering. In Section \ref{sec:t3} we completely describe the shape of the Betti diagram when $t = 3$. In the last section we present some open problems that have arisen while working on this article.

\section{Preliminaries}\label{sec:prel}

Since this work involves Combinatorics, Commutative Algebra and Homological Algebra, we start by introducing the concepts of these areas that will appear throughout the  paper.

\subsection{Graph theory}\label{sub:graph}
We  consider simple (without multiple edges or loops) undirected graphs. 
Some well-known families of graphs which will appear in this paper are: the {\em cycle} $C_n$ on $n$ vertices, the {\em complete graph} $K_n$ on $n$ vertices, and the {\em complete bipartite graph} $K_{a,b}$ on $a+b$ vertices, with bipartition sets of sizes $a$ and $b$. 

For a graph $G = (V(G),E(G))$, we denote its {\em complement graph} by $G^c$, that is, the graph with the same vertices, and where two distinct vertices are adjacent if and only if they are not adjacent in $G$.

The {\em degree} $\deg(v)$ of a vertex $v$ is the number of edges incident to $v$, and a graph $G$ is said to be $d$-regular when all its vertices have the same degree $d$. The set of vertices of $G$ which are adjacent to $v$ is denoted by $N(v)$, where the letter $N$ stands for the word {\em neighborhood}. Furthermore, we denote by $N[v]$ the {\em closed neighborhood} of $v$, which equals $N(v)\cup\{v\}$.

Given a subset $S$ of $V(G)$, we say it is {\em independent} if there are no edges in $G$ joining two vertices of $S$, whereas a complete subgraph of $G$ is called a {\em clique}. The {\em induced subgraph} of $G$ on $S$ is the graph whose vertices are the elements of $S$ and whose edges are all the edges of $G$ whose endpoints belong to $S$. When $W$ is an induced subgraph of $G$ we write $W\leq G$, or $W<G$ if it is proper. 

Given $S\subseteq V(G)$, we denote by $G\setminus S$ the induced subgraph of $G$ on $V(G)\setminus S$. Using this notation, we define the {\em vertex connectivity} of a connected graph $G$ as the smallest size $\kappa(G)$ of a subset $S\subseteq V(G)$ such that $G\setminus S$ is disconnected. The vertex connectivity of a graph is always upper bounded by the minimum degree of the graph, and when these two values coincide we say that the graph is {\em $\kappa$-optimal}.

An {\em induced matching of size $m$} of a graph $G$, denoted by $m\cdot K_2$, is an induced subgraph of $G$ consisting of $m$ disjoint edges. The size of the largest induced matching of $G$ is called its {\em induced matching number}.

In this paper we focus on the study of Generalized Andrásfai graphs, which are some specific circulant graphs (Cayley graphs of finite cyclic groups). 

Let $n > 0$, and consider the finite cyclic group $(\mathbb Z_n,+)$ and a subset $S\subseteq \mathbb Z_n$ called the {\em connection set}. The {\em Cayley graph} ${\rm Cay}(\mathbb Z_n,S)$ of $\mathbb Z_n$ with respect to $S$ is the graph whose vertex set is $\mathbb Z_n$ and whose edges are $\{a,\,a + s\}$, where $a\in \mathbb Z_n$ and $s\in S$.

Take $t \geq 1$ and $k \geq 2$ and denote $n := t(k-1)+2$. The {\em Generalized Andrásfai graph} $\GA$ with parameters $t$ and $k$ is the Cayley graph of $(\mathbb Z_{n},\,+)$ with respect to $\{1+j t\, |\, j\in\{0,1,\dots,k-1\}\}$. In other words, for $i, j \in \mathbb Z_{n}$ and $0 \leq i < j < n$, then
\[\{i,j\} \in E(\GA) \Longleftrightarrow\, j - i \equiv 1 \pmod t. \]
These graphs are vertex transitive, $k$-regular and triangle-free, and coincide with the complete graph $K_{k+1}$ when $t=1$, the complete bipartite graph $K_{k,k}$ when $t=2$, and the Andrásfai graph ${\rm And}(k)$ when $t=3$.  

The family of graphs that we study in this work are Generalized Andrásfai graphs from which the exterior cycle (i.e., edges $\{i,i+1\}$ with $i \in \mathbb Z_{n}$) has been removed. This graph will be denoted by $\GAsin$, and is the Cayley graph of $(\mathbb Z_{n},\,+)$ with respect to the set $\{1+jt\,|\,j\in\{1,2,\dots,k-2\}\}$ (see Figure \ref{fig:GA(3,6)}). In other words, 
for $i, j \in \mathbb Z_{n}$ and $0 \leq i < j < n$, then 
\begin{equation}\label{eq:GAsin} \{i,j\} \in E(\GAsin)\ \Longleftrightarrow \ j - i \equiv 1\ ({\rm mod}\ t) \text{ and } j - i \notin \{1,\,n-1\}. \end{equation}

\subsection{Commutative algebra} Let $\mathbb K$ be an arbitrary field. Given a graph $G=(V(G),E(G))$ with $V(G)=\{x_1,\dots,x_n\}$, we consider the polynomial ring $R=\mathbb K[x_1,\dots,x_n]$, where by abuse of notation the variables in the polynomial ring are identified with the vertices of the graph.

The {\em edge ideal} associated to $G$ in $R$ is $I(G)=\langle x_ix_j\, |\, \{x_i,x_j\}\in E(G)\rangle$. From the definition, it is clear that edge ideals are quadratic squarefree monomial ideals. Hence, they are homogeneous ideals and we can consider their minimal graded free resolutions:$$ 0\rightarrow \oplus_j R(-j)^{\beta_{p,j}}\rightarrow \oplus_j R(-j)^{\beta_{p-1,j}}\rightarrow\cdots\rightarrow \oplus_j R(-j)^{\beta_{0,j}}\rightarrow I(G) \rightarrow 0 \, .$$

Some invariants associated to these resolutions that we will study in this paper are:
\begin{itemize}
    \item The {\em graded Betti numbers} $\beta_{i,j}$, where $\beta_{i,j}$ is the number of generators of degree $j$ in the $i$-th free module of the resolution. 

    Graded Betti numbers are usually collected in a table called the {\em Betti diagram} in such a way that the entry in the $i$-th column and the $j$-th row equals $\beta_{i,i+j}$.
    
    For every edge ideal, $\beta_{0,2}=|E(G)|$ and $\beta_{0,j} = 0$ for all $j \neq 2$. Hence, the first column of its Betti diagram has only one nonzero entry. The row of this diagram containing the Betti numbers $\beta_{i,i+2}$ is usually called the {\em linear strand} of $I(G)$. By a result of Katzman \cite[Lemma 2.2]{Kat06}, $\beta_{i,d}=0$ for all $d > 2\,(i+1)$. The diagonal of the Betti diagram  containing the Betti numbers $\beta_{i,\,2(i+1)}$ will be called the {\em main diagonal}.

    \item The {\em projective dimension} ${\rm pd}(I(G))$, defined as ${\rm pd}(I(G))=\max\{i \,|\,\beta_{i,i+j}\neq 0\}$, is the length $p$ of the resolution or, equivalently, the label of the last nonzero column of the Betti diagram.
    \item The {\em (Castelnuovo-Mumford) regularity}, defined as ${\rm reg}(I(G))=\max\{j \,|\,\beta_{i,i+j}\neq 0\}$ or, equivalently, as the label of the last nonzero row of the Betti diagram. Whenever $\beta_{i, i+{\rm reg}(I(G))} \neq 0$ for some $i \in \mathbb N$, we say that the regularity is attained at the $i$-th step of the resolution. 
\end{itemize}

In this work we will study the previous invariants for $I(\GAsin)$. 

\subsection{Simplicial complexes}

% \nacho{Una cosa que me raya o ralla mucho es que a las homologías las solemos llamar $\widetilde{H}_i$ y a los números de Betti los solemos llamar $\beta_{i,i+j}$. Me molestan los subíndices, porque para los números de Betti la $i$ es el paso y la $J$ la fila en el diagrama de Betti y el subíndice de $\widetilde{H}_i$ va más relacionado con la fila del diagrama de Betti. Eso hace que me suene mejor llamar a las homologías $\widetilde{H}_j$. No sé si vale la pena cambiarlo, además hay que tener cuidado porque en la parte del dual de alexander, ahí el subíndice $i$ de $\widetilde{H}_i$ si que va ligado al $i$ de $\beta_{i,i+j}$.}

Simplicial complexes naturally appear when computing Betti numbers using homological tools, so we introduce here their definition, as well as some examples and special subcomplexes.

\begin{defi}\label{def:SimplicialComplex}
    Given a finite set $V$, a {\em simplicial complex} $\Delta$ with {\rm vertex set} $V$ is a family of subsets of $V$ such that: \begin{enumerate}
         \item $\{v\}\in\Delta$ for all $v\in V$.
         \item If $\sigma\in\Delta$ and $\tau\subseteq\sigma$, then $\tau\in\Delta$.
     \end{enumerate}
\end{defi}

The elements of a simplicial complex $\Delta$ are called {\em faces}. %and the maximal faces are called {\em facets}.
Given a subset $W$ of the set of vertices $V$ of $\Delta$, the {\em induced subcomplex} of $\Delta$ on $W$ is defined as the simplicial complex whose faces are the faces of $\Delta$ constituted by elements of $W$. Furthermore, given a face $\sigma$ of $\Delta$, we can consider two induced subcomplexes of $\Delta$ which are called {\em link} and {\em deletion} of the face $\sigma$ respectively:
\begin{itemize}
    \item $\lk_{\Delta}(\sigma):=\{\tau\in\Delta\, : \, \tau\cap\sigma=\emptyset,\, \tau\cup\sigma\in\Delta\}$.

    \item $\del_{\Delta}(\sigma):=\{\tau\in\Delta\,:\, \tau\cap\sigma=\emptyset\}$.
\end{itemize}

In addition, given two simplicial complexes $\Delta_1$ and $\Delta_2$ with disjoint vertex sets, the {\em join} of $\Delta_1$ and $\Delta_2$ is the simplicial complex $\Delta_1*\Delta_2:=\{\sigma\cup\tau\,:\,\sigma\in\Delta_1,\,\tau\in\Delta_2\}$ on $V(\Delta_1)\cup V(\Delta_2)$.

\begin{ex}
For a simple graph $G$, its {\em independence complex} $\Delta(G)$ is the simplicial complex with vertex set $V(G)$, and whose faces are the independent sets of vertices of $G$. For $v \in V(G)$, one has that  $\lk_{\Delta(G)}(v)=\Delta(G\setminus N[v])$ and $\del_{\Delta(G)}(v)=\Delta(G\setminus \{v\})$, where the independence complex of the graph with no vertices is $\{\emptyset\}$ by convention. Moreover, if $G$ and $H$ are two disjoint graphs, then $\Delta(G\sqcup H)=\Delta(G)*\Delta(H)$.
\end{ex}

The story of the connection between simplicial complexes and Commutative Algebra started with Stanley-Reisner theory. Given a simplicial complex $\Delta$, the {\em Stanley-Reisner ideal} associated to it is the ideal $I_{\Delta}$ generated by the monomials $\prod_{i\in W}x_i$, where $W$ is a non-face of $\Delta$. The Stanley-Reisner ideal associated to the independence complex of a graph $G$, $I_{\Delta(G)}$, is just the edge ideal $I(G)$.

\subsection{Homology}
Our main tool for computing Betti numbers of edge ideals is Hochster's formula \cite{Hoc77}, a classical result describing the Betti numbers of a squarefree monomial ideal in terms of the reduced homology groups of certain simplicial complexes. In the case of edge ideals, this formula can be rewritten as follows:

\begin{prop}[\hspace{-0.0001mm}\cite{RVT07}, Proposition 1.2]\label{prop:HochsterRothVanTuyl} Let $G$ be a simple graph with edge ideal $I(G)$. Then, $$\beta_{i,i+j}(I(G))=\sum_{\substack{W\leq G\\|V(W)|=i+j} }\dim_{\mathbb K} \widetilde{H}_{j-2}(\Delta(W))\ \ for\ all\ i,j\geq0\,.$$
\end{prop}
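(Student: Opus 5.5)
The plan is to deduce the statement directly from Hochster's formula \cite{Hoc77}, applied to the independence complex $\Delta(G)$, by keeping track of the homological shift between the ideal and the quotient ring. For an arbitrary simplicial complex $\Delta$ on vertex set $V$, Hochster's formula gives the graded Betti numbers of the Stanley--Reisner ring:
\[
\beta_{i,d}(R/I_\Delta)=\sum_{\substack{W\subseteq V\\ |W|=d}}\dim_{\mathbb K}\widetilde H_{|W|-i-1}(\Delta_W),
\]
where $\Delta_W$ denotes the induced subcomplex on $W$.

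\textbf{Step 1 (shift from $R/I$ to $I$).} A minimal free resolution of $R/I_\Delta$ is obtained from one of $I_\Delta$ by prepending $R$ in homological degree $0$. Hence $\beta_{i,d}(I_\Delta)=\beta_{i+1,d}(R/I_\Delta)$ for all $i\geq 0$, and therefore
\[
\beta_{i,d}(I_\Delta)=\sum_{|W|=d}\dim_{\mathbb K}\widetilde H_{|W|-i-2}(\Delta_W).
\]

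\textbf{Step 2 (specialize to edge ideals).} Take $\Delta=\Delta(G)$. As recalled above, $I_{\Delta(G)}=I(G)$. A subset of $W$ is independent in $G$ if and only if it is independent in the induced subgraph on $W$. So the induced subcomplex $\Delta(G)_W$ equals $\Delta(H)$, where $H$ is the induced subgraph of $G$ on $W$. Subsets $W\subseteq V(G)$ correspond bijectively to induced subgraphs $H\leq G$ with $|V(H)|=|W|$, so the sum may be indexed over induced subgraphs.

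\textbf{Step 3 (reindex).} Set $d=i+j$. Then $|W|-i-2=j-2$, which gives exactly the displayed formula in the statement.

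The only point needing care is the index shift in Step 1, together with the conventions for the empty subgraph. For $W=\emptyset$ we have $\Delta(W)=\{\emptyset\}$, whose only nonzero reduced homology is $\widetilde H_{-1}$. This term would correspond to $i+j=0$ and $j=1$, which is impossible for $i,j\geq 0$, so it never contributes. Similarly, for $i=0$ the formula should give $\beta_{0,2}=|E(G)|$. This is a useful sanity check: an induced subgraph on two vertices has disconnected independence complex, so $\widetilde H_0$ is $\mathbb K$, exactly when the two vertices form an edge. No real obstacle is expected; the argument is bookkeeping.
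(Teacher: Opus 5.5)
Your proposal is correct and follows the same route as the paper, which gives no separate proof and simply cites the statement as Roth--Van Tuyl's rewriting of Hochster's formula for $I_{\Delta(G)}=I(G)$. That is exactly your specialization, using the shift $\beta_{i,d}(I_\Delta)=\beta_{i+1,d}(R/I_\Delta)$ and the identification $\Delta(G)_W=\Delta(W)$; your checks of the case $W=\emptyset$ and of $\beta_{0,2}=|E(G)|$ are also correct.
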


In particular, $\beta_{i,i+j}(I(G))=0$ whenever $i+j>|V(G)|$. 

In general, computing the dimensions of reduced homology groups is a non-trivial task. Nevertheless, under certain conditions, one can reduce this computation to the one of a somehow simpler simplicial complex. A result in this line is the following (see, e.g.,  \cite[Lemma 2.1]{FRG14} and \cite[Lemma 3.2]{Eng}): \begin{lem}
\label{lema:BasicsHomology}
    Let $G$ be a simple graph.
    \begin{itemize}
        \item[(a)] If $G$ has an isolated vertex, then $\Delta(G)$ is acyclic, i.e., $\widetilde{H}_j(\Delta(G))=0$ for all $j\geq -1$.
        \item[(b)] If $G$ has two vertices $u$ and $v$ such that $N(u)=\{v\}$ and $N(v)=\{u\}$, then $\widetilde{H}_j(\Delta(G))\cong \widetilde{H}_{j-1}(\Delta(G\setminus\{u,v\}))$ for all $j \geq 0$. 
        \item[(c)] If $u,v\in V(G)$ satisfy $N(u)\subseteq N(v)$, then $\widetilde{H}_j(\Delta(G))\cong \widetilde{H}_j(\Delta(G\setminus\{v\}))$ for all $j\geq -1$. 
    \end{itemize}
\end{lem}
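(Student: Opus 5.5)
The plan is to argue each item directly on the simplicial complex $\Delta(G)$, using the link--deletion long exact sequence and the join description of independence complexes from the Example above.

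\textbf{(a)} If $v$ is isolated in $G$, then $v$ can be added to any independent set and the result is still independent. Hence $\Delta(G)=\{\emptyset,\{v\}\}*\Delta(G\setminus\{v\})$ is a cone with apex $v$. Cones are contractible, so all reduced homology vanishes, including degree $-1$; note that the cone is never the empty complex $\{\emptyset\}$.

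\textbf{(b)} The hypotheses $N(u)=\{v\}$ and $N(v)=\{u\}$ say that the edge $\{u,v\}$ is a connected component of $G$. So $G=K_2\sqcup (G\setminus\{u,v\})$, and therefore
\[
\Delta(G)=\Delta(K_2)*\Delta(G\setminus\{u,v\}).
\]
Here $\Delta(K_2)$ consists of two points, i.e.\ $S^0$, and joining with $S^0$ is the suspension. By the suspension isomorphism, $\widetilde H_j(\Sigma X)\cong \widetilde H_{j-1}(X)$. This also holds in the degenerate case $X=\{\emptyset\}$: then $\Sigma X=S^0$, and $\widetilde H_0(S^0)\cong\mathbb K\cong\widetilde H_{-1}(\{\emptyset\})$. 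The K\"unneth formula for joins gives the same shift, and either route yields the claim for all $j\ge 0$.

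\textbf{(c)} I would use the exact sequence
\[
\cdots\to \widetilde H_j(\lk_{\Delta}(v))\to \widetilde H_j(\del_{\Delta}(v))\to \widetilde H_j(\Delta)\to \widetilde H_{j-1}(\lk_{\Delta}(v))\to\cdots
\]
This comes from $\Delta=\del_\Delta(v)\cup \mathrm{star}(v)$, with intersection $\lk_\Delta(v)$ and the star being a cone (Mayer--Vietoris). By the Example, $\lk_{\Delta(G)}(v)=\Delta(G\setminus N[v])$.

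\emph{Main step.} I claim $u$ is an isolated vertex of $G\setminus N[v]$, so that part (a) applies to the link.
\begin{itemize}
\item \emph{$u$ survives.} First, $u\ne v$: this is implicit, and otherwise the statement is trivial. Second, $u\notin N(v)$: if $u\in N(v)$, then $v\in N(u)\subseteq N(v)$, which is impossible since there are no loops.
\item \emph{$u$ is isolated there.} Every neighbour of $u$ lies in $N(u)\subseteq N(v)$, and all of $N(v)$ is removed.
\end{itemize}
By part (a), $\widetilde H_j(\lk_{\Delta}(v))=0$ for all $j\ge -1$.

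The long exact sequence then gives $\widetilde H_j(\Delta(G\setminus\{v\}))\cong \widetilde H_j(\Delta(G))$ for all $j\ge -1$. The only thing to check is the bottom end of the sequence, in degree $-1$. Vanishing of the link's $\widetilde H_{-1}$ and $\widetilde H_{-2}=0$ make the map an isomorphism there as well.

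The only real care needed is handling these degenerate conventions, namely the complex $\{\emptyset\}$ and homology in degree $-1$.
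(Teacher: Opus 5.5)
Your proof is correct. The paper does not prove this lemma itself; it only cites \cite[Lemma 2.1]{FRG14} and \cite[Lemma 3.2]{Eng}. Your arguments are the standard ones behind those references:
\begin{itemize}
\item for (a), $\Delta(G)$ is a cone with apex the isolated vertex;
\item for (b), $\Delta(G)$ is the suspension $\Delta(K_2)*\Delta(G\setminus\{u,v\})$;
\item for (c), $\lk_{\Delta(G)}(v)=\Delta(G\setminus N[v])$ is a cone with apex $u$.
\end{itemize}
Part (c) is the usual ``fold'' argument. At the level of spaces it even gives a homotopy equivalence $\Delta(G)\simeq\Delta(G\setminus\{v\})$, obtained by gluing the star of $v$ along a contractible link. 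Your Mayer--Vietoris version gives exactly the homological statement needed.

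Extending the sequence to degree $-1$ via augmented chain complexes is legitimate, since every complex involved contains $\emptyset$. It is not actually needed, though. Both $\Delta(G)$ and $\Delta(G\setminus\{v\})$ contain the vertex $u$, so their $\widetilde{H}_{-1}$ vanish anyway.

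One aside needs correcting: the case $u=v$ is not ``trivial.'' If $u=v$, the hypothesis $N(u)\subseteq N(v)$ holds vacuously, and the conclusion is false in general. For example, take $G=K_2$: then $\widetilde{H}_0(\Delta(K_2))\cong\mathbb K$, while $\Delta(K_2\setminus\{v\})$ is a single point and hence acyclic. So $u\neq v$ must be read as part of the hypothesis. That is also how the paper uses the lemma, for instance in Lemma~\ref{lem:homologiagrado1} and Lemma~\ref{lema:Borrado}.
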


We now present some direct applications of the previous result. The first one concerns vertices of degree $1$.

\begin{lem}\label{lem:homologiagrado1}
        Let $G$ be a graph. If $u,v\in V(G)$ satisfy $N(u) = \{v\}$, then $\widetilde{H}_j(\Delta(G))\cong \widetilde{H}_{j-1}(\Delta(G\setminus N[v]))$ for all $j\geq 0$. 
\end{lem}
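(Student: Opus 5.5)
The plan is to apply Lemma \ref{lema:BasicsHomology}(c) repeatedly to delete all the other neighbours of $v$, and then apply part (b) to the resulting isolated edge $\{u,v\}$.

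\emph{Step 1: removing the other neighbours of $v$.} Since $N(u)=\{v\}$, every $w\in N(v)\setminus\{u\}$ satisfies $N(u)=\{v\}\subseteq N(w)$, because $v$ is adjacent to $w$. Part (c) then gives $\widetilde{H}_j(\Delta(G))\cong \widetilde{H}_j(\Delta(G\setminus\{w\}))$ for all $j\geq -1$. The hypothesis survives this deletion: in $G\setminus\{w\}$ we still have $N(u)=\{v\}$, since $w\neq v$. We also have $N(w')\supseteq\{v\}$ for every remaining $w'\in N(v)\setminus\{u,w\}$. So we can iterate over all elements of $N(v)\setminus\{u\}$ and obtain
\[
\widetilde{H}_j(\Delta(G))\cong \widetilde{H}_j(\Delta(G'))\quad\text{for all } j\geq -1,
\]
where $G' := G\setminus (N(v)\setminus\{u\})$.

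\emph{Step 2: splitting off the edge $\{u,v\}$.} In $G'$ we have $N(u)=\{v\}$ and $N(v)=\{u\}$. Part (b) therefore gives $\widetilde{H}_j(\Delta(G'))\cong \widetilde{H}_{j-1}(\Delta(G'\setminus\{u,v\}))$ for all $j\geq 0$. Finally,
\[
G'\setminus\{u,v\}=G\setminus\big((N(v)\setminus\{u\})\cup\{u,v\}\big)=G\setminus N[v],
\]
since $u\in N(v)$. Combining the two isomorphisms proves the statement.

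\emph{Main point to check.} The only delicate step is verifying that the inclusion hypothesis of part (c) still holds after each deletion. It does, because the deletions never touch $u$ or $v$. The edge case $G\setminus N[v]=\emptyset$ is consistent with the convention $\Delta(\emptyset)=\{\emptyset\}$ used in part (b).
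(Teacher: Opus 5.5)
Your proof is correct and follows the paper's argument: you delete each $w\in N(v)\setminus\{u\}$ via Lemma~\ref{lema:BasicsHomology}(c) using $N(u)=\{v\}\subseteq N(w)$, then apply part (b) to the now-isolated edge $\{u,v\}$ and use $N[v]=(N(v)\setminus\{u\})\cup\{u,v\}$. Your explicit check that the hypothesis of (c) still holds after each deletion is a detail the paper leaves implicit.
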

\begin{proof} Let $T := N(v) \setminus \{u\}$. For every $w \in T$ we have that $N(u) \subseteq N(w)$. Hence, by Lemma \ref{lema:BasicsHomology}.(c), we have that $\widetilde{H}_j(\Delta(G))\cong \widetilde{H}_j(\Delta(G\setminus\{w\}))$ for all $j\geq -1$. Repeating this argument for all vertices in $T$ we get that $\widetilde{H}_j(\Delta(G))\cong \widetilde{H}_j(\Delta(G\setminus T))$ for all $j\geq -1$. Furthermore, in $G\setminus T$ the edge $\{u,v\}$ is isolated; hence, by Lemma  \ref{lema:BasicsHomology}.(b) and taking into account that $N[v]= T \cup \{u,v\}$, we conclude that 
$\widetilde{H}_j(\Delta(G))\cong \widetilde{H}_{j-1}(\Delta(G \setminus N[v]))$ for all $j \geq 0$.
\end{proof}

Complete bipartite graphs $K_{1,r}$ with $r \geq 1$ are usually called {\em star graphs}. Note that star graphs coincide with triangle-free graphs with a vertex $x\in V(G)$ such that $N[x]=V(G)$. 

\begin{lem}\label{lema:HomologiaStarGraph}
$\widetilde{H}_j(\Delta(K_{1,r}))\cong\begin{cases}
        \mathbb K & if \ j=0,\  and\\
        0 & otherwise.
    \end{cases}$
\end{lem}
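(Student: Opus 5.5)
The plan is to apply Lemma \ref{lem:homologiagrado1} to reduce the computation to the independence complex of the empty graph. Let $x$ be the center of $K_{1,r}$ and let $u$ be any leaf; such a leaf exists since $r\geq 1$. Then $N(u)=\{x\}$, so Lemma \ref{lem:homologiagrado1} applies with $v=x$ and gives
\[
\widetilde{H}_j(\Delta(K_{1,r}))\cong \widetilde{H}_{j-1}(\Delta(K_{1,r}\setminus N[x]))\quad\text{for all } j\geq 0.
\]
Since $N[x]=V(K_{1,r})$, the graph $K_{1,r}\setminus N[x]$ has no vertices. By the convention recalled in the Example above, its independence complex is $\{\emptyset\}$.

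Next we compute the reduced homology of the complex $\{\emptyset\}$. Its augmented chain complex consists only of $\mathbb K$ in degree $-1$. Hence $\widetilde{H}_{-1}(\{\emptyset\})\cong\mathbb K$ and all other reduced homology groups vanish. Substituting into the isomorphism above, $\widetilde{H}_j(\Delta(K_{1,r}))\cong\mathbb K$ exactly when $j-1=-1$, that is $j=0$, and it is $0$ for every $j\geq 1$.

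It remains to handle $j=-1$, which Lemma \ref{lem:homologiagrado1} does not cover. The group $\widetilde{H}_{-1}(\Delta)$ is nonzero only when $\Delta=\{\emptyset\}$. Here $\Delta(K_{1,r})$ has at least one vertex, so this group vanishes. Equivalently, the augmentation map from $0$-chains to $\mathbb K$ is surjective.

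As a sanity check, one can argue directly: $\Delta(K_{1,r})$ is the disjoint union of the isolated point $\{x\}$ and the full simplex on the $r$ leaves. These are two contractible components, so the only nonzero reduced homology is $\widetilde{H}_0\cong\mathbb K$. There is no real obstacle in this argument; the only point needing care is the $j=-1$ case and the convention for the empty graph.
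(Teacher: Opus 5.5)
Your argument is correct: the reduction through Lemma~\ref{lem:homologiagrado1} to $\widetilde{H}_{-1}(\{\emptyset\})\cong\mathbb K$, your separate treatment of $j=-1$, and your direct check via $\{x\}\sqcup(\text{simplex on the leaves})$ all work. The paper gives no explicit proof and presents this lemma as a direct application of Lemmas~\ref{lema:BasicsHomology} and~\ref{lem:homologiagrado1}; its preceding remark that stars are the triangle-free graphs with a vertex $x$ satisfying $N[x]=V(G)$ points to exactly your reduction.
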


\begin{lem}\label{lema:HomologiaPalitos}
    $\widetilde{H}_j(\Delta(m\cdot K_2))\cong\begin{cases}
        \mathbb K &  if \ j=m-1,\ and\\
        0 & otherwise.
    \end{cases}$
\end{lem}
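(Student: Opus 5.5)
The natural route is induction on $m$, peeling off one edge at a time with Lemma~\ref{lema:BasicsHomology}(b). Lemma~\ref{lema:HomologiaStarGraph} handles the base case.

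\emph{Base case $m=1$.} The graph $K_2$ equals $K_{1,1}$, a star graph. Lemma~\ref{lema:HomologiaStarGraph} gives $\widetilde{H}_0(\Delta(K_2))\cong\mathbb K$ and zero homology in every other degree. This is also easy to see directly: $\Delta(K_2)$ consists of two isolated points, so its reduced homology is $\mathbb K$ in degree $0$ and vanishes elsewhere, including degree $-1$ since the complex is nonempty.

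\emph{Inductive step, $m\geq 2$.} Choose one edge $\{u,v\}$ of $m\cdot K_2$. Since the edges are disjoint and the subgraph is induced, $N(u)=\{v\}$ and $N(v)=\{u\}$. Lemma~\ref{lema:BasicsHomology}(b) then gives
\[
\widetilde{H}_j(\Delta(m\cdot K_2))\cong\widetilde{H}_{j-1}(\Delta((m-1)\cdot K_2))\quad\text{for all } j\geq 0,
\]
because removing $u$ and $v$ leaves exactly $(m-1)\cdot K_2$. By the induction hypothesis the right-hand side is $\mathbb K$ exactly when $j-1=m-2$, that is $j=m-1$, and $0$ otherwise.

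\emph{Degree $j=-1$.} This degree is not covered by Lemma~\ref{lema:BasicsHomology}(b) and must be checked separately. Reduced homology in degree $-1$ is nonzero only for the complex $\{\emptyset\}$, and $\Delta(m\cdot K_2)$ has vertices, so $\widetilde{H}_{-1}=0$. This is consistent with the claim since $m-1\geq 1\neq -1$.

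An alternative argument would write $\Delta(m\cdot K_2)$ as the $m$-fold join of $S^0$, which is homeomorphic to $S^{m-1}$. The induction above, however, stays within the lemmas already stated. The only point needing care is this boundary degree $j=-1$; otherwise the proof is a straightforward index shift.
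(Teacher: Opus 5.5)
Your proof is correct: the base case, the index shift via Lemma~\ref{lema:BasicsHomology}(b) for $j\geq 0$, and the separate check of degree $-1$ are all sound. The paper states this lemma without proof and presents it as a direct application of Lemma~\ref{lema:BasicsHomology}; your induction, which peels off one isolated edge at a time, is essentially that intended argument.
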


Another very useful homological result is Mayer-Vietoris sequence, which was expressed in its modern form in \cite{ES52}.  
A special case of this sequence is the following (see, e.g., \cite{FRG14}):

\begin{thm}\label{thm:MayerVietoris}
    Let $\Delta$ be a simplicial complex, and let $v\in V(\Delta)$ such that $\lk_{\Delta}(v)\neq\{\emptyset\}$. Then there is a long exact sequence \begin{eqnarray*}
    \cdots\rightarrow\widetilde{H}_j(\lk_{\Delta}(v))\rightarrow\widetilde{H}_j(\del_{\Delta}(v))\rightarrow\widetilde{H}_j(\Delta)\rightarrow\widetilde{H}_{j-1}(\lk_{\Delta}(v))\rightarrow\cdots\\
    \cdots \rightarrow \widetilde{H}_0(\lk_{\Delta}(v))\rightarrow\widetilde{H}_0(\del_{\Delta}(v))\rightarrow\widetilde{H}_0(\Delta)\rightarrow 0\,.
   \end{eqnarray*}
\end{thm}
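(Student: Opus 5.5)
The plan is to realize $\Delta$ as a union of two subcomplexes whose intersection is the link, and then apply the classical Mayer--Vietoris sequence in reduced homology. Write $\mathrm{star}_\Delta(v):=\{\tau\in\Delta\,:\,\tau\cup\{v\}\in\Delta\}$ for the closed star of $v$. Every face of $\Delta$ either avoids $v$, so lies in $\del_{\Delta}(v)$, or contains $v$, so lies in $\mathrm{star}_\Delta(v)$. Hence
\[\Delta=\del_{\Delta}(v)\cup \mathrm{star}_\Delta(v).\]
A face lies in both subcomplexes exactly when it avoids $v$ and its union with $v$ is a face. This gives
\[\del_{\Delta}(v)\cap\mathrm{star}_\Delta(v)=\lk_{\Delta}(v).\]

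Next I would write down the Mayer--Vietoris short exact sequence of simplicial chain complexes
\[0\to C_\bullet(\lk_\Delta(v))\to C_\bullet(\del_\Delta(v))\oplus C_\bullet(\mathrm{star}_\Delta(v))\to C_\bullet(\Delta)\to 0.\]
I take augmented chain complexes, so that each has $\mathbb K$ in degree $-1$ spanned by $\emptyset$. The first map is $c\mapsto(c,-c)$ and the second is $(a,b)\mapsto a+b$. Exactness in degrees $\geq 0$ is the usual check, since the faces of $\Delta$ are exactly the union of the faces of the two pieces. In degree $-1$ the sequence reads $0\to\mathbb K\to\mathbb K^2\to\mathbb K\to0$, which is exact. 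This is where the hypothesis $\lk_\Delta(v)\neq\{\emptyset\}$ matters: it guarantees that the intersection is a genuine nonempty complex and that the reduced (augmented) version of the sequence is the correct one. The associated long exact sequence in homology then runs through all $\widetilde H_j$ and ends with $\widetilde H_{-1}$ terms.

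Finally, $\mathrm{star}_\Delta(v)$ is a cone with apex $v$, because $\tau\in\mathrm{star}_\Delta(v)$ implies $\tau\cup\{v\}\in\mathrm{star}_\Delta(v)$. Hence it is acyclic: $\widetilde H_j(\mathrm{star}_\Delta(v))=0$ for all $j\geq -1$. This can be seen by the contracting homotopy $\tau\mapsto \pm(\tau\cup\{v\})$, or by Lemma \ref{lema:BasicsHomology}(a) in the flag case. Substituting these zeros turns the middle terms into $\widetilde H_j(\del_\Delta(v))$, which yields the displayed sequence. The tail becomes
\[\widetilde H_0(\lk)\to\widetilde H_0(\del)\to\widetilde H_0(\Delta)\to\widetilde H_{-1}(\lk)=0.\]
The last group vanishes precisely because the link is nonempty, and this gives the final $\to 0$.

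The only delicate point I expect is this bookkeeping at the bottom of the sequence (degrees $0$ and $-1$): one must check that the augmented sequence is exact there and that $\widetilde H_{-1}(\lk_\Delta(v))=0$. Both facts follow from the hypothesis $\lk_\Delta(v)\neq\{\emptyset\}$. Everything else is the standard Mayer--Vietoris argument.
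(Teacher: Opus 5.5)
Your argument is correct and is exactly the standard derivation the paper has in mind when it cites this as a special case of the Mayer--Vietoris sequence: write $\Delta=\del_\Delta(v)\cup\mathrm{star}_\Delta(v)$, whose intersection is $\lk_\Delta(v)$, and use that the star is a cone and hence acyclic. One small precision: the augmented short exact sequence is exact in degree $-1$ even when $\lk_\Delta(v)=\{\emptyset\}$, so the hypothesis $\lk_\Delta(v)\neq\{\emptyset\}$ is used only to get $\widetilde{H}_{-1}(\lk_\Delta(v))=0$ and hence the final $\to 0$, as you correctly note at the end.
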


A direct consequence of this Mayer-Vietoris sequence is the following:

\begin{cor}\label{cor:TruquitoMayerVietoris}
Let $\Delta$ be a simplicial complex, $v\in V(\Delta)$ such that ${\rm link}_{\Delta}(v)\neq\{\emptyset\}$ and $r\in\mathbb N$. Then:
\begin{itemize}
    \item If $\widetilde{H}_j({\rm link}_{\Delta}(v))=0$ for all $j\geq r$, then \[\widetilde{H}_j(\Delta)\cong\widetilde{H}_j({\rm del}_{\Delta}(v)) \ \text{for all }  j\geq r+1\,.\]
    \item If $\widetilde{H}_j({\rm del}_{\Delta}(v))=0$ for all $j\geq r$, then \[\widetilde{H}_j(\Delta)\cong\widetilde{H}_{j-1}({\rm link}_{\Delta}(v)) \ \text{for all }  j\geq r+1\,.\]
\end{itemize}
\end{cor}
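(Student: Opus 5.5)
This follows from the long exact sequence of Theorem \ref{thm:MayerVietoris}, using vanishing to force the outer maps to be zero. Fix $r\in\mathbb N$ and assume throughout, as the statement does, that $\lk_{\Delta}(v)\neq\{\emptyset\}$, so that the theorem applies. For each $j\geq 1$ we have the segment
\[
\widetilde{H}_j(\lk_{\Delta}(v))\rightarrow\widetilde{H}_j(\del_{\Delta}(v))\rightarrow\widetilde{H}_j(\Delta)\rightarrow\widetilde{H}_{j-1}(\lk_{\Delta}(v))\rightarrow\widetilde{H}_{j-1}(\del_{\Delta}(v)),
\]
which is exact at every interior term.

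For the first item, suppose $\widetilde{H}_j(\lk_{\Delta}(v))=0$ for all $j\geq r$. Take any $j\geq r+1$; then both $j$ and $j-1$ are at least $r$. So the two link terms flanking $\widetilde{H}_j(\del_{\Delta}(v))\rightarrow\widetilde{H}_j(\Delta)$ vanish. Exactness at $\widetilde{H}_j(\del_{\Delta}(v))$ shows the map into $\widetilde{H}_j(\Delta)$ is injective, since its kernel is the image of $\widetilde{H}_j(\lk_{\Delta}(v))=0$. Exactness at $\widetilde{H}_j(\Delta)$ shows it is surjective, since the next map lands in $\widetilde{H}_{j-1}(\lk_{\Delta}(v))=0$. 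Hence $\widetilde{H}_j(\Delta)\cong\widetilde{H}_j(\del_{\Delta}(v))$.

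For the second item, suppose $\widetilde{H}_j(\del_{\Delta}(v))=0$ for all $j\geq r$, and again take $j\geq r+1$. Now $\widetilde{H}_j(\del_{\Delta}(v))=0$ and $\widetilde{H}_{j-1}(\del_{\Delta}(v))=0$ because $j-1\geq r$. Then the connecting map $\widetilde{H}_j(\Delta)\rightarrow\widetilde{H}_{j-1}(\lk_{\Delta}(v))$ is injective: its kernel is the image of $\widetilde{H}_j(\del_{\Delta}(v))=0$. It is surjective because the following map goes to $\widetilde{H}_{j-1}(\del_{\Delta}(v))=0$. This gives $\widetilde{H}_j(\Delta)\cong\widetilde{H}_{j-1}(\lk_{\Delta}(v))$.

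The only point needing care is indexing. Because $j\geq r+1\geq 1$, every term used sits in the part of the sequence displayed in Theorem \ref{thm:MayerVietoris}, with indices at least $0$. The condition $j\geq r+1$, rather than $j\geq r$, is exactly what makes the index $j-1$ fall in the vanishing range. Beyond this bookkeeping, the argument is a direct diagram chase.
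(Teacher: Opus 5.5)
Your proof is correct and is exactly the argument the paper intends: the paper gives no written proof, only calling the corollary a direct consequence of the Mayer--Vietoris sequence in Theorem \ref{thm:MayerVietoris}. Your exactness argument supplies that omitted verification, using that the flanking terms vanish at indices $j$ and $j-1\geq r$, and that $j\geq r+1\geq 1$ keeps every term inside the displayed sequence.
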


Now we have all the necessary ingredients to study the edge ideals associated to $\GAsin$. First, note that the graphs  ${\rm GA}(t,2)'$ consist of isolated vertices. Also note that ${\rm GA}(t,3)' = (t+1) \cdot K_2$, and hence its Betti diagram is diagonal and $\beta_{i,2i+2} = \binom{t+1}{i+1}$ for $0 \leq i \leq t$ (see Figure \ref{fig:bettik3}). \begin{figure}
\[
\begin{array}{c|ccccccc}
& 0 &   \cdots & i & \cdots  & t \\
\hline 
2 & t+1 \\
\vdots & & \ddots \\
i+2 & & &  \binom{t+1}{i+1}\\
\vdots & & & & \ddots\\
t+2 & & & & &  1
 \end{array}
\]
\caption{Betti table of $I({\rm GA}(t,3)') = I((t+1) \cdot K_2)$} \label{fig:bettik3} 
\end{figure}
Whenever $k \geq 4$, $\GAsin$ is a connected $(k-2)$-regular graph.

\section{Linear strand}

In 2007, Roth and Van Tuyl obtained the following combinatorial formula for the Betti numbers in the linear strand of  edge ideals.

\begin{prop}[\hspace{-0.0001mm}\cite{RVT07}, Proposition 2.1]\label{prop:LinearStrand} Let $G$ be a simple graph. Then \[
\beta_{i,i+2}(I(G))=\sum_{\substack{W\leq G\\ |V(W)|=i+2}}(\#{\rm comp}(W^c)-1) \ for\ all\ i\geq 0\,,
\] where $\#{\rm comp}(W^c)$ denotes the number of connected components of $W^c$. 
\end{prop}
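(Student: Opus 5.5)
The plan is to derive the formula from Hochster's formula as stated in Proposition~\ref{prop:HochsterRothVanTuyl}, taking $j=2$. That gives
\[
\beta_{i,i+2}(I(G))=\sum_{\substack{W\leq G\\ |V(W)|=i+2}}\dim_{\mathbb K}\widetilde{H}_0(\Delta(W)).
\]
So it suffices to prove, for every induced subgraph $W$ with $|V(W)|=i+2\geq 2$, that
\[
\dim_{\mathbb K}\widetilde{H}_0(\Delta(W))=\#{\rm comp}(W^c)-1.
\]

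Recall that $\widetilde{H}_0$ of a nonempty simplicial complex has dimension equal to the number of connected components of the complex minus one. Here $\Delta(W)$ has vertex set $V(W)\neq\emptyset$, so we only need to count the connected components of $\Delta(W)$.

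Connectivity of a simplicial complex depends only on its $1$-skeleton. The vertices of $\Delta(W)$ are the vertices of $W$. Its edges are the independent $2$-subsets of $V(W)$, that is, the pairs $\{u,v\}$ that are not edges of $W$. These are exactly the edges of $W^c$. Hence the $1$-skeleton of $\Delta(W)$ is precisely the graph $W^c$. Since every higher face is a set of vertices already joined pairwise in this skeleton, it adds no new connections. Therefore $\#{\rm comp}(\Delta(W))=\#{\rm comp}(W^c)$, and substituting into the sum proves the claim.

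I do not expect a real obstacle. The only points needing care are the edge conventions. Because $|V(W)|=i+2\geq 2$, the complex $\Delta(W)$ is never $\{\emptyset\}$, so $\widetilde{H}_0$ is correctly computed as (number of components) $-1$. Summands with $W^c$ connected contribute $0$, which matches the formula.
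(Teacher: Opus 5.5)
Your proof is correct. The paper gives no proof of this statement (it is quoted from Roth and Van Tuyl). Your argument---Hochster's formula in the form of Proposition~\ref{prop:HochsterRothVanTuyl} with $j=2$, combined with the fact that the $1$-skeleton of $\Delta(W)$ is exactly $W^c$, so that $\dim_{\mathbb K}\widetilde{H}_0(\Delta(W))=\#{\rm comp}(W^c)-1$ whenever $|V(W)|\geq 2$---is the standard derivation of it.
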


In this section, we will rewrite this formula for the family of triangle-free graphs and use it for computing the Betti numbers in the linear strand of $I(\GAsin)$.

\begin{lem}\label{lema:LinearStrandTriangleFree}
    Let $G$ be a graph without triangles. Then, for all $i\geq 0$ \[\beta_{i,i+2}(I(G))=\sum_{\substack{a+b=i+2\\ 0<a\leq b}}\#\{W\leq G\,|\, W \ is\ isomorphic \ to \ K_{a,b}\}\, .\]
\end{lem}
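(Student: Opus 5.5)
We start from Proposition \ref{prop:LinearStrand}, which gives $\beta_{i,i+2}(I(G))$ as the sum of $\#{\rm comp}(W^c)-1$ over all induced subgraphs $W\leq G$ on $i+2$ vertices. The plan is to show that, when $G$ is triangle-free, this summand is $1$ when $W$ is a complete bipartite graph $K_{a,b}$ with $0<a\leq b$, and $0$ otherwise. Regrouping the sum by the isomorphism type of $W$ then gives the stated formula.

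The key step is to understand when $W^c$ is disconnected for a triangle-free $W$ on $m=i+2\geq 2$ vertices. Suppose $W^c$ has connected components with vertex sets $C_1,\dots,C_s$. Two vertices lying in different components are non-adjacent in $W^c$, so they are adjacent in $W$. We then argue as follows.

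\begin{itemize}
\item If $s\geq 3$, choosing one vertex in each of $C_1,C_2,C_3$ yields a triangle in $W$, which is impossible. Hence $s\leq 2$.
\item If $s=2$, then every vertex of $C_1$ is adjacent in $W$ to every vertex of $C_2$.
\item Each $C_i$ must be independent in $W$. Indeed, an edge $\{x,y\}$ of $W$ inside $C_1$, together with any vertex $z\in C_2$, would form a triangle.
\end{itemize}

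Therefore $W$ is exactly the complete bipartite graph with parts $C_1$ and $C_2$, so $W\cong K_{a,b}$ with $a=\min(|C_1|,|C_2|)>0$ and $a+b=i+2$. In this case $\#{\rm comp}(W^c)-1=1$.

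Conversely, suppose $W\cong K_{a,b}$ with $a,b\geq 1$. Then $W^c$ is the disjoint union of the complete graphs $K_a$ and $K_b$, so it has exactly two components and contributes $1$. Every other induced $W$ has $W^c$ connected and contributes $0$. The condition $m\geq 2$ is automatic since $i\geq 0$, and with it the empty graph is never counted.

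Finally, we sum over $W$. Each induced subgraph isomorphic to some $K_{a,b}$ with $a+b=i+2$ is counted exactly once, and the ordering $a\leq b$ avoids double counting, since $K_{a,b}\cong K_{b,a}$. This gives exactly the displayed formula.

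The only mildly delicate point is the exclusion of three or more components, together with the independence of the parts; both come directly from triangle-freeness. No real obstacle is expected.
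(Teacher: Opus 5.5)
Your proposal is correct and follows the paper's proof essentially step for step. You start from the Roth--Van Tuyl formula and use triangle-freeness to rule out three or more components of $W^c$ and to force each of the two components to be a clique in $W^c$, i.e.\ an independent part of $W$. You then identify such $W$ with complete bipartite graphs. Your added remark on why the ordering $a\leq b$ gives no double counting is a small but welcome clarification.
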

\begin{proof}
    Taking into account Proposition \ref{prop:LinearStrand}, let us analyze the possible values of $\#{\rm comp}(W^c)$, where $W$ is an arbitrary induced subgraph of $G$ of size $i+2$.

    Assume first that there exists such an induced subgraph $W\leq G$ with $\#{\rm comp}(W^c)>2$. In this case, there exist at least three connected components $\mathcal C_1$, $\mathcal C_2$ and $\mathcal C_3$ in $W^c$, and a vertex $x_i$ in each $\mathcal C_i$ for $i\in\{1,2,3\}$. Then, the induced subgraph of ${W^c}$ on ${\{x_1,x_2,x_3\}}$ consists of three isolated vertices, and hence the induced subgraph of $W$ on this set is a triangle, which contradicts the assumption that $G$ is triangle-free. Consequently \[\beta_{i,i+2}(I(G))=\sum_{\substack{W\leq G\\ |V(W)|=i+2}}(\#{\rm comp}(W^c)-1)=\#\{ W\leq G \,|\,|V(W)|=i+2\ {\rm and}\ \#{\rm comp}(W^c)=2\}\,.\]

    Furthermore, when $W^c$ has exactly two connected components $\mathcal{C}_1$ and $\mathcal{C}_2$, both of them have to be cliques. Otherwise, if $\mathcal{C}_1$ is not a clique, there exist two vertices $x,y\in\mathcal{C}_1$ not connected by an edge, and these two vertices, together with a vertex in $\mathcal{C}_2$, constitute an independent set of three vertices in $G^c$ that leads to a triangle in $G$, reaching again a contradiction.

    To finish, just notice that an induced subgraph of $G^c$ with two complete connected components leads to an induced subgraph of $G$ which is complete bipartite, and vice versa.
\end{proof}

Let us denote by $k_{a,b}(G)$ (or simply by $k_{a,b}$ when $G$ is clear) the quantity $k_{a,b}:=\#\{W\leq G\ | \ W \cong  K_{a,b}\}$ appearing in Lemma \ref{lema:LinearStrandTriangleFree}. In the following lemma we provide formulas for $k_{a,b}(G)$ when $G$ is a Generalized Andrásfai graph from which the exterior cycle has been removed (see Figure \ref{fig:BipartitoCompletoAnd(6)} for an example of a complete bipartite induced subgraph of ${\rm And}(6)'$). From this lemma we derive in  Proposition~\ref{prop:LinearStrandAndrasfaiSinCiclo} explicit formulas for the values $\beta_{i,i+2}(I(\GAsin))$, the Betti numbers in the linear strand of the edge ideals of these graphs. 
     \begin{figure}[!ht]
 \centering
 	\includegraphics[width=6cm]{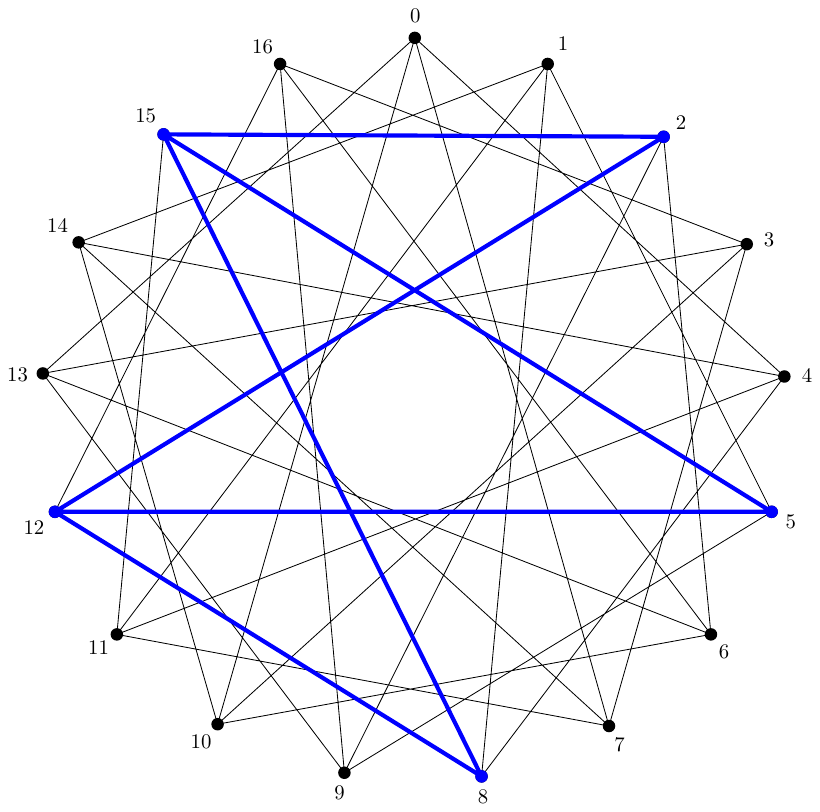}
 	 	\caption{A $K_{2,3}$ induced subgraph of ${\rm And}(6)'$} \label{fig:BipartitoCompletoAnd(6)}
 \end{figure}

\begin{lem}\label{lema:BipartitosAndrasfaiSinCiclo}
Let $t,k \geq 3$ and $a,b\in \mathbb N$ with $0<a\leq b$. Then, \[k_{a,b}(\GAsin)=\begin{cases}
    n\, {k-2\choose a+b-1} &  if\ \  a\neq b , \ and \\
    \frac{n}{2}\, {k-2\choose a+b-1} & if\ \  a= b .
\end{cases}
\]    
\end{lem}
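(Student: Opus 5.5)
The plan is to prove the formula by a double-counting bijection. Write $n=t(k-1)+2$ and work with the Cayley description of $\GAsin$ with connection set $\pm\{1+jt \mid 1\le j\le k-2\}$. Consider pairs $(W,S)$, where $W\le\GAsin$ is an induced subgraph isomorphic to $K_{a,b}$ and $S$ is the side of $W$ of size $a$. (When $a=b$ both sides qualify, so each such $W$ gives two pairs; when $a\neq b$ it gives exactly one.) So it suffices to show that the number of such pairs is $n\binom{k-2}{a+b-1}$.

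For this I would introduce the following standard configuration. Fix a vertex $v\in\mathbb Z_n$ and integers
\[
0=m_1<m_2<\dots<m_a<j_1<j_2<\dots<j_b\le k-2 ,
\]
and set $S=\{v+m_it\}$, $T=\{v+1+j_lt\}$. The sequence $(m_2,\dots,m_a,j_1,\dots,j_b)$ is an arbitrary $(a+b-1)$-subset of $\{1,\dots,k-2\}$. Using \eqref{eq:GAsin}, one checks directly that $S\cup T$ induces $K_{a,b}$ with sides $S$ and $T$:
\begin{itemize}
\item $(v+1+jt)-(v+mt)=(j-m)t+1$ is an allowed difference exactly when $1\le j-m\le k-2$, which holds here;
\item differences inside $S$ are multiples of $t$ of absolute value less than $n-1$;
\item differences inside $T$ are multiples of $t$ as well, and $-(\text{multiple of }t)\not\equiv \pm1 \pmod t$ for $t\ge3$.
\end{itemize}
Hence $t\geq 3$ gives the independence of both sides. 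This yields a map from (anchor $v$, $(a+b-1)$-subset) to pairs $(W,S)$.

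Injectivity should be easy. All indices lie in $[0,k-2]$, so $S\cup T\subseteq\{v,v+1,\dots,v+(k-2)t+1\}$, an arc of length $(k-2)t+2<n$. Therefore $v$ is recovered as the first vertex of $S$ along the unique long gap of the cyclic arrangement of $W$, and the indices are then read off from $W$.

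The main obstacle is surjectivity: every induced $K_{a,b}$ with a distinguished side $S$ must be of this standard form for a suitable choice of $v\in S$. I would argue as follows.
\begin{enumerate}
\item By vertex-transitivity, pick $v\in S$ and translate so that $v=0$. Then the other side $B$ lies in $N(0)=\{1+jt \mid 1\le j\le k-2\}$.
\item Each $s\in S\setminus\{0\}$ is adjacent to every element of $B$ and not to $0$. Writing $s$ in the form $mt$ or $2+mt$ (the common neighbours of elements $1+jt$ have one of these two forms) and using that $n\equiv 2\pmod t$, I would show that $s=mt$ with $m<j$ for every $1+jt\in B$, or else $s=2+mt$ with $m+j\ge k-1$.
\item The second alternative corresponds to the same configuration viewed from a different anchor: applying the automorphism $x\mapsto -x+1$ together with a rotation reduces it to the first alternative. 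So I would choose $v$ to be the element of $S$ that is extremal in the cyclic order, namely the first one after the unique gap of length greater than $(k-2)t+1$. With this choice only the first alternative occurs, and consequently $S=\{m_it\}$ with all $m_i$ smaller than all $j_l$.
\end{enumerate}

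Carrying out this residue bookkeeping cleanly, including the wrap-around cases, is where I expect the real work to lie. As a sanity check, for $a=b=1$ the map gives $n(k-2)$ ordered edges, hence $\frac{n}{2}(k-2)=|E(\GAsin)|$ edges, as it should.

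Finally, the counting. The number of pairs $(W,S)$ is $n\binom{k-2}{a+b-1}$. When $a\neq b$ this equals $k_{a,b}$. When $a=b$ each subgraph is counted twice, once for each side, which gives $\frac n2\binom{k-2}{a+b-1}$.
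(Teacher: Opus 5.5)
Your framework is the same as the paper's. You parametrize an induced $K_{a,b}$, together with a chosen side of size $a$, by an anchor in that side and an $(a+b-1)$-subset of $\{1,\dots,k-2\}$, and halve when $a=b$. The construction direction and the final count are fine. The gap is in surjectivity, the step you flag as the real work, and the concrete claims you make there are wrong.

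In step 2, take integer representatives in $[0,n)$ and $t\ge 3$. Then $mt$ is adjacent to $1+jt$ if and only if $m<j$, and $2+mt$ is adjacent to $1+jt$ if and only if $m>j$. Indeed, when $m\le j$ the difference $(1+jt)-(2+mt)$ is either $-1$ (a removed exterior edge) or congruent to $-1\not\equiv 1 \pmod t$. Your condition $m+j\ge k-1$ is not equivalent. For $t=3$, $k=5$, $n=14$, the vertex $8=2+2t$ satisfies it with $j=2$, yet $8$ is not adjacent to $7=1+2t$. With your condition you cannot rule out vertices of $S$ lying strictly between two vertices of $B$, and that exclusion is exactly what step 3 needs.

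Step 3 and your injectivity argument also identify the anchor incorrectly. No gap of $W$ has length greater than $(k-2)t+1$, since an edge $\{x,x+1+jt\}$ of $W$ cuts the cycle into two arcs of at most that length. Nor does the longest gap have to precede the anchor. For $t=3$, $k=8$, $n=23$, the standard configuration with $v=0$, $S=\{0,12\}$, $T=\{16,19\}$ has its longest gap between $0$ and $12$, and $12$ is not a valid anchor.

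The missing idea, which is the core of the paper's proof, is the block structure:
\begin{itemize}
\item Put $0\in S$. Then $B\subseteq N(0)=\{1+jt\mid 1\le j\le k-2\}$.
\item By the correct adjacency rule, every $s\in S$ is either $mt$ with $m<\min j$ or $2+mt$ with $m>\max j$.
\item Hence no vertex of $S$ lies between two vertices of $B$. This is the paper's claim $x_i<y_1<\dots<y_b<x_{i+1}$, and it is where $t\ge 3$ is used.
\item So, cyclically, $W$ is one block of $S$ followed by one block of $B$.
\end{itemize}
The correct anchor is the first vertex of $S$ after the $B$-block, equivalently the unique vertex of $S$ whose cyclic predecessor in $W$ lies in $B$. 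Translating it to $0$ turns the vertices $\equiv 2 \pmod t$ into multiples of $t$ (since $n\equiv 2\pmod t$) and yields the standard form. This argument gives surjectivity and injectivity at once.
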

\begin{proof}

    Let us consider $a,b\in\mathbb N$ with $0<a\leq b$ and compute the number of induced subgraphs of $\GAsin$ isomorphic to $K_{a,b}$.

    For every $x \in V(\GAsin)$ and every sequence of integers $1\leq k_2<k_3<\cdots<k_{a}<c_1<c_2<\cdots<c_b \leq k-2$, the induced subgraph on $S = \{x_1,\ldots,x_a,y_1,\ldots,y_b\}$, where $x_1 = x,\, x_i = x + k_i t$ for $i = 2,\ldots,a$, and $y_i = x + c_i t + 1$ for $i = 1,\ldots,b$, is isomorphic to $K_{a,b}$ (see \eqref{eq:GAsin}). 
    
    Take now $S \subseteq \mathbb Z_{n}$ such that the corresponding induced subgraph of $\GAsin$ is isomorphic to $K_{a,b}$, and let us prove that it is of this form. 
    We denote by $A = \{x_1,\ldots,x_a\}$ and $B = \{y_1,\ldots,y_b\}$ the bipartition sets of $S$ and assume, without loss of generality, that $0 = x_1 < x_2 < \cdots < x_a < t(k-1) + 2$, $y_1 < \cdots < y_b < t(k-1) + 2$. Since $y_j \in N(0)$, we have that $y_j \equiv 1\pmod t$  for all $j\in\{1,\dots,b\}$. 
    If $x_j<y_1$ for some $j\in\{2,\dots,a\}$, then the fact that $x_j$ and $y_1$ are neighbors implies that $x_j\equiv 0\pmod t$. On the other hand, if $x_j>y_1$, then $x_j\equiv 2\pmod t$.

    Let $i\in\{1,\dots,a\}$ be the largest index such that $x_i<y_1$. We claim that $x_i<y_1<y_2<\dots<y_b<x_{i+1}$ (see Figure \ref{fig:EsquemaBipartitosInducidosGA}). Otherwise, there exists $j$ such that $y_1<x_j<y_b$; however, this implies that $x_j\equiv2\pmod t$ (because $y_1 < x_j$ and $\{y_1,x_j\}$ is an edge) and $x_j\equiv 0\pmod t$ (because $x_j < y_b$ and $\{x_j,y_b\}$ is an edge), a contradiction. 

     \begin{figure}[!ht]
 \centering
 	\includegraphics[width=5cm]{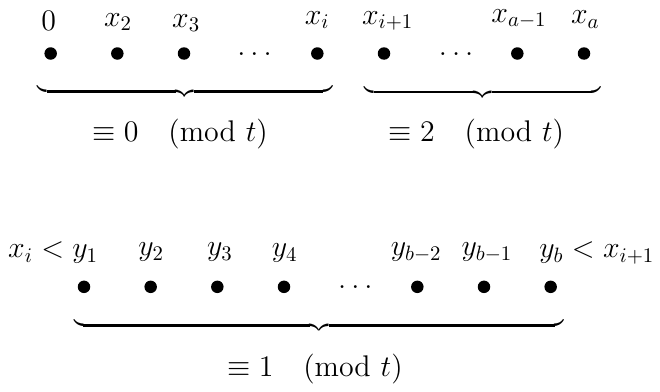}
 	 	\caption{Vertices of the complete bipartite induced subgraph of $\GAsin$} \label{fig:EsquemaBipartitosInducidosGA}
 \end{figure}

By considering $S' = S - x_{i+1} = \{s - x_{i+1} \, \vert \, s \in S\}$, we have that $S'$ also induces a complete bipartite graph with bipartition sets $A' = \{x_1',x_2',\ldots,x_a'\}$ and $ B' = \{y_1',\ldots,y_b'\}$, where $0 = x_1' < x_2' < \cdots < x_a' < y_1' < \cdots < y_b' < t(k-1) + 2,\, x_j' \equiv 0 \ ({\rm mod}\ t)$, and $y_j' \equiv 1 \ ({\rm mod}\ t)$ for all $j$.

 Now we write $x_i'=t k_{i}$ for all $i\in\{2,\dots,a\}$ and $y_j'=t c_j+1$ for all $j\in\{1,\dots,b\}$. Since $\{x_a, y_1\}$ and $\{0,y_b\}$ are edges, we deduce that $c_1 > k_a$ and $c_b \neq k-1$.

Hence, we have seen that there is a correspondence between the $(a+b)$-tuples $(x,k_2,\ldots,k_a,c_1,\ldots,c_b)$, with $x \in V(\GAsin)$ and $1 \leq k_2 < \cdots < k_a < c_1 < \cdots < c_b \leq k-2$, and the induced subgraphs of $\GAsin$ isomorphic to $K_{a,b}$. To derive the formula for $k_{a,b}$, it suffices to observe that this correspondence is bijective whenever $a < b$, and all fibers have size 2 if $a = b$. \end{proof}

\begin{prop}\label{prop:LinearStrandAndrasfaiSinCiclo}
    For all $i\geq0$, \[\beta_{i,i+2}(I(\GAsin))=n\, {k-2 \choose i+1} \left(\frac{i+1}{2}\right)\, .\]
\end{prop}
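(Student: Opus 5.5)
The plan is to combine Lemma~\ref{lema:LinearStrandTriangleFree} with Lemma~\ref{lema:BipartitosAndrasfaiSinCiclo}. Lemma~\ref{lema:BipartitosAndrasfaiSinCiclo} requires $t,k\geq 3$, so we first treat that range. $\GAsin$ is a subgraph of the triangle-free graph $\GA$, hence triangle-free. Lemma~\ref{lema:LinearStrandTriangleFree} then gives
\[
\beta_{i,i+2}(I(\GAsin))=\sum_{\substack{a+b=i+2\\ 0<a\leq b}} k_{a,b}(\GAsin).
\]
By Lemma~\ref{lema:BipartitosAndrasfaiSinCiclo}, every term contains the common factor $\binom{k-2}{a+b-1}=\binom{k-2}{i+1}$. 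So it suffices to count the pairs $(a,b)$ with $a+b=i+2$ and $0<a\leq b$, giving weight $n$ to pairs with $a<b$ and weight $n/2$ to the pair with $a=b$, if it occurs.

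This count is elementary. Consider the ordered pairs $(a,b)$ with $a,b\geq 1$ and $a+b=i+2$; there are $i+1$ of them. The unordered pairs with $a<b$ each correspond to two ordered pairs, while the diagonal pair (present exactly when $i$ is even) corresponds to one. Hence the weighted sum equals $\frac{n}{2}$ times the number of ordered pairs, that is, $\frac{n}{2}(i+1)$, whatever the parity of $i$. This yields the claimed formula $n\binom{k-2}{i+1}\frac{i+1}{2}$. (For $i=0$ this gives $n\binom{k-2}{1}/2=n(k-2)/2=|E(\GAsin)|$, as expected.)

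It remains to handle the cases outside the hypotheses of Lemma~\ref{lema:BipartitosAndrasfaiSinCiclo}, and I expect this bookkeeping to be the only real obstacle.

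\begin{itemize}
\item \emph{Case $k=2$.} The graph has no edges and $\binom{0}{i+1}=0$, so both sides vanish.
\item \emph{Case $t=1$.} $\GAsin$ is the complement of the cycle $C_{k+1}$. I would redo the count of induced complete bipartite subgraphs directly, noting that the proof of Lemma~\ref{lema:BipartitosAndrasfaiSinCiclo} never really used $t\geq 3$ except to separate residues. Alternatively, I would quote the known linear strand of complements of cycles from \cite{FRG09}.
\item \emph{Case $t=2$.} $\GAsin$ is the bipartite complement of an even cycle. I would again either adapt the proof of Lemma~\ref{lema:BipartitosAndrasfaiSinCiclo} or invoke the formula from \cite{FRG14}. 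For $t\leq 2$ the residue conditions mod $t$ degenerate, so one must recheck the bijection with the tuples $(x,k_2,\dots,c_b)$, in particular the argument that $A$ and $B$ occupy consecutive arcs.
\end{itemize}

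If the statement is intended only under the standing hypothesis $t,k\geq 3$, the first two paragraphs already complete the proof.
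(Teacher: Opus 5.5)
For $t,k\geq 3$ your argument is the paper's argument. The paper's proof is exactly Lemma~\ref{lema:LinearStrandTriangleFree} followed by substituting the values from Lemma~\ref{lema:BipartitosAndrasfaiSinCiclo}. Your count, $\frac{n}{2}$ times the $i+1$ ordered pairs $(a,b)$, is the correct arithmetic that the paper leaves implicit. The case $k=2$ is also fine.

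Your plan for $t\in\{1,2\}$ cannot succeed, however, because the identity is false there; this is a genuine error in the proposal.

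\emph{The case $t=1$.} Take $k=5$, so $n=6$ and ${\rm GA}(1,5)'=C_6^c$, the triangular prism. It contains the triangles $\{0,2,4\}$ and $\{1,3,5\}$, so Lemma~\ref{lema:LinearStrandTriangleFree} is not even available. By Proposition~\ref{prop:LinearStrand}, $\beta_{1,3}$ equals the number of induced paths on three vertices plus twice the number of triangles. That is $(18-6)+2\cdot 2=16$, whereas the formula gives $6\binom{3}{2}=18$. More generally, for $t=1$ one gets $\beta_{1,3}=n\binom{k-2}{2}-T$, where $T$ is the number of triangles, so the formula fails for every $k\geq 5$.

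\emph{The case $t=2$.} The graph is triangle-free, but the step you flagged, that the two parts occupy complementary arcs, really does fail. In ${\rm GA}(2,6)'$ (so $n=12$), the set $\{0,3,6,9\}$ induces a $K_{2,2}$ with interleaved parts $\{0,6\}$ and $\{3,9\}$. Counting directly by pairs of even vertices at cyclic distance $2$, $4$ and $6$ gives
\[
k_{2,2}=6\cdot 3+6\cdot 1+3\cdot 1=27 \neq 24=\tfrac{12}{2}\binom{4}{3}.
\]
Hence $\beta_{2,4}=48+27=75\neq 72$. The same interleaved $K_{2,2}$ exists for every $k\geq 6$, so the formula fails throughout that range.

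Neither adapting the lemma nor quoting \cite{FRG09,FRG14} will rescue these cases. The proposition must be read under the hypothesis $t\geq 3$ inherited from Lemma~\ref{lema:BipartitosAndrasfaiSinCiclo}, which is what the paper's one-line proof tacitly assumes. With that restriction your first two paragraphs are a complete proof. The list of extra cases should be replaced by the remark that the statement does not extend to $t\leq 2$.
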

\begin{proof}
    By Lemma \ref{lema:LinearStrandTriangleFree}, it is enough to compute $\sum_{\substack{a+b=i+2 \\ 0<a\leq b}}k_{a,b}$. The result follows by substituting $k_{a,b}$ by the values obtained in Lemma \ref{lema:BipartitosAndrasfaiSinCiclo}.
    \end{proof}

By Proposition \ref{prop:LinearStrandAndrasfaiSinCiclo}, it is clear that the last nonzero entry in the linear strand of $I(\GAsin)$ is $\beta_{k-3,\,k-1}$.

\begin{cor}\label{cor:SimetriaLinearStrand}
    The linear strand of $I(\GAsin)$ is symmetric, i.e., $$\beta_{i,\,i+2}(I(\GAsin))=\beta_{k-3-i,\,k-1-i}(I(\GAsin))\ \text{ for\ all }\ i\in\{0,1,\dots,k-3\}\,.$$
\end{cor}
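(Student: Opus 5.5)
The plan is to read the symmetry off the closed formula of Proposition~\ref{prop:LinearStrandAndrasfaiSinCiclo}. Write $m:=k-2$. The proposition gives $\beta_{i,i+2}(I(\GAsin)) = \frac{n}{2}(i+1)\binom{m}{i+1}$. It is nonzero exactly when $0\le i\le m-1=k-3$, which is why the range in the statement is natural. For $i$ in this range, set $j:=k-3-i$, so that $j+1 = m-i$. The claim then reduces to the binomial identity
\[
(i+1)\binom{m}{i+1} = (m-i)\binom{m}{m-i}, \qquad 0\le i\le m-1 .
\]

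To prove this identity, use the absorption rule $r\binom{m}{r} = m\binom{m-1}{r-1}$ on both sides. The left side becomes $m\binom{m-1}{i}$ and the right side becomes $m\binom{m-1}{m-i-1}$. These two quantities agree by the symmetry $\binom{m-1}{i}=\binom{m-1}{m-1-i}$. Multiplying by $n/2$ then gives $\beta_{i,i+2}=\beta_{k-3-i,k-1-i}$, since $(k-3-i)+2=k-1-i$.

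There is one point to be careful about. Lemma~\ref{lema:BipartitosAndrasfaiSinCiclo}, and hence the proposition, was stated for $t,k\ge 3$. The remaining cases should be handled separately. For $t\in\{1,2\}$, the graph $\GAsin$ is the complement of a cycle, respectively the bipartite complement of an even cycle. For these graphs the linear strands are known from \cite{FRG09,FRG14}, or one can recount $k_{a,b}$ directly in the same way. Also, $k\le 3$ is trivial: for $k=3$ the strand is the single entry $\beta_{0,2}$.

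Otherwise the argument is a direct computation, and I expect no real obstacle beyond confirming that the formula of the proposition applies in the case at hand.
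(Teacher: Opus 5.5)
Your argument for $t,k\ge 3$ is the paper's. The corollary is read off from the closed formula in Proposition~\ref{prop:LinearStrandAndrasfaiSinCiclo}, and your reduction to $(i+1)\binom{m}{i+1}=(m-i)\binom{m}{m-i}$ and its proof by absorption are correct.

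The genuine error is your last paragraph. You claim the cases $t\in\{1,2\}$ can be settled by citing \cite{FRG09,FRG14}, or by recounting $k_{a,b}$ ``in the same way''. For $t=2$ the statement is in fact false. The graph is bipartite, so Lemma~\ref{lema:LinearStrandTriangleFree} still applies. However, the parametrization in Lemma~\ref{lema:BipartitosAndrasfaiSinCiclo} breaks down: its key contradiction, $x_j\equiv 2$ versus $x_j\equiv 0 \pmod t$, disappears when $t=2$. The two sides of an induced $K_{a,b}$ can then interleave around $\mathbb Z_n$. For example, take ${\rm GA}(2,6)'$, i.e.\ $K_{6,6}$ minus the cycle $C_{12}$. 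There the set $\{0,6\}\cup\{3,9\}$ induces a $K_{2,2}$ that is not of the form used in the lemma. In total $k_{2,2}=27$, not $\frac{12}{2}\binom{4}{3}=24$. Hence $\beta_{1,3}=k_{1,2}=12\binom{4}{2}=72$, while $\beta_{2,4}=k_{1,3}+k_{2,2}=48+27=75$. The linear strand is $24,72,75,24$, which is not symmetric. In general, for $t=2$, $\beta_{i,i+2}$ equals the number of independent $(i+2)$-subsets of the cycle $C_{2k}$ minus $2\binom{k}{i+2}$.

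For $t=1$ the symmetry does hold, but not by your route. Once $k\ge 5$, $C_{k+1}^c$ contains triangles, so neither Lemma~\ref{lema:LinearStrandTriangleFree} nor the formula of Proposition~\ref{prop:LinearStrandAndrasfaiSinCiclo} applies. For instance, $\beta_{1,3}(I(C_6^c))=16$, whereas the formula would give $18$. Instead, Proposition~\ref{prop:LinearStrand} gives $\beta_{i,i+2}=n\binom{n-2}{i+1}-\binom{n}{i+2}$ with $n=k+1$, by counting components of induced subgraphs of $C_n$. This expression is invariant under $i\mapsto n-4-i=k-3-i$. Equivalently, $\Delta(C_n^c)$ is a triangulated circle, so the quotient ring is Gorenstein. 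You should therefore state the corollary, as the paper implicitly does, under the standing hypothesis $t,k\ge 3$ of Lemma~\ref{lema:BipartitosAndrasfaiSinCiclo}, and drop the claim about $t\in\{1,2\}$.
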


As another consequence of Proposition \ref{prop:LinearStrandAndrasfaiSinCiclo}, we obtain the vertex connectivity of the complement of $\GAsin$.

By Proposition \ref{prop:LinearStrand}, we have that $\beta_{i,i+2}(I(G))=0$ if and only if $W^c$ is connected for all $W\leq G$ with $|V(W)|=i+2$, which is  equivalent to $\kappa(G^c) > |V(G)| - (i+2)$.  In terms of the first row of the Betti diagram of $I(G)$, this means that its last nonzero entry belongs to the column $|V(G)|-\kappa(G^c)-2$. This information, together with the fact that the last nonzero element in the linear strand of $I(\GAsin)$ is $\beta_{k-3,k-1}$, yields the following:

\begin{cor}\label{cor:ConectividadSinCiclo}
    The vertex connectivity of $(\GAsin)^c$ is equal to $(t-1)(k-1)+2$.
\end{cor}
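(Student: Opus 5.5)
The plan is to combine the observation stated just before Corollary~\ref{cor:ConectividadSinCiclo} with the location of the last nonzero entry of the linear strand. For any graph $G$, Proposition~\ref{prop:LinearStrand} shows that $\beta_{i,i+2}(I(G))=0$ exactly when $W^c$ is connected for every induced subgraph $W$ on $i+2$ vertices. This happens exactly when deleting any $|V(G)|-(i+2)$ vertices from $G^c$ leaves a connected graph. Hence the largest $i$ with $\beta_{i,i+2}\neq 0$ is $|V(G)|-\kappa(G^c)-2$.

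I will justify this equivalence carefully. If $S$ is a separating set of $G^c$ of size $s$, I take $W = G\setminus S$, so that $W^c = G^c\setminus S$ is disconnected and $\beta_{n-s-2,\,n-s}\neq 0$. Conversely, a disconnected $W^c$ on $i+2$ vertices yields a separating set of size $n-i-2$. This uses $\#{\rm comp}(W^c)\ge 2$ giving a positive contribution, and every summand in Proposition~\ref{prop:LinearStrand} being nonnegative. Thus $\beta_{i,i+2}\neq 0$ if and only if $i\le n-2-\kappa(G^c)$, which gives the claimed column.

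Next I will check that $(\GAsin)^c$ is not complete, so that $\kappa$ is defined by separating sets. This holds for $k\geq 4$, since $\GAsin$ has edges. I also need to handle the degenerate cases: for $k=3$ the graph is $(t+1)\cdot K_2$, and the formula $(t-1)\cdot 2+2=2t$ should be checked directly.

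Finally, by Proposition~\ref{prop:LinearStrandAndrasfaiSinCiclo}, $\beta_{i,i+2}\neq0$ if and only if $\binom{k-2}{i+1}\neq0$, that is, $i\le k-3$. Equating $k-3 = n-\kappa-2$ with $n=t(k-1)+2$ gives
\[
\kappa\big((\GAsin)^c\big)=t(k-1)+2-(k-1)=(t-1)(k-1)+2 .
\]
The only delicate point is the equivalence in the second paragraph, together with the convention for complete graphs. Neither is a serious obstacle.
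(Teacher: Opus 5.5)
Your proposal is correct and follows essentially the paper's argument: both use Proposition~\ref{prop:LinearStrand} to place the last nonzero entry of the linear strand in column $|V(G)|-\kappa(G^c)-2$, and then match this with the last nonzero entry $\beta_{k-3,k-1}$ from Proposition~\ref{prop:LinearStrandAndrasfaiSinCiclo}. The separate check you plan for $k=3$ is not needed, since ${\rm GA}(t,3)'=(t+1)\cdot K_2$ has edges and the formula of Proposition~\ref{prop:LinearStrandAndrasfaiSinCiclo} already gives $\beta_{0,2}=t+1$ as the only nonzero entry of its linear strand, so the general argument applies unchanged.
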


Since $(\GAsin)^c={\rm Cay}(\mathbb Z_{n},\,\{m\in\{2,3,\dots,(k-1) t\}\ |\ m\not\equiv1\pmod t\}\cup\{1,\,n-1\})$ and the connection set has cardinality $n-k+1=(t-1)(k-1)+2=\kappa((\GAsin)^c)$, we conclude that $(\GAsin)^c$ is $\kappa$-optimal.

\section{Main diagonal of the Betti diagram}\label{sec:MainDiagonal}

In this section, we will study the main diagonal of the Betti diagram of $I(\GAsin)$. For this purpose, we will use the following result by Katzman:

\begin{lem}[\hspace{-0.0001mm}\cite{Kat06}, Lemma 2.2] \label{lema:MainDiagonalKatzman}
    For any graph $G$ and any $i\geq 0$, $\beta_{i,d}(I(G))=0$ for all $d> 2\,(i+1)$ and $\beta_{i,\,2(i+1)}(I(G))$ is equal to the number of induced matchings of $G$ of size $i+1$.
\end{lem}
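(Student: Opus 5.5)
The plan is to derive both statements from Hochster's formula in the form of Proposition~\ref{prop:HochsterRothVanTuyl}. The key step is a purely topological claim about independence complexes:

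\emph{Claim.} For every graph $W$ on $m$ vertices and every $q\geq 0$, if $\widetilde H_q(\Delta(W))\neq 0$ then $m\geq 2(q+1)$. Moreover, equality $m=2(q+1)$ holds only when $W\cong (q+1)\cdot K_2$.

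Granting the claim, the lemma follows by taking $q=j-2$ and $m=i+j$. If $d=i+j>2(i+1)$, then $j>i+2$, so $m=i+j<2(j-1)=2(q+1)$. Hence every summand in Proposition~\ref{prop:HochsterRothVanTuyl} vanishes and $\beta_{i,d}=0$. If $d=2(i+1)$, then $m=2(q+1)$. The only induced subgraphs contributing are those isomorphic to $(i+1)\cdot K_2$, that is, the induced matchings of size $i+1$. By Lemma~\ref{lema:HomologiaPalitos}, each of them contributes $\dim_{\mathbb K}\widetilde H_i=1$.

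I would prove the claim by induction on $m$. If $W$ has an isolated vertex, then $\Delta(W)$ is acyclic by Lemma~\ref{lema:BasicsHomology}(a), and there is nothing to prove. Otherwise, pick any vertex $v$. Its link in $\Delta(W)$ is $\Delta(W\setminus N[v])$, on $m-|N[v]|\leq m-2$ vertices. Its deletion is $\Delta(W\setminus\{v\})$, on $m-1$ vertices.

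When the link is not $\{\emptyset\}$, Theorem~\ref{thm:MayerVietoris} gives an exact piece
\[
\widetilde H_q(\del)\to\widetilde H_q(\Delta(W))\to\widetilde H_{q-1}(\lk).
\]
So $\widetilde H_q(\Delta(W))\neq0$ forces one of the two outer groups to be nonzero, and the two cases go as follows.
\begin{itemize}
\item If $\widetilde H_q(\del)\neq 0$, induction gives $m-1\geq 2(q+1)$, so the inequality is strict.
\item If $\widetilde H_{q-1}(\lk)\neq0$, the link is nonempty, so for $q\geq1$ induction gives $m-|N[v]|\geq 2q$. Hence $m\geq 2q+\deg(v)+1\geq 2(q+1)$. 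For $q=0$ the corresponding group is $\widetilde H_{-1}$, which is nonzero only for the complex $\{\emptyset\}$, a case excluded here.
\end{itemize}
Equality therefore requires $\deg(v)=1$ and $W\setminus N[v]\cong q\cdot K_2$. Since $v$ was arbitrary, equality forces every vertex to have degree exactly $1$. A graph with no isolated vertices and all degrees $1$ is a perfect matching, so $W\cong(q+1)\cdot K_2$.

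The degenerate case $\lk_{\Delta(W)}(v)=\{\emptyset\}$, i.e.\ $N[v]=V(W)$, must be handled separately. There $\Delta(W)$ is the disjoint union of the point $v$ and $\Delta(W\setminus\{v\})$. For $q\geq1$ its homology coincides with that of the deletion, which gives the strict inequality. For $q=0$ we have $m\geq 2$ trivially, and equality $m=2$ means $W=K_2$.

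I expect the main obstacle to be the bookkeeping of the equality case, together with the low-degree boundary cases ($q=0$ and the degenerate link). The inequality itself is a routine induction; the content lies in arguing that equality pins down $W$ up to isomorphism. I would handle this by first proving the inequality in full, and then rerunning the same argument with a vertex of maximum degree to exclude $\deg(v)\geq2$.
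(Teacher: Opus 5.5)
Your proof is correct. The paper gives no argument for this lemma: it quotes it from Katzman \cite{Kat06} and uses it as a black box. So your proof is a self-contained replacement for that citation, not a variant of anything in the paper.

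What you actually establish is a slightly stronger topological statement: $\widetilde H_q(\Delta(W))\neq 0$ forces $|V(W)|\geq 2(q+1)$, with equality only for $W\cong(q+1)\cdot K_2$. You prove it by a link/deletion induction using only tools already in Section~\ref{sec:prel} (Lemma~\ref{lema:BasicsHomology}(a), Theorem~\ref{thm:MayerVietoris}, Lemma~\ref{lema:HomologiaPalitos}). Both assertions then follow from Proposition~\ref{prop:HochsterRothVanTuyl}.

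A standard alternative goes through the Taylor resolution, of which the minimal resolution is a direct summand. Its $i$-th module has basis elements in degrees $\deg\operatorname{lcm}(m_0,\dots,m_i)\leq 2(i+1)$ for $i+1$ edge monomials. So the vanishing for $d>2(i+1)$ is immediate, for any monomial ideal generated by quadrics. Degree $2(i+1)$ occurs only on supports $W$ that are unions of $i+1$ pairwise disjoint edges. For such $W$, $\Delta(W)$ is a subcomplex of the octahedral $i$-sphere $\Delta((i+1)\cdot K_2)$, and it is proper exactly when $W$ carries an extra edge. A proper subcomplex of that sphere has $\widetilde H_i=0$, because the fundamental cycle uses every facet. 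That route is quicker for the vanishing part. Yours avoids the Taylor resolution and the pseudomanifold argument, and stays within the homological toolkit the paper uses throughout.

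Two small points on the write-up:
\begin{itemize}
\item In the equality case, the step ``since $v$ was arbitrary'' is justified only because your degenerate-case analysis excludes, for $q\geq 1$ and $m=2(q+1)$, any vertex with $N[v]=V(W)$. State this explicitly.
\item You never need the equality clause of the inductive hypothesis (that $W\setminus N[v]\cong q\cdot K_2$). Having $\deg(v)=1$ for every vertex already gives $W\cong(q+1)\cdot K_2$. For the same reason, the planned rerun with a vertex of maximum degree is unnecessary.
\end{itemize}
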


Throughout this section, when $\{i,j\} \in E(\GAsin)$ with $0 \leq i < j < n$, we refer to $i$ as the {\em beginning} of the edge, and to $j$ as the {\em end} of the edge.

When $k = 3$, then ${\rm GA}(t,3)' = (t+1) \cdot K_2$ and its induced matching number is $t+1$. For $k \geq 4$, we prove in the following lemma that the induced matching number is $t$, and we compute the number of induced matchings of size $t$ in $\GAsin$ (see Figure \ref{fig:3PalitosAnd(6)} for an example of an induced matching of size 3 in ${\rm And}(6)'$). From this lemma we derive in  Proposition~\ref{prop:DiagonalSinCiclo} an explicit formula for the value $\beta_{t-1,2t}(I(\GAsin))$, the last nonzero entry in the main diagonal of the Betti diagrams of these edge ideals.

 \begin{figure}[!ht]
 \centering
 	\includegraphics[width=6cm]{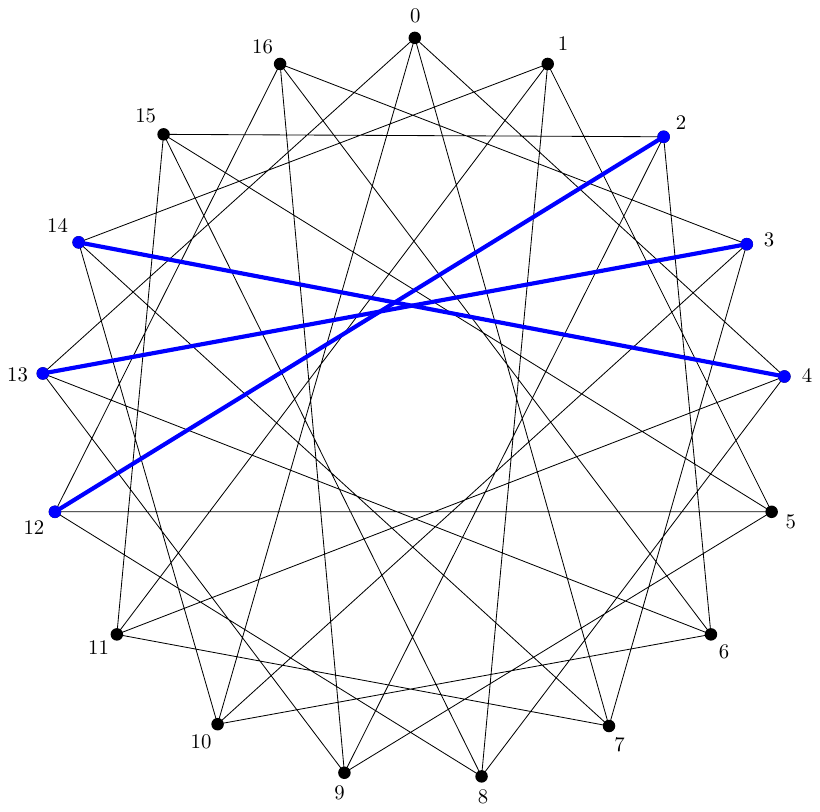}
 	 	\caption{Induced matching of size 3 in ${\rm And}(6)'$} \label{fig:3PalitosAnd(6)}
 \end{figure}

\begin{lem}\label{lema:DiagonalSinCiclo}
    For $k \geq 4$, the induced matching number of $\GAsin$ is $t$, and the number of induced matchings of size $t$ is exactly $\frac{n}{2}\,(t(k-3)+1)$.
\end{lem}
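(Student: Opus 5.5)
The proof has two parts: an upper bound $m\le t$ on the size $m$ of any induced matching of $\GAsin$, and an exact count of the matchings attaining it. Throughout I will work on the $n$-cycle $\mathbb Z_n$ drawn as a circle, with $n=t(k-1)+2\equiv 2\pmod t$. By \eqref{eq:GAsin}, $\{u,w\}$ is an edge iff the clockwise distance $d=w-u$ satisfies $d\equiv 1\pmod t$ and $d\notin\{1,n-1\}$. Since $n-d\equiv 1\pmod t$ as well, each edge is a chord whose two arcs both have length $\equiv 1\pmod t$ and at least $t+1$.

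\emph{Lower bound.} I exhibit the family of $t$ parallel chords $\{c+j,\;c+j+t(k-2)+1\}$ for $j=0,\dots,t-1$. This is an induced matching: for $j\neq j'$,
\begin{itemize}
\item the starts differ by $j'-j\in\{\pm1,\dots,\pm(t-1)\}$, which is either $\not\equiv 1\pmod t$ or a distance-$1$ pair (not an edge of $\GAsin$);
\item a start and an end differ by $t(k-2)+1+(j'-j)\equiv 1+(j'-j)\pmod t$, which is $\not\equiv 1$.
\end{itemize}
For $t=1$ no check is needed, since the family is a single edge.

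\emph{Upper bound.} Take an induced matching with $m$ edges and list its $2m$ endpoints in cyclic order, with consecutive gaps $g_1,\dots,g_{2m}$ summing to $n$. Every arc between two endpoints that are not matched to each other must avoid lengths in $\{\ell\equiv 1\pmod t,\ t+1\le \ell\le n-t-1\}$. For such an arc this leaves only two options: length $\not\equiv 1\pmod t$, or length exactly $1$ or $n-1$. I first show that no two chords of the matching can be nested or disjoint in the circular sense, so all $m$ chords pairwise cross. The idea is that for non-crossing chords $\{u,w\}$ and $\{u',w'\}$ the arc lengths $u'-u$ and $w'-w$ are forced into the same residue as $w-u$ too often. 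Writing residues mod $t$, crossing chords $u_1<u_2<\dots<u_m<w_1<\dots<w_m$ with $w_i-u_i\equiv1$ force $u_i-u_j\not\equiv 0\pmod t$ for $i\ne j$: otherwise $w_i-u_j\equiv 1$ would make $\{u_j,w_i\}$ an edge, unless its arc length is $1$ or $n-1$, and I will check that case separately. So the starts $u_1,\dots,u_m$ lie in distinct residue classes mod $t$, giving $m\le t$.

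\emph{Counting.} With $m=t$, the $u_i$ cover all residues exactly once, and the same holds for the $w_i$. Together with the distance constraints, this should force the $u_i$ into consecutive positions, apart from a controlled exception where a single gap of size $2$ is allowed along the exterior cycle. The ends are then determined up to one free choice of the common chord length $1+jt$, while the ends shift compatibly. I would parametrize by (starting vertex, length data) and count: $n$ rotations, times $t(k-3)+1$ admissible configurations, divided by $2$ because each matching is counted once from each of its two "sides" (start versus end). This reproduces the formula in the statement. As a sanity check, for $t=1$ the count gives $\frac n2(k-2)$, which is exactly the number of edges of the complement of $C_{k+1}$.

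\emph{Main obstacle.} I expect the hardest step to be the crossing structure and the exact enumeration of the size-$t$ configurations. The exceptional distances $1$ and $n-1$, which are excluded from being edges, are precisely what allows configurations beyond the rigid parallel family. Tracking them is what should produce the term $t(k-3)+1$ rather than a simpler count. I would first test the classification by hand on ${\rm And}(6)'$ and ${\rm GA}(4,5)'$ before writing the general case analysis.
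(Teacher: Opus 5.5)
Your lower-bound family is correct: it is the paper's configuration with the second block at offset $n-t-1$. The end--end pairs, which you did not check, are just a translate of the start--start pairs. The upper bound and the count, however, have genuine gaps.

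\textbf{The crossing claim is false.} The chords of a maximum induced matching need not pairwise cross, and they need not have a common length. In ${\rm GA}(3,6)'$ (connection set $\{\pm4,\pm7\}$ in $\mathbb Z_{17}$), the edges $\{0,7\},\{1,5\},\{2,6\}$ form an induced matching of size $3$. Their lengths are $7,4,4$, and $\{1,5\}$ is nested inside $\{0,7\}$. Already in ${\rm GA}(2,4)'$ the induced matching $\{0,5\},\{1,4\}$ is nested. So the step ``no two chords can be nested or disjoint'' cannot be proved. Your residue argument is formulated only for the crossing order $u_1<\dots<u_m<w_1<\dots<w_m$, which need not occur.

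The pigeonhole does not actually need crossing. Rotate so that some covered vertex is $0$ and $n-1$ is uncovered, and write every edge as $u_i<v_i$ in $\{0,\dots,n-2\}$. If $u_i\equiv u_j \pmod t$ with $u_i<u_j$, then $v_j-u_i\equiv v_j-u_j\equiv 1$ and $1<v_j-u_i\le n-2$, so $\{u_i,v_j\}$ is an edge, a contradiction. This normalization also disposes of the ``length $1$ or $n-1$'' case you deferred. It is exactly the paper's argument.

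\textbf{The counting paragraph is not a proof, and the structure it anticipates is wrong.} Parallel matchings with a common chord length $1+jt$, $1\le j\le k-2$, number only $\frac{n}{2}(k-2)$. This agrees with $\frac{n}{2}(t(k-3)+1)$ only when $t=1$, so your sanity check tests nothing. Also, no ``gap of size $2$'' exception occurs.

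What is missing is the classification. With the normalization above and $m=t$, the beginnings are pairwise non-adjacent and represent every residue class. So the beginning in class $1$ is at distance $1$ or $n-1$ from $0$, hence equals $1$. Iterating gives beginnings $\{0,\dots,t-1\}$, and the symmetric argument from the largest end shows the ends form a block $\{y,\dots,y+t-1\}$.

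Conversely, for any $x$ and any $y-x\in\{t+1,\dots,n-t-1\}$, the blocks $\{x,\dots,x+t-1\}$ and $\{y,\dots,y+t-1\}$ induce a matching. Inside a block, the only distance $\equiv 1 \pmod t$ is $1$, which is not an edge. Between blocks, all distances lie in $[2,n-2]$, and each vertex has exactly one partner in the required residue class. That bijection is a cyclic shift, which explains the nesting above.

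Thus $t(k-3)+1=n-2t-1$ is simply the number of admissible offsets $y-x$, and each unordered pair of blocks arises twice. The count is not produced by the exceptional distances $1$ and $n-1$; their only role is to make each block independent.
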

\begin{proof}
For every $x\in V(\GAsin)$ and every $y\in\{x+(t+1),\dots,x+(n-t-1)\}$, the induced subgraph of $\GAsin$ with vertices  $\{x,\,x+1,\dots,\,x+(t-1),\,y,\,y+1,\dots,\,y+(t-1)\}$ is an induced matching of size $t$.

Take now $S\subseteq \mathbb Z_n$ such that the corresponding induced subgraph of $\GAsin$ is an induced matching of maximum size, and let us prove that it is of this form. We denote the edges of this matching by $\{u_1,v_1\},\ldots, \{u_s,v_s\}$, where $s\geq t$, $u_i < v_i < t(k-1)+2$ and $u_1 < u_2 < \cdots < u_s$ and assume, without loss of generality, that $u_1 = 0$ and $v_i \neq t(k-1)+1$ for all $i \in \{1,\ldots,s\}$ (we may assume this since $k \geq 4$ and the whole graph is not an induced matching). 

Let us see first that $s = t$. If $s > t$, there are two edges whose beginnings are congruent modulo $t$. Then $u_i \equiv u_j \  ({\rm mod}\ t)$ and if $1 < i < j$, then $u_i < u_j < v_j$ and $v_j - u_j \equiv v_j - u_i \equiv 1\ ({\rm mod}\ t)$. Since $1 < v_j - u_i < t(k-1) + 1$ (because $v_j \neq t(k-1)+1$), then $\{u_i,v_j\} \in E(\GAsin)$, a contradiction (see Figure \ref{fig:t palitos no misma cong}).

\begin{figure}[!ht]
\centering
\includegraphics[width=6cm]{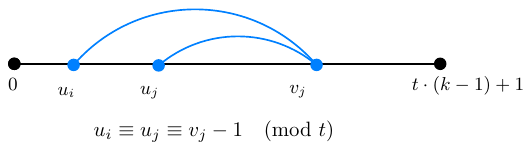}
\caption{Two beginnings that are congruent modulo $t$}  \label{fig:t palitos no misma cong}
\end{figure}

 The previous argument implies that $s = t$ and all possible congruences modulo $t$ will appear among the beginnings of the edges. Furthermore, these beginnings must constitute an independent set.
Since the beginning of one of the $t$ edges has to be congruent to 1 modulo $t$, and it cannot be connected to 0, the only possibility is that this beginning is vertex 1 and then $u_2 = 1$. If we do the same reasoning with the beginning which is congruent to 2 modulo $t$, we get that it has to be vertex 2, and then $u_3 = 2$. Iterating this process, what we get is that $u_i = i-1$ for $i = 1,\ldots,s$. A symmetric argument allows to conclude that the ends of the edges have to be consecutive elements. 

Hence, the set of the beginnings of the edges is $\{0,1,\dots,t-1\}$, and $v_1,\ldots,v_t$ are consecutive vertices. We observe that ${\rm min}\{v_j\} \neq t$, since $t$ it is not connected to any vertex in $\{0,1,\dots,t-1\}$, and this means that ${\rm min}\{v_j\} \geq t+1$. In addition, as ${\rm max}\{v_j\} \leq t (k-1)$, it follows that ${\rm min}\{v_j\} = {\rm max}\{v_j\} - t + 1 \leq t(k-1) -t+1=t(k-2)+1$. 

As a consequence, we have seen that there is a correspondence between the pairs $(x,y)$, with $x\in V(\GAsin)$ and $y\in\{x+(t+1),\dots,x+(n-t-1)\}$ (a set of cardinality $n - 2t - 1 = t(k-3)+1$), and the induced matchings of $\GAsin$ of size $t$. To deduce the total number of induced matchings of $\GAsin$ of size $t$, it suffices to observe that all fibers of this correspondence have size 2, i.e., the pairs $(x,y)$ and $(x',y')$ induce the same subgraph if and only if $\{x,y\} = \{x',y'\}$ . 
\end{proof}

  From Lemmas \ref{lema:MainDiagonalKatzman} and \ref{lema:DiagonalSinCiclo} we directly deduce the following:

\begin{prop}\label{prop:DiagonalSinCiclo}Let $k \geq 4$. Then 
   \[ \beta_{i,\,2(i+1)}(I(\GAsin)) = 0 \text{ if and only if } i \geq t. \] Moreover, $\beta_{t-1,\,2t}(I(\GAsin)) = \frac{n}{2}\,(t(k-3)+1)$.
\end{prop}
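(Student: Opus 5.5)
The plan is to combine Katzman's Lemma \ref{lema:MainDiagonalKatzman} with the count of Lemma \ref{lema:DiagonalSinCiclo}. By Lemma \ref{lema:MainDiagonalKatzman}, for every $i\geq 0$ the entry $\beta_{i,\,2(i+1)}(I(\GAsin))$ equals the number of induced matchings of $\GAsin$ of size $i+1$. The statement therefore reduces to two purely combinatorial facts: $\GAsin$ has an induced matching of size $i+1$ exactly when $i+1\leq t$, and the number of induced matchings of size $t$ is $\frac{n}{2}\,(t(k-3)+1)$.

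For the ``if'' direction, suppose $i\geq t$. Then $i+1>t$. Since $k\geq 4$, Lemma \ref{lema:DiagonalSinCiclo} says the induced matching number of $\GAsin$ is $t$, so there is no induced matching of size $i+1$. Hence $\beta_{i,2(i+1)}=0$.

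For the ``only if'' direction, take $0\leq i\leq t-1$. Lemma \ref{lema:DiagonalSinCiclo} gives an induced matching of size $t$, and the number of such matchings, $\frac{n}{2}(t(k-3)+1)$, is positive because $k\geq4$. Any $i+1\leq t$ of its edges still form an induced matching, since an induced subgraph of an induced matching on a union of some of its edges is again an induced matching. So there are induced matchings of every size $i+1\leq t$, and $\beta_{i,2(i+1)}\neq 0$. In particular the nonzero entries of the main diagonal are exactly those in columns $0,\dots,t-1$.

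Finally, for $i=t-1$, Lemma \ref{lema:MainDiagonalKatzman} identifies $\beta_{t-1,2t}$ with the number of induced matchings of size $t$. Lemma \ref{lema:DiagonalSinCiclo} gives this number as $\frac{n}{2}(t(k-3)+1)$. No step is a real obstacle, since the combinatorial work is already done in Lemma \ref{lema:DiagonalSinCiclo}. The only point needing care is the restriction to sub-matchings in the ``only if'' direction.
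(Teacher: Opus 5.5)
Your proposal is correct and follows the paper's route exactly: the paper also deduces this proposition directly from Katzman's Lemma \ref{lema:MainDiagonalKatzman} together with the induced-matching count of Lemma \ref{lema:DiagonalSinCiclo}. Your explicit observation that any subset of edges of an induced matching is again an induced matching fills in the step the paper leaves implicit, namely that the entries $\beta_{i,2(i+1)}$ with $i<t$ are nonzero.
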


In addition, as a consequence of Proposition \ref{prop:DiagonalSinCiclo}, we get a lower bound for the regularity of $I(\GAsin)$. In particular, we deduce that ${\rm reg}(I(\GAsin))\geq t+1$ whenever $k \geq 4$. In the next section we will see that the actual value of the regularity is $t+2$. % Even more, we will see that ${\rm reg}(I(\GAsin))$ is only attained at the last step of the resolution and that $\beta_{p,p+t+2}(I(\GAsin)) = 1$ for $p = {\rm pd}(I(\GAsin))$.

\section{Regularity}\label{sec:Regularity}

In this section, we use homological tools to compute the last nonzero row of the Betti diagram of $I(\GAsin)$ and conclude that its regularity equals $t+2$. More precisely, the main result of this section is the following:
\begin{thm}\label{thm:lastline} Let $t\geq 1$ and $k \geq 3$. For all $i \in \mathbb N$ and all $j \geq t+2$, then
\[ \beta_{i,i+j}(I(\GAsin)) = \left\{ \begin{array}{ll}
    1 &  if \ i+j = n\ and \ j = t+2,\ and \\
    0 & otherwise.
\end{array} \right.\]
In particular, ${\rm reg}(I(\GAsin)) = t+2$.
\end{thm}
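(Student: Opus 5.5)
The plan is to translate the statement, via Hochster's formula (Proposition \ref{prop:HochsterRothVanTuyl}), into a vanishing statement for the independence complexes of induced subgraphs. Write $G:=\GAsin$. It suffices to prove the following claim $(\ast)_t$. For every induced subgraph $W\le G$ we have $\widetilde H_m(\Delta(W))=0$ for all $m\ge t+1$. Moreover, $\widetilde H_t(\Delta(W))=0$ for $W<G$, while $\widetilde H_t(\Delta(G))\cong\mathbb K$. Indeed, $j\ge t+2$ corresponds to $m=j-2\ge t$, and $W=G$ forces $i+j=n$. I will prove $(\ast)_t$ by induction on $t$, and for fixed $t$ by induction on $|V(W)|$, with $k\ge 3$ fixed.

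The base case is $k=3$, where $G=(t+1)\cdot K_2$ (not needed separately, but a useful sanity check). Here $\Delta(G)$ is a join of $t+1$ copies of $S^0$, so Lemma \ref{lema:HomologiaPalitos} gives $(\ast)_t$ directly. A proper induced subgraph is either $s\cdot K_2$ with $s\le t$, or has an isolated vertex (Lemma \ref{lema:BasicsHomology}(a)). For the induction on $t$, the base case $t=1$ is $G=K_{k+1}$ minus a Hamiltonian cycle. This is the complement of a cycle, whose Betti diagram is known from \cite{FRG09}, or can be handled directly.

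The key structural lemma I would establish first is that $G\setminus N[0]$ is isomorphic to ${\rm GA}(t-1,k)'$, or at least to a graph to which $(\ast)_{t-1}$ applies. Here $N(0)=\{1+jt : 1\le j\le k-2\}$ is the residue class of $1$ modulo $t$ with the two vertices $1$ and $n-1$ removed. Hence $G\setminus N[0]$ has exactly $n-(k-1)=(t-1)(k-1)+2$ vertices, the right number. I would relabel the surviving vertices in cyclic order and check, using \eqref{eq:GAsin}, that adjacency becomes ``difference $\equiv 1 \pmod{t-1}$ and difference $\ne\pm1$''. By vertex transitivity, the same then holds for every $v$ in place of $0$. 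If the isomorphism is only up to a few edges, I would instead adjust the induction hypothesis to cover that slightly larger class.

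For the main induction, take $W\le G$ and $v\in W$. In $\Delta(W)$ we have $\lk(v)=\Delta(W\setminus N[v])$ and $\del(v)=\Delta(W\setminus\{v\})$, and $W\setminus N[v]$ is an induced subgraph of $G\setminus N[v]\cong{\rm GA}(t-1,k)'$. By $(\ast)_{t-1}$, the link has no homology in degrees $\ge t$. Its degree $t-1$ homology vanishes unless $W\supseteq V(G)\setminus N(v)$, in which case it is $\mathbb K$. Corollary \ref{cor:TruquitoMayerVietoris} then gives $\widetilde H_m(\Delta(W))\cong\widetilde H_m(\Delta(W\setminus v))$ for $m\ge t+1$, which vanishes by induction on $|V(W)|$.

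For $m=t$ and $W<G$, pick $u\notin W$ and choose $v\in W$ not adjacent to $u$. Such a $v$ exists because $G$ is $(k-2)$-regular with $n>k-1$ (the case where $W$ lies inside $N(u)$ is trivial, since then $W$ is edgeless and has no homology). Then $u\in G\setminus N[v]$ but $u\notin W$, so the link is proper in $G\setminus N[v]$ and has vanishing degree $t-1$ homology. Mayer--Vietoris then yields $\widetilde H_t(\Delta(W))\cong\widetilde H_t(\Delta(W\setminus v))=0$.

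The main obstacle is the top case $W=G$. Mayer--Vietoris gives
\[0=\widetilde H_t(\Delta(G\setminus v))\to\widetilde H_t(\Delta(G))\to\widetilde H_{t-1}(\lk(v))\cong\mathbb K\to\widetilde H_{t-1}(\Delta(G\setminus v)),\]
so $\widetilde H_t(\Delta(G))\cong\mathbb K$ exactly when the fundamental class of the link dies in $\Delta(G\setminus v)$. The last map is induced by inclusion. My first attempt would be to show this class is a boundary explicitly. The class is a join of $0$-spheres coming from an induced matching structure in ${\rm GA}(t-1,k)'$. I would try to find a vertex $w\ne v$ whose neighbourhood avoids a suitable sub-matching, so that the class lies in a cone inside $\Delta(G\setminus v)$.

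If that fails, I would instead show nonvanishing directly. I would exhibit a $t$-cycle in $\Delta(G)$, for instance the join of the $0$-spheres of the $t+1$ edges $\{i,\,i+1+t\}$ forming an induced $(t+1)\cdot K_2$-like configuration, and argue that it is not a boundary. Alternatively, I would compute the reduced Euler characteristic of $\Delta(G)$ and compare it with the vanishing already established in degrees $\ne t$, at least in low degrees.
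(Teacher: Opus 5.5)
\paragraph{Verdict: genuine gap.} Your reduction via Hochster's formula and the inner induction on $|V(W)|$ using Corollary~\ref{cor:TruquitoMayerVietoris} are the paper's strategy (Proposition~\ref{prop:UltimaFilaPropio}). However, the input you feed into them is wrong, and the two steps that carry the proof are missing.

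\paragraph{The structural lemma is false, and no larger class rescues it.} The claim fails for every $k\ge 4$. Since $N(0)$ is independent, deleting $N[0]$ removes $(k-2)^2$ edges. So $G\setminus N[0]$ has $(k-2)(k-3)/2$ fewer edges than the $(k-2)$-regular graph ${\rm GA}(t-1,k)'$ on the same number of vertices, and is not even an induced subgraph of it. Already for $t=2$, $k=4$ one has $G\cong C_8$ (the cycle $0,3,6,1,4,7,2,5$). There $G\setminus N[0]$ is the path $6,1,4,7,2$, not ${\rm GA}(1,4)'\cong C_5$. The correct relation is Lemma~\ref{lema:Inductivo}, which deletes $N[0]\cup N[1]$ and only lands in ${\rm GA}(t-1,k)'\setminus N[0]$.

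Nor can a ``slightly larger class'' rescue your step for $m=t$ and $W<G$, because the property it needs is false: a proper induced subgraph of $G\setminus N[v]$ can have $\widetilde H_{t-1}\neq 0$. In the $C_8$ example take $W=G\setminus\{4\}$ and $v=0$. Then $4\notin N[0]$, yet $W\setminus N[0]$ is $2\cdot K_2$ on the edges $\{6,1\}$, $\{7,2\}$, whose independence complex has $\widetilde H_1\cong\mathbb K$. In general, suppose $u\notin N[v]$ and both intervals of $G\setminus\{u,v\}$ have length at least $t$. Then $\Delta(G\setminus\{u\})$ is acyclic, so Mayer--Vietoris and Proposition~\ref{prop:lIntervalosLongMayorIgualt} give $\widetilde H_{t-1}({\rm link}_{\Delta(G\setminus\{u\})}(v))\cong\widetilde H_{t-1}(\Delta(G\setminus\{u,v\}))\cong\mathbb K$. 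So an arbitrary $v$ non-adjacent to a missing vertex does not yield $\widetilde H_t(\Delta(W))\cong\widetilde H_t(\Delta(W\setminus\{v\}))$.

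\paragraph{What is missing.} First, you need the paper's Lemma~\ref{lema:HomologiaLinkSinCiclo}. One must take $v=x$ with $x-1\notin V(W)$, the cyclic successor of a missing vertex. One then proves that this particular link has no homology in degrees $\ge t-1$. The paper does this by a separate induction on $t$ built on Lemma~\ref{lema:Inductivo}, with a case analysis on which of $x+1$, $x+2$ and the other vertices in the residue class of $x+2$ belong to the subgraph, using domination (Lemma~\ref{lema:BasicsHomology}(c)) and Mayer--Vietoris. This lemma is the heart of the vanishing part, and nothing in your plan replaces it.

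Second, the top case $W=G$ is left open. The idea you need is that $\Delta(G\setminus\{v\})$ is acyclic for every $v$. In $G\setminus\{v\}$ one has $N(v-1-t)\subseteq N(v-1)$ (Lemma~\ref{lema:Borrado}), so $v-1$ can be deleted, then $v-2$, and so on, until only the independent set $\{v+1,\dots,v+t+1\}$ remains. Hence $\widetilde H_j(\Delta(G))\cong\widetilde H_{j-1}(\Delta(G\setminus N[v]))$ outright. For $v=0$ the latter is computed by peeling off degree-one vertices $t-1$ times (Lemma~\ref{lem:homologiagrado1}) down to a $K_2$ on $\{t,n-1\}$. Your worry about whether the link's class dies in $\Delta(G\setminus\{v\})$ therefore disappears, since that complex has no homology. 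Note also that your value $\widetilde H_{t-1}\cong\mathbb K$ for the link came from the false isomorphism, even though it happens to be correct.
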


This result extends the ones obtained in \cite{FRG09, FRG14}, which correspond to $t = 1$ and $t = 2$, and also generalizes the behavior of ${\rm GA}(t,3)' = (t+1)\cdot K_2$ (see Figure \ref{fig:bettik3}).
To prove it, we first need some auxiliary results.

\begin{lem}\label{lema:Inductivo} Let $t \geq 2$. There exists a graph isomorphism $\phi$ from $\GAsin \setminus (N[0] \cup N[1])$ to  ${\rm GA}(t-1,k)'\setminus N[0]$ such that $\phi(2) = 1$ and $\phi(t(k-1)+1) = (t-1)(k-1)+1$. 
\end{lem}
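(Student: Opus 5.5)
Since the connection set $\{1+jt \mid 1\le j\le k-2\}$ of $\GAsin$ is symmetric ($n-(1+jt)=1+(k-1-j)t$), I would start by computing the vertices to be deleted explicitly. Subtracting the connection set from $1$ gives the same set as adding it, so
\[
N[0]\cup N[1]=\{0,1\}\cup\{jt+1,\ jt+2 \mid 1\le j\le k-2\}.
\]
The remaining vertices all lie in $\{2,\dots,n-1\}$, and there are $(t-2)(k-1)+2$ of them. On the target side, put $n'=(t-1)(k-1)+2$. Then ${\rm GA}(t-1,k)'\setminus N[0]$ has vertex set $\{1,\dots,n'-1\}\setminus\{j(t-1)+1 \mid 1\le j\le k-2\}$, which has the same cardinality.

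Next I would write every $v\in\{2,\dots,n-1\}$ uniquely as $v=qt+r$ with $r\in\{2,\dots,t+1\}$ and $0\le q\le k-2$. In these coordinates the surviving vertices are exactly those with:
\begin{itemize}
\item $r\in\{3,\dots,t\}$ and $q$ arbitrary;
\item $r=2$ and $q=0$, which is the vertex $2$;
\item $r=t+1$ and $q=k-2$, which is the vertex $n-1$.
\end{itemize}
Define $\phi(qt+r)=q(t-1)+(r-1)$, which just deletes one residue class. Then $\phi(2)=1$ and $\phi((k-1)t+1)=(k-1)(t-1)+1$, as required. Writing target vertices as $q(t-1)+s$ with $s\in\{1,\dots,t-1\}$, one checks that $\phi$ is a strictly increasing bijection onto the target vertex set: the excluded classes $s=1$ with $1\le q\le k-2$ are precisely the removed vertices $j(t-1)+1$.

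It remains to check that $\phi$ preserves adjacency and non-adjacency. This step is the main point, and I would use the description \eqref{eq:GAsin}. Take surviving vertices $u=qt+r<v=q't+r'$. Since $\phi$ is increasing, $\phi(u)<\phi(v)$, and
\[
v-u=(q'-q)t+(r'-r),\qquad \phi(v)-\phi(u)=(q'-q)(t-1)+(r'-r),
\]
with $|r'-r|\le t-1$.

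For $t\ge3$ I would argue as follows.
\begin{itemize}
\item The source condition $v-u\equiv1 \pmod t$ forces $r'-r\in\{1,1-t\}$.
\item The target condition forces $r'-r\in\{1,2-t\}$.
\item The values $1-t$ and $2-t$ require $r'=2$ or $r=t+1$, which would mean $v=2$ or $u=n-1$. Both are impossible when $u<v$.
\item So in both graphs the congruence condition is equivalent to $r'=r+1$.
\item The excluded differences also match. A difference of $1$ occurs exactly when $q=q'$ and $r'=r+1$, on both sides. A difference of $n-1$ (resp.\ $n'-1$) would need $u=0$ (resp.\ $u=0$), which is not a vertex.
\end{itemize}

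The case $t=2$ needs separate care, because modulo $t-1=1$ every difference is a congruence. There the surviving vertices are only $2$ and $n-1$, and the target vertices are only $1$ and $n'-1$. I would check directly that $2\sim n-1$: the difference $2k-3$ is odd and differs from $1$ and $2k-1$. I would likewise check that $1\sim n'-1$: the difference $k-1$ is neither $1$ nor $k$, using $k\ge3$.

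I expect the only real obstacle to be this boundary bookkeeping: the special vertices $2$ and $n-1$, the wrap-around differences $1-t$ and $2-t$, and the degenerate modulus when $t=2$. Everything else is a direct translation of \eqref{eq:GAsin}.
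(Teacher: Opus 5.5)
Your proof is correct and uses the same map as the paper: with remainders taken in $\{2,\dots,t+1\}$ instead of $\{0,\dots,t-1\}$, your $\phi(qt+r)=q(t-1)+(r-1)$ agrees on every surviving vertex with the paper's two-case formula, which treats $r\in\{0,1\}$ separately. The paper simply asserts that this $\phi$ is an isomorphism, so your residue bookkeeping fills in the verification it omits: you show that the congruence condition is equivalent to $r'=r+1$ in both graphs, and that the excluded differences correspond to $q=q'$. Your $t=2$ check assumes $k\ge 3$, but this costs nothing, since for $t=k=2$ both graphs are edgeless.
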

\begin{proof}
For each $x \in V(\GAsin) \setminus (N[0] \cup N[1])$ with $0 \leq x < t(k-1) + 2$, we write $x = t q + r$, with $q,r \in \mathbb N$ and $0 \leq r < t$, and define \[ \phi(x) = \left\{ \begin{array}{ll} (t-1) q + r & \text{if } r = 0,1, \ \text{and}\\
    (t-1) q + (r-1) & \text{otherwise}.\end{array} \right. \] Then $\phi$ is actually a graph isomorphism satisfying that $\phi(2) = 1$ and $\phi(t(k-1)+1) = (t-1)(k-1)+1$.
 \end{proof}

The previous lemma allows us to prove the following result by induction.

\begin{lem}\label{lema:HomologiaLinkSinCiclo}
Let $t \geq 2$, $H<\GAsin$ and $x \in V(H)$ such that $x-1\notin V(H)$. 
We have the following: \begin{itemize}
\item if $x+1 \notin V(H)$, then $\widetilde{H}_j(\lk_{\Delta(H)}(x))=0$ for all $j\geq t-2$, and
\item if $x+1 \in V(H)$, then $\widetilde{H}_j(\lk_{\Delta(H)}(x))=0$ for all $j\geq t-1$.
\end{itemize}

\end{lem}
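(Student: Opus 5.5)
By vertex transitivity (the translation $y\mapsto y-x$ is an automorphism of $\GAsin$), I may assume $x=0$, so the hypothesis reads $n-1\notin V(H)$. Since $\lk_{\Delta(H)}(0)=\Delta(H\setminus N[0])$, both statements concern induced subgraphs $W:=H\setminus N[0]$ of $\GAsin\setminus N[0]$ that avoid $n-1$. The first bullet is the case $1\notin W$ and the second the case $1\in W$. I will prove both bullets simultaneously by induction on $t$.

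\textbf{Base case $t=2$.} Here $N[0]$ consists of $0$ and all odd vertices except $1$ and $n-1$. Hence $V(W)$ is contained in the even vertices together with $1$, and the even vertices form an independent set. If $1\notin W$, then $\Delta(W)$ is a simplex or $\{\emptyset\}$, so $\widetilde H_j=0$ for $j\geq 0$. If $1\in W$, then $W$ is a star centred at $1$ together with isolated vertices. By Lemma \ref{lema:BasicsHomology}(a) and Lemma \ref{lema:HomologiaStarGraph}, homology can only occur in degree $0$, so it vanishes for $j\geq 1$.

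\textbf{Inductive step, second bullet ($1\in W$).} Apply Corollary \ref{cor:TruquitoMayerVietoris} to the vertex $1$ of $\Delta(W)$.
\begin{itemize}
\item The deletion is $\Delta(W\setminus\{1\})$. The graph $W\setminus\{1\}$ is the link of $0$ in $H\setminus\{1\}$, which falls under the first bullet.
\item The link is $\Delta(W\setminus N[1])$. The graph $W\setminus N[1]$ is an induced subgraph of $\GAsin\setminus(N[0]\cup N[1])$.
\end{itemize}
By Lemma \ref{lema:Inductivo}, $\phi$ carries $W\setminus N[1]$ to $H'\setminus N[0]$ with $H':=\phi(W\setminus N[1])\cup\{0\}<{\rm GA}(t-1,k)'$. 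Since $\phi(n-1)=n'-1$ with $n'=(t-1)(k-1)+2$, and $n-1\notin W$, we get $n'-1\notin H'$. So the induction hypothesis applies, and the link homology vanishes in degrees $\geq t-2$ in either subcase, because $\phi(2)=1$. The deletion homology vanishes in degrees $\geq t-2$ once the first bullet is known for $t$. Mayer--Vietoris then gives $\widetilde H_j(\Delta(W))=0$ for $j\geq t-1$.

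\textbf{Inductive step, first bullet ($1\notin W$).} This is the step I expect to be the main obstacle. Reversing the previous argument, i.e. adding vertex $1$ back and using the exact sequence
\[
\widetilde H_j(\lk)\to\widetilde H_j(\del)\to\widetilde H_j(\Delta),
\]
only yields vanishing for $j\geq t-1$. It misses the borderline degree $j=t-2$.

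What I would try first is to peel off the smallest vertex $y$ of $W$ instead, for which $y-1\notin W$, and proceed by a secondary induction on $|V(W)|$. Concretely, I would run Mayer--Vietoris on $y$ and show that:
\begin{itemize}
\item its link again lands, via Lemma \ref{lema:Inductivo} applied to the pair $\{0,y\}$ after a translation, in ${\rm GA}(t-1,k)'\setminus N[0]$, with the crucial image of $y+1$ not present, giving vanishing for $j\geq t-3$;
\item its deletion is a smaller instance of the first bullet.
\end{itemize}
The delicate point is checking that removing $N[0]\cup N[y]$ for $y\geq 2$ with $1\notin W$ still admits such an isomorphism. This may require first using Lemma \ref{lema:BasicsHomology}(c) to discard vertices whose neighbourhood is dominated, reducing to the configuration $y=2$, where $\phi(2)=1$ is exactly what Lemma \ref{lema:Inductivo} provides.
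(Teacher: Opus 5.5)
\textbf{There is a genuine gap: the first bullet is never proved.} Your base case is correct. Your inductive step for the second bullet (Mayer--Vietoris at the vertex $1$, link handled by Lemma~\ref{lema:Inductivo}) is also correct and matches the paper's Case~(2). However, that step needs the first bullet at the \emph{same} $t$ for the deletion, so without the first bullet the induction does not close.

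Your sketch for the first bullet does not work as stated, for three reasons.
\begin{itemize}
\item Lemma~\ref{lema:Inductivo} only describes $\GAsin\setminus(N[x]\cup N[x+1])$ for \emph{consecutive} vertices. No translation turns $\{0,y\}$ with $y\ge 2$ into such a pair. In fact, for $2\le y<t$ the graph $\GAsin\setminus(N[0]\cup N[y])$ splits into two components (Proposition~\ref{prop:DescomposicionDisjuntaLink}).
\item Your intermediate claim, that the link of the smallest vertex $y$ of $W$ has no homology in degrees $\ge t-3$, is false. In ${\rm GA}(3,5)'$ we have $n=14$ and $N(0)=\{4,7,10\}$. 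Take $H=\{0,3,5,9\}$, so $W=\{3,5,9\}$ and $y=3$. Since $N(3)=\{7,10,13\}$ and $\{5,9\}$ is an edge, $\lk_{\Delta(W)}(3)=\Delta(\{5,9\})$ consists of two points. Hence $\widetilde H_{t-3}=\widetilde H_0\neq 0$. Here $3$ is isolated in $W$, so $\Delta(W)$ is a cone and the lemma itself holds. But your Mayer--Vietoris step only gives an injection $\widetilde H_1(\Delta(W))\hookrightarrow \widetilde H_0(\lk)$ and cannot conclude.
\item The proposed reduction to $y=2$ by domination is not justified.
\end{itemize}

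\textbf{The missing idea} (the paper's Case~(1)) is to single out the residue class $2$, not the smallest vertex.
\begin{itemize}
\item If $1\notin W$, then $W$ has no vertex $\equiv 1\pmod t$, since these are $1$, $n-1$ and the elements of $N(0)$.
\item So a vertex $y\equiv 2\pmod t$ of $W$ has as neighbours in $W$ only vertices $\equiv 3\pmod t$ lying after it. Hence for $y_1<y_2$ both $\equiv 2$ you get $N_W(y_2)\subseteq N_W(y_1)$. Lemma~\ref{lema:BasicsHomology}(c) then lets you delete $y_1$, so you may assume there is at most one such $y$.
\item Now $N(1)$ is exactly the set of vertices $\equiv 2\pmod t$ other than $2$. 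Therefore $W$ (if no such $y$ exists), or both $W\setminus\{y\}$ and $W\setminus N[y]$, are induced subgraphs of $\GAsin\setminus(N[0]\cup N[1]\cup\{2,-1\})$.
\item Lemma~\ref{lema:Inductivo} identifies these graphs with links of $0$ in subgraphs of ${\rm GA}(t-1,k)'$ that avoid both $\phi(2)=1$ and $-1$. The first bullet for $t-1$ then gives vanishing in degrees $\ge t-3$ for both the deletion and the link.
\item Mayer--Vietoris at $y$ yields $\widetilde H_j(\Delta(W))=0$ for all $j\ge t-2$.
\end{itemize}
No secondary induction on $|V(W)|$ is needed.
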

\begin{proof}
    We proceed by induction on $t \geq 2$. Assume first that $t = 2$ and consider $H<{\rm GA}(2,k)'=K_{k,k}'$ such that $0\in V(H)$ and $2k-1\notin V(H)$. We distinguish two possibilities for $\lk_{\Delta(H)}(0)$. On the one hand, if $1\notin V(H)$, then it could happen that $H\setminus N[0]$ is empty (and $\widetilde{H}_j(\{\emptyset\})=0$ for all $j\geq 0$) or that $H\setminus N[0]$ is constituted by isolated vertices (which implies that its independence complex is acyclic). On the other hand, if $1\in V(H)$, then $H\setminus N[0]$ is a star graph centered at 1 and, by Lemma \ref{lema:HomologiaStarGraph}, this graph satisfies that $\widetilde{H}_j(\lk_{\Delta(H)}(0))=0$ for all $j\geq 1$.

         Assume now that the result is true for certain $t \geq 2$, and let us prove it for $t+1$.
        Let $H<{\rm GA}(t+1,k)'$, and assume without loss of generality that $x = 0\in V(H)$ and $-1 \notin V(H)$ (note that $(t+1) (k-1) + 1 = -1 \in \mathbb Z_{(t+1)(k-1) + 2})$. 

        Let $H'=H\setminus N[0]$. Since $\Delta(H')=\Delta(H\setminus N[0])=\lk_{\Delta(H)}(0)$,  we want to prove  that:
        \begin{itemize}
        \item if $1 \notin V(H),$ then $\widetilde{H}_j(\Delta(H'))=0$ for all $j \geq t-1$, and
        \item if $1 \in V(H),$ then $\widetilde{H}_j(\Delta(H'))=0$ for all $j \geq t$.
        \end{itemize}

\smallskip

\noindent \underline{Case (1)}: $1 \notin V(H)$. There are two possibilities:

\smallskip \noindent
    \underline{Case (1.1)}: There are no vertices in $V(H)$ congruent to 2 modulo $t+1$. Then $H'\leq{\rm GA}(t+1,k)' \setminus (N[0] \cup N[1] \cup \{-1,\, 2\})$. Hence, by Lemma \ref{lema:Inductivo} we have that $H'$ is isomorphic to an induced subgraph of $\GAsin \setminus (N[0] \cup \{-1,\, 1\})$. In other words, $H'$ is isomorphic to $K \setminus N[0]$, where $K\leq\GAsin\setminus \{-1,\,1\}$.        Consequently, $\widetilde{H}_j(\Delta(H'))\cong\widetilde{H}_j(\lk_{\Delta(K)}(0))$, and by the induction hypothesis this last reduced homology group equals 0 for all $j\geq t-2$. 
        
\smallskip \noindent
      \underline{Case (1.2)}: There are vertices in $V(H)$ congruent to 2 modulo $t+1$. Assume that there at least two such vertices, $y_1 < y_2$. Since in $H'$ there are no vertices congruent to 1 modulo $t+1$, then $N(y_2)\subseteq N(y_1)$ and, by Lemma \ref{lema:BasicsHomology}.(c), we can remove $y_1$ from $H'$ preserving homologies. Thus, we may assume without loss of generality that there is only one vertex $y$ congruent to 2 modulo $t+1$ in $V(H')$. To analyze the homologies of $\Delta(H')$, we are going to study $\del_{\Delta(H')}(y)$ and $\lk_{\Delta(H')}(y)$ and use Mayer-Vietoris sequence.

        By construction, $H'' := H'\setminus\{y\}\leq{\rm GA}(t+1,k)' \setminus (N[0] \cup N[1] \cup \{-1,\, 2\})$. Hence, by Lemma \ref{lema:Inductivo} we have that $H''\cong K\setminus N[0]$, where $K\leq\GAsin\setminus\{-1,\,1\}$. Thus,
        \[\widetilde{H}_j(\del_{\Delta(H')}(y)) \cong \widetilde{H}_j(\Delta(H'\setminus\{y\}))\cong \widetilde{H}_j(\Delta(H''))\cong\widetilde{H}_j(\Delta(K\setminus N[0]))\cong\widetilde{H}_j(\lk_{\Delta(K)}(0)),\] which equals 0 for all $j\geq t-2$ by the induction hypothesis. Hence, by Corollary \ref{cor:TruquitoMayerVietoris}, we deduce that \begin{equation}\label{eq:1} \widetilde{H}_{j+1}(\Delta(H'))\cong \widetilde{H}_j(\lk_{\Delta(H')}(y))\cong\widetilde{H}_j(\Delta(H'\setminus N[y])) \end{equation} for all $j\geq t-2$, where $H'\setminus N[y]\leq{\rm GA}(t+1,k)' \setminus (N[0] \cup N[1] \cup \{-1,\, 2\})$. Again by Lemma \ref{lema:Inductivo},  $H'\setminus N[y]$ is isomorphic to $K' \setminus N[0]$, with $K'\leq\GAsin\setminus\{-1,\,1\}$.  Therefore \begin{equation}\label{eq:2} \widetilde{H}_j(\Delta(H'\setminus N[y]))\cong \widetilde{H}_j(\Delta(K'\setminus N[0]))\cong\widetilde{H}_j(\lk_{\Delta(K')}(0)),\end{equation} where the last reduced homology group is 0 for all $j\geq t-2$ by the induction hypothesis. 

        From \eqref{eq:1} and \eqref{eq:2}, we conclude that $\widetilde{H}_j(\Delta(H'))=0$ for all $j\geq t-1$.

\medskip

\noindent \underline{Case (2)}: $1\in V(H)$. Let $H''=H'\setminus N[1] = H \setminus (N[0] \cup N[1])$. We distinguish two possibilities:

\smallskip \noindent
\underline{Case (2.1)}: If $2\in V(H)$, then by Lemma \ref{lema:Inductivo} $H''\cong K\setminus N[0]$, with $K\leq\GAsin\setminus\{-1\}$ and $1\in V(K)$. Hence,  \[ \widetilde{H}_j(\lk_{\Delta(H')}(1))\cong \widetilde{H}_j(\Delta(H'')) \cong \widetilde{H}_j(\lk_{\Delta(K)}(0)),\] and this is 0 for all $j\geq t-1$ by the induction hypothesis.

    This implies, by Corollary \ref{cor:TruquitoMayerVietoris}, that \[\widetilde{H}_j(\Delta(H'))\cong\widetilde{H}_j(\del_{\Delta(H')}(1))\cong\widetilde{H}_j(\Delta(H'\setminus\{1\}))\ {\rm for \ all\ }  j\geq t,\] where $H'\setminus\{1\}\leq{\rm GA}(t+1,k)' \setminus (N[0]  \cup \{-1,\, 1\})$.

    By \underline{Case (1)}, $\widetilde{H}_j(\Delta(H'\setminus\{1\}))=0$ for all $j\geq t-1$ and this allows us to conclude that $\widetilde{H}_j(\Delta(H'))=0$ for all $j\geq t$.
            
\smallskip \noindent
\underline{Case (2.2)}: If $2\notin V(H)$, proceeding as in \underline{Case (2.1}) we conclude that $\widetilde{H}_j(\Delta(H'))=0$ for all $j\geq t-1$. 
\end{proof}

We now use this result to show that almost all of the entries in the $(t+2)$-th row of the Betti diagram of $I(\GAsin)$ are equal to 0. After this, we will see that there is a nonzero entry in this row, and putting everything together we will conclude the value of the regularity of $I(\GAsin)$.

\begin{prop}\label{prop:UltimaFilaPropio}
    $\beta_{i,i+j}(I(\GAsin))=0$ for all $i+j \leq n-1$ and all $j\geq t+2$.
\end{prop}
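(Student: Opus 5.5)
By Hochster's formula (Proposition \ref{prop:HochsterRothVanTuyl}), it suffices to show that $\widetilde{H}_{j-2}(\Delta(H))=0$ for every \emph{proper} induced subgraph $H<\GAsin$ and every $j\geq t+2$. Equivalently, we must show that $\widetilde{H}_m(\Delta(H))=0$ for all $m\geq t$. The case $t=1$ is the known result for complements of cycles \cite{FRG09}. Alternatively, it can be checked directly: a proper induced subgraph of $K_{k+1}$ minus a Hamiltonian cycle has independence complex an induced subgraph of a path, hence homology only in degree $0$ at most. So we assume $t\geq 2$, which is what makes Lemma \ref{lema:HomologiaLinkSinCiclo} available.

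Since $H$ is proper, some vertex of $\GAsin$ is missing from $V(H)$. By vertex transitivity (rotation of $\mathbb Z_n$, which preserves the exterior cycle and hence $\GAsin$), we may assume $-1\notin V(H)$. If $V(H)$ is empty the statement is trivial. Otherwise, let $x$ be the first vertex of $H$ after a missing vertex, i.e.\ $x\in V(H)$ with $x-1\notin V(H)$. We then induct on $|V(H)|$, applying Corollary \ref{cor:TruquitoMayerVietoris} to $\Delta=\Delta(H)$ at the vertex $x$.

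\textbf{Isolated $x$.} If $\lk_{\Delta(H)}(x)=\{\emptyset\}$, then $x$ is adjacent to everything else in $H$. Triangle-freeness makes $H$ a star (or a single vertex), so Lemma \ref{lema:HomologiaStarGraph} gives vanishing in degrees $\geq 1$, and $t\geq 2$ suffices.

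\textbf{Nontrivial link.} Otherwise, Lemma \ref{lema:HomologiaLinkSinCiclo} gives $\widetilde{H}_m(\lk_{\Delta(H)}(x))=0$ for all $m\geq t-1$; this is the weaker of its two cases. The first item of Corollary \ref{cor:TruquitoMayerVietoris} with $r=t-1$ then yields
\[\widetilde{H}_m(\Delta(H))\cong \widetilde{H}_m(\del_{\Delta(H)}(x))=\widetilde{H}_m(\Delta(H\setminus\{x\}))\quad\text{for all } m\geq t.\]
Now $H\setminus\{x\}$ is again a proper induced subgraph with fewer vertices. By induction, whose base case is the empty graph with $\Delta=\{\emptyset\}$ and no homology in degrees $\geq 0$, its homology in degrees $\geq t$ vanishes. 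Note that $H\setminus\{x\}$ still misses a vertex, namely $x$ itself, so the hypothesis of Lemma \ref{lema:HomologiaLinkSinCiclo} can be met again at the next step by choosing $x+1$ if present, or another vertex following a gap.

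The main obstacle is already absorbed in Lemma \ref{lema:HomologiaLinkSinCiclo}; the remaining care is bookkeeping. We must check that the hypothesis ``$x-1\notin V(H)$'' can always be arranged; it can, since $H$ is proper and nonempty, so some vertex of $H$ follows a non-vertex. We must also handle the empty-link case separately, as above. Properness is essential here: for $H=\GAsin$ there is no such $x$, which is consistent with the single nonzero entry $\beta_{n-t-2,n}=1$ in Theorem \ref{thm:lastline}.
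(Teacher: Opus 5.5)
Your proof is correct and is essentially the paper's argument: induction on $|V(W)|$, choose $x$ with $x-1\notin V(W)$, dispose of the star case with Lemma~\ref{lema:HomologiaStarGraph}, and otherwise combine Lemma~\ref{lema:HomologiaLinkSinCiclo} with Corollary~\ref{cor:TruquitoMayerVietoris} to pass to $\Delta(W\setminus\{x\})$. Your separate treatment of $t=1$ is a useful addition: the paper's proof leaves that case implicit, even though Lemma~\ref{lema:HomologiaLinkSinCiclo} is only stated for $t\geq 2$.
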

\begin{proof}
    By Proposition \ref{prop:HochsterRothVanTuyl}, it suffices to prove that $\dim_{\mathbb K}\widetilde{H}_j(\Delta(W))=0$ for every $j\geq t$ and every $W<\GAsin$.
    
    We proceed by induction on the number of vertices of $W$. If $|V(W)| = 1$, then the result clearly follows.
    Assume $|V(W)| > 1$, and take $x\in V(W)$ such that $x-1\notin V(W)$ (such a vertex exists because $W$ is a proper induced subgraph). 
    If $V(W) = N[x],$ then $W$ is a star graph and the result follows by Lemma \ref{lema:HomologiaStarGraph}. Otherwise, we consider $W\setminus N[x]$. By Lemma \ref{lema:HomologiaLinkSinCiclo}, we have that \[ \widetilde{H}_j(\Delta(W\setminus N[x])) \cong \widetilde{H}_j(\lk_{\Delta(W)}(x)) \cong 0 \ \text{for all } j \geq t-1.\] This implies, by Corollary \ref{cor:TruquitoMayerVietoris}, that $\widetilde{H}_j(\Delta(W))\cong\widetilde{H}_j(\del_{\Delta(W)}(x))$ for all $j\geq t$. Since $\del_{\Delta(W)}(x) = \Delta(W\setminus\{ x\})$, then the result follows from the inductive hypothesis.
\end{proof}

By Proposition \ref{prop:HochsterRothVanTuyl}, we know that $\beta_{i,i+j}(I(\GAsin))=0$ whenever $i+j> n$. In Proposition \ref{prop: UltimaDiagonalNoNula} we analyze what happens for $i+j=n$, and in the proof of this result we repeatedly use the following:

\begin{lem}\label{lema:Borrado}
    Let $W<\GAsin$, and let $x$ be a vertex of $W$ such that $x+1\notin V(W)$ but $x-t\in V(W)$. Then, $N_W(x-t)\subseteq N_W(x)$ and $\widetilde{H}_j(\Delta(W))\cong \widetilde{H}_j(\Delta(W\setminus\{x\}))$ for all $j\geq -1$.
\end{lem}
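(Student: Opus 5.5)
The plan is to verify the neighborhood containment directly from the circulant description of $\GAsin$ and then apply Lemma \ref{lema:BasicsHomology}.(c) with $u = x-t$ and $v = x$. All arithmetic is in $\mathbb Z_n$ with $n = t(k-1)+2$, and I write $S = \{1+jt \mid 1\leq j\leq k-2\}$ for the connection set of $\GAsin$.

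First I check that $S$ is symmetric. Indeed $-(1+jt) = n-1-jt = 1+(k-1-j)t$, and $j\mapsto k-1-j$ permutes $\{1,\dots,k-2\}$. Hence the neighbors of a vertex $z$ in $\GAsin$ are exactly $z+s$ and $z-s$ for $s\in S$.

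Next I list the neighbors of $u = x-t$ in both directions.
\begin{itemize}
\item Forward: $x-t+1+jt = x+1+(j-1)t$ for $j=1,\dots,k-2$, that is, $x+1+j't$ with $j'\in\{0,\dots,k-3\}$. The value $j'=0$ gives $x+1$, which is not in $V(W)$ by hypothesis. For $j'\in\{1,\dots,k-3\}$ we have $1+j't\in S$, so $x+1+j't$ is a neighbor of $x$.
\item Backward: $x-t-(1+jt) = x-(1+(j+1)t)$ with $j+1\in\{2,\dots,k-1\}$. The value $j+1=k-1$ gives $x-(1+(k-1)t) = x-(n-1) = x+1$, again not in $V(W)$. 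For $j+1\in\{2,\dots,k-2\}$ we have $1+(j+1)t\in S$, so $x-(1+(j+1)t)$ is a neighbor of $x$.
\end{itemize}
Since $W$ is an induced subgraph, $N_W(z) = N(z)\cap V(W)$. The two cases above therefore give $N_W(x-t)\subseteq N_W(x)$.

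To finish, I apply Lemma \ref{lema:BasicsHomology}.(c) to $W$ with $u=x-t$ and $v=x$, both of which lie in $V(W)$. This yields $\widetilde{H}_j(\Delta(W))\cong\widetilde{H}_j(\Delta(W\setminus\{x\}))$ for all $j\geq -1$. The vertices $u$ and $v$ are distinct because $t\not\equiv 0 \pmod n$.

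The only delicate point is the wrap-around term $x-(n-1)=x+1$ in the backward direction. It is easy to overlook, and it is exactly why the hypothesis $x+1\notin V(W)$ is needed in both directions. Beyond that the argument is a routine index check.
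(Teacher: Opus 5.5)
Your proof is correct and is essentially the paper's argument. The paper simply asserts $N_{\GAsin}(x-t)\setminus N_{\GAsin}(x)=\{x+1\}$ and applies Lemma~\ref{lema:BasicsHomology}.(c), whereas you verify the needed containment explicitly from the connection set. Since you showed $S=-S$, your forward and backward lists are the same neighbor set, so one enumeration suffices; the wrap-around term $x+1$ is the same vertex seen twice, not a second place where the hypothesis $x+1\notin V(W)$ is used.
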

\begin{proof}
    We observe that $N_{\GAsin}(x-t) \setminus N_{\GAsin}(x) = \{x+1\}$. Hence, if $x,\, x-t \in V(W)$ and $x+1\notin V(W)$, we derive that $N_W(x-t)\subseteq N_W(x)$. By Lemma \ref{lema:BasicsHomology}.(c) we conclude that $\widetilde{H}_j(\Delta(W))\cong \widetilde{H}_j(\Delta(W\setminus\{x\}))$ for all $j\geq -1$.
    \end{proof}

\begin{prop}\label{prop: UltimaDiagonalNoNula}
$\beta_{i,n}(I(\GAsin))=\begin{cases}
    1 & if\ i=n-t-2,\ and\\
    0 & otherwise.
\end{cases}$
\end{prop}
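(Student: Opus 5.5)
By Proposition \ref{prop:HochsterRothVanTuyl}, the only induced subgraph with $n$ vertices is $\GAsin$ itself. Hence $\beta_{i,n}(I(\GAsin)) = \dim_{\mathbb K}\widetilde{H}_{n-i-2}(\Delta(\GAsin))$, and the statement is equivalent to
\[\widetilde{H}_j(\Delta(\GAsin))\cong\mathbb K \text{ if } j=t, \quad \widetilde{H}_j(\Delta(\GAsin))=0 \text{ otherwise}.\]
I would compute this homology by repeatedly deleting vertices with Lemma \ref{lema:Borrado}. The goal is to reduce $\GAsin$, without changing homology, to a small recognizable graph, ideally one whose independence complex is that of $(t+1)\cdot K_2$ up to acyclic or star pieces. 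Lemma \ref{lema:HomologiaPalitos} then gives homology $\mathbb K$ exactly in degree $t$.

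Lemma \ref{lema:Borrado} needs some $x+1$ to be absent, so I would first remove one vertex, say $n-1=-1$, by a single Mayer--Vietoris step (Theorem \ref{thm:MayerVietoris}) at that vertex.
\begin{itemize}
\item The deletion $\Delta(\GAsin\setminus\{-1\})$ is the independence complex of a proper induced subgraph. By the argument of Proposition \ref{prop:UltimaFilaPropio} its homology vanishes in degrees $\geq t$, and the same peeling should give more information in lower degrees.
\item The link is $\Delta(\GAsin\setminus N[-1])$.
\end{itemize}
Rather than juggling both terms, I would use Lemma \ref{lema:Borrado} directly on $W=\GAsin\setminus\{-1\}$ to peel vertices. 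With $x=-2$ we have $x+1=-1\notin V(W)$ and $x-t\in V(W)$, so $-2$ can be deleted. Then $-3$ can be deleted for the same reason, and so on backwards. This continues as long as $x-t$ is present, that is, down to $x=t$. The result is that $\Delta(W)$ has the homology of the independence complex on $\{0,1,\dots,t-1\}$. That set is independent in $\GAsin$, so the complex is a simplex and acyclic. Hence $\Delta(\GAsin\setminus\{-1\})$ is acyclic in all degrees, not just $\geq t$.

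The second bullet of Corollary \ref{cor:TruquitoMayerVietoris}, applied with $r=0$, now gives $\widetilde{H}_j(\Delta(\GAsin))\cong\widetilde{H}_{j-1}(\Delta(\GAsin\setminus N[-1]))$ for $j\geq 1$. In degree $0$, connectivity of $\GAsin$ for $k\geq4$ gives $\widetilde{H}_0=0$. The case $k=3$ is already settled by $(t+1)\cdot K_2$.

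It remains to compute $\Delta(\GAsin\setminus N[-1])$. In this graph the vertices $-1$ and $-2$ are gone, because $-2$ is not a neighbour of $-1$ unless $t=1$. The same backward peeling via Lemma \ref{lema:Borrado} applies, and I would alternate it with forward peeling using the symmetric version of the lemma ($x-1$ absent, $x+t$ present, obtained from the reflection $x\mapsto -x$, which is an automorphism). I expect the process to strip the graph down until one of two things happens.
\begin{itemize}
\item A degree-one configuration appears. Then Lemma \ref{lem:homologiagrado1} removes a closed neighbourhood and drops the degree by one.
\item The graph becomes isomorphic, via Lemma \ref{lema:Inductivo}, to the analogous graph for $t-1$.
\end{itemize}
This sets up an induction on $t$ with the claim $\widetilde{H}_j(\Delta(\GAsin\setminus N[-1]))\cong\mathbb K$ exactly for $j=t-1$. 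The base cases $t=1,2$ are the known complements of cycles and bipartite complements, or a direct star/matching computation.

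The main obstacle is making this last reduction precise: tracking exactly which vertices survive the peeling in $\GAsin\setminus N[-1]$ so that Lemma \ref{lema:Inductivo} applies cleanly. The delicate points are the boundary residues near $0$ and near $n-1$, where the lemma's hypothesis $x-t\in V(W)$ fails. If the direct identification fails, I would first try a Mayer--Vietoris step at vertex $1$, mirroring Case (2) of Lemma \ref{lema:HomologiaLinkSinCiclo}, and use that lemma's vanishing to isolate the single surviving class.
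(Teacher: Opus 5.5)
Your reduction to the link is correct and matches the paper's first step. Peeling $-2,-3,\dots,t$ off $\GAsin\setminus\{-1\}$ with Lemma \ref{lema:Borrado} leaves the independent set $\{0,\dots,t-1\}$. So the deletion is acyclic and $\widetilde{H}_j(\Delta(\GAsin))\cong\widetilde{H}_{j-1}(\Delta(\GAsin\setminus N[-1]))$. In degree $0$ you need no separate argument, because the tail $\widetilde{H}_0({\rm del})\to\widetilde{H}_0(\Delta(\GAsin))\to 0$ already forces $\widetilde{H}_0=0$. Connectivity of the graph itself would be the wrong criterion anyway: $\widetilde{H}_0(\Delta(G))$ detects connectivity of $G^c$, and stars are connected but have $\widetilde{H}_0\neq 0$.

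The gap is what comes after. The homology of the link, which is the real content of the proposition, is never computed, only anticipated, and the concrete steps you sketch do not work. In $\GAsin\setminus N[-1]$ the vertex $-2$ is still present, since consecutive vertices are never adjacent in $\GAsin$ for any $t$. Backward peeling also cannot start at $x=-2$: here $x-t=-t-2=t(k-2)$ lies in $N(-1)=\{t,2t,\dots,(k-2)t\}$ and has been removed.

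The missing observation is that the link already has a leaf, so no peeling is needed. Translate so that the removed vertex is $0$. Then $N(n-t)=\{1,t+1,\dots,(k-3)t+1\}$ while $N(0)=\{t+1,\dots,(k-2)t+1\}$, so in $G_0=\GAsin\setminus N[0]$ the vertex $n-t$ has $1$ as its only neighbour. The same computation shifted by $i$ shows that in $G_i=\GAsin\setminus(N[0]\cup\cdots\cup N[i])$ the vertex $n-t+i$ has $i+1$ as its only neighbour, for $0\le i\le t-2$. Hence $t-1$ applications of Lemma \ref{lem:homologiagrado1} give $\widetilde{H}_{j-1}(\Delta(G_0))\cong\widetilde{H}_{j-t}(\Delta(G_{t-1}))$. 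Since $G_{t-1}$ is the single edge on $\{t,n-1\}$, Lemma \ref{lema:HomologiaPalitos} yields $\mathbb K$ exactly for $j=t$. This is the paper's argument.

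Your induction on $t$ also works once this leaf is identified. One application of Lemma \ref{lem:homologiagrado1} at $n-t$, followed by Lemma \ref{lema:Inductivo}, gives $\widetilde{H}_j(\Delta(\GAsin\setminus N[0]))\cong\widetilde{H}_{j-1}(\Delta({\rm GA}(t-1,k)'\setminus N[0]))$ for $t\ge 2$. The base case is ${\rm GA}(1,k)'\setminus N[0]$, which is the single edge $\{1,k\}$. Without one of these explicit identifications, your proposal stops at the reduction to the link.
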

\begin{proof}
    By Proposition \ref{prop:HochsterRothVanTuyl}, we have to prove that \[\dim_{\mathbb K}\widetilde{H}_j(\Delta(\GAsin))=\begin{cases}
        1\ {\rm if} \ j=t,\ {\rm and}\\
        0 \ {\rm otherwise}.
    \end{cases}\]

    For every $v\in V(\GAsin)$, we claim that $\del_{\Delta(\GAsin)}(v)=\Delta(\GAsin\setminus\{v\})$ is acyclic. Indeed, by Lemma \ref{lema:Borrado}, $N_{\GAsin\setminus\{v\}}(v-t-1)\subseteq N_{\GAsin\setminus\{v\}}(v-1)$ and, by  Lemma \ref{lema:BasicsHomology}.(c), we have that 
    \[\widetilde{H}_j(\Delta(\GAsin\setminus\{v\}))\cong\widetilde{H}_j(\Delta(\GAsin\setminus\{v,v-1\})) \,\text{ for all } j \geq -1.\]  Iterating this idea, we derive that
    \[\widetilde{H}_j(\del_{\Delta(\GAsin)}(v))\cong \widetilde{H}_j(\Delta(W))\, \text{ for all } j \geq -1,\] where
    $W$ is the induced subgraph with vertices $V(W) = \{v+1,\,v+2,\dots,\,v+1+t\}$. Since $W$ is an independent set, by Lemma \ref{lema:BasicsHomology}.(a) we deduce that $\del_{\Delta(\GAsin)}(v)$ is acyclic.

    Hence, by Corollary \ref{cor:TruquitoMayerVietoris}, \[ \widetilde{H}_j(\Delta(\GAsin))\cong\widetilde{H}_{j-1}(\lk_{\Delta(\GAsin)}(v))\, \text{ for all } j\geq 0.\]

    Assume now without loss of generality that $v=0$ and denote $G_0 := \GAsin\setminus N[0]$, so \[\widetilde{H}_j(\Delta(\GAsin))\cong\widetilde{H}_{j-1}(\Delta(G_0)) \text{ for all } j\geq 0.\]

    One has that $N_{G_0}(n-t)=\{1\}$, so denoting $G_1 := G_0 \setminus N[1] = \GAsin\setminus (N[0] \cup N[1])$,   
    by Lemma~\ref{lem:homologiagrado1} we have that 
    \[ \widetilde{H}_{j}(\Delta(G_0)) \cong \widetilde{H}_{j-1}(\Delta(G_1)) \text{ for all } j \geq 0.\] 

    If we iterate this process for all $i \in \{1,\ldots,t-2\}$ denoting $G_{i+1}:=G_{i}\setminus N[i+1]=\GAsin\setminus(N[0]\cup\cdots\cup N[i+1])$, then $N_{G_i}(n-t+i)=\{i+1\}$ and  
    by Lemma \ref{lem:homologiagrado1} we have that 
    \[ \widetilde{H}_{j}(\Delta(G_{i})) \cong \widetilde{H}_{j-1}(\Delta(G_{i+1})) \text{ for all } j \geq 0. \]

Finally, we have that $G_{t-1}$ is the induced subgraph on the vertices $\{t,n-1\}$, i.e., $G_{t-1} \cong K_{2}$. Consequently, by Lemma \ref{lema:HomologiaPalitos}, \[\widetilde{H}_j(\Delta(\GAsin))\cong\widetilde{H}_{j-t}(\Delta(K_{2}))\cong\begin{cases}
          \mathbb K & {\rm if}\ j=t,\ {\rm and}\\
          0 & {\rm otherwise} .\end{cases}\]\end{proof}
    
Now we can easily prove the main result of this section.
\medskip

\noindent {\it Proof of Theorem \ref{thm:lastline}.}  The result directly follows from Propositions \ref{prop:UltimaFilaPropio} and \ref{prop: UltimaDiagonalNoNula}.\hfill\qedsymbol

\medskip

In the next section we study the rest of the Betti diagram of $I(\GAsin)$ and, in particular, we determine its projective dimension.

\section{Projective dimension} \label{sec:ProjectiveDimension}

In this section, we prove in Lemma \ref{lema:diagn-1} that $\beta_{i,n-1}(I(\GAsin)) = 0$ for all $i \geq 0$; note that these entries belong to the same diagonal of the Betti diagram of $I(\GAsin)$. After that, in Lemma \ref{lema:Quitar2SinCiclo}, we compute the only nonzero entry in the diagonal $i+j=n-2$. Then we use Alexander duality to determine, in Theorem \ref{thm:pd}, the projective dimension of these edge ideals. 

Before proceeding with the main results of this section, we introduce some preliminary ones.

Let $W<\GAsin$ and denote by $0\leq x_1<x_2<\dots<x_m < n$ the vertices of $V(\GAsin) \setminus V(W)$. For every $i\in\{1,2,\dots,m-1\}$, the {\em interval} between $x_i$ and $x_{i+1}$ is just the set of vertices which are larger than $x_i$ and smaller than $x_{i+1}$. In addition, the {\em interval} between $x_m$ and $x_1$ is defined as the union of the set of vertices which are larger than $x_m$ and the set of vertices which are smaller than $x_1$. The {\em length} of an interval $I$, denoted ${\rm length}(I)$, is the number of vertices in it. Furthermore, we say that two intervals are {\em consecutive} if the intervals between them have all length 0.

\begin{ex}
    Consider ${\rm GA}(3,6)'$, which is represented in Figure \ref{fig:GA(3,6)}, and let $W={\rm GA}(3,6)'\setminus\{3,4,7,12\}$.

 The intervals of $W$ are $I_1 = \emptyset$ between 3 and 4, $I_2 = \{5,6\}$ between 4 and 7, $I_3 = \{8,9,10,11\}$ between 7 and 12, and $I_4 = \{13,14,15,16,0,1,2\}$ between 12 and 3. The lengths of this intervals are ${\rm length}(I_1) = 0,\ {\rm length}(I_2) = 2,\ {\rm length}(I_3) = 4$ and ${\rm length}(I_4) = 7$. We say that $I_4$ and $I_2$ are consecutive, since $I_1$ is the only interval between them and it has length 0.
 \end{ex}

The following lemma reduces the problem of describing $\widetilde{H}_i(\Delta(W))$, with $W<\GAsin$, to the case where all intervals have length at most $t$.

\begin{lem}\label{lema:intervaloslargos}Let $W < \GAsin$ with an interval $I = \{u+1,\ldots,u+s\}$ of length $s > t$. Then
\[ \widetilde{H}_j(\Delta(W)) \cong \widetilde{H}_j(\Delta(W \setminus \{u+t+1,\ldots,u+s\})) \text{ for all $j \geq -1$.} \]
\end{lem}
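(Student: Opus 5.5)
The plan is to peel off the vertices $u+s, u+s-1, \dots, u+t+1$ one at a time, from the right end of the interval, each time invoking Lemma \ref{lema:Borrado}. Throughout, indices are read in $\mathbb Z_n$. The interval may wrap around $0$, but this causes no trouble: the neighbourhood identity $N_{\GAsin}(x-t)\setminus N_{\GAsin}(x)=\{x+1\}$ used in Lemma \ref{lema:Borrado} holds for every $x\in\mathbb Z_n$, because $\GAsin$ is a circulant graph. Alternatively, one can first translate so that the interval does not wrap.

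By definition of an interval, the vertices $u$ and $u+s+1$ are not in $V(W)$, while $u+1,\dots,u+s$ all belong to $V(W)$. I will prove the statement by induction on $s-t\geq 1$, or equivalently by a finite iteration.

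\emph{First step.} Set $x=u+s\in V(W)$. Then $x+1=u+s+1\notin V(W)$. Since $s>t$, we have $x-t=u+s-t\in\{u+1,\dots,u+s-1\}\subseteq V(W)$. Lemma \ref{lema:Borrado} therefore applies to $W$ and $x$, and gives
\[
\widetilde H_j(\Delta(W))\cong\widetilde H_j(\Delta(W\setminus\{u+s\}))\quad\text{for all } j\geq -1.
\]

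\emph{Iteration.} The graph $W_1:=W\setminus\{u+s\}$ is again a proper induced subgraph of $\GAsin$. In $W_1$, the interval starting at $u+1$ is now $\{u+1,\dots,u+s-1\}$, of length $s-1$, and $u+s\notin V(W_1)$. If $s-1>t$, we repeat the same argument with $x=u+s-1$. In general, at the step that removes $x=u+r$ with $t+1\leq r\leq s$, we have $x+1=u+r+1\notin V(W_{s-r})$, because it was removed at the previous step (or, when $r=s$, it was never in $W$). We also have $x-t=u+r-t\geq u+1$, so $x-t$ is still present. The last removal is $x=u+t+1$, whose partner $x-t=u+1$ lies in $W$. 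Composing these isomorphisms yields the claimed isomorphism with $\Delta(W\setminus\{u+t+1,\dots,u+s\})$ for all $j\geq -1$.

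There is no real obstacle here. The only points needing care are two. First, the removals must be done from the right end inward, so that the hypothesis $x+1\notin V(W)$ of Lemma \ref{lema:Borrado} is created by the previous deletion. Second, the vertex $x-t$ must never have been deleted. This holds because we stop at $u+t+1$, so every $x-t$ that is used lies in $\{u+1,\dots,u+s-t\}$, a range that is never deleted.
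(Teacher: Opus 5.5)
Your proposal is correct and matches the paper's proof: both delete $u+s, u+s-1, \dots, u+t+1$ from the right end of the interval, each step applying Lemma \ref{lema:Borrado} with partner $x-t$. Your check that each partner $x-t$ lies in $\{u+1,\dots,u+s-t\}$, and so is never deleted, is the right justification for the iteration.
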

\begin{proof}Since $u+s \in W,\ u+s+1 \notin W$ and $u+s-t \in W$, then by Lemma \ref{lema:Borrado} we have that $N(u+s-t) \subseteq N(u+s)$ and 
\[ \widetilde{H}_j(\Delta(W)) \cong \widetilde{H}_j(\Delta(W \setminus \{u+s\})) \text{ for all } j \geq -1. \]
Iterating this idea with $u+s-1,\ldots,u+t+1$, we get the result. 
\end{proof}

And the following lemma allows to remove intervals with less than $t$ elements having a consecutive interval with at least $t$ elements. 
\begin{lem}\label{lema:BorrarIntervalosPequeños}
    Let $W < \GAsin$ with two consecutive intervals $I_1$ and $I_2$ such that ${\rm length}(I_1) \geq t$ and ${\rm length}(I_2) < t$. Then,
    
    \[ \widetilde{H}_j(\Delta(W)) \cong\widetilde{H}_j(\Delta(W \setminus I_2)) \text{ for all $j \geq -1$.} \]
\end{lem}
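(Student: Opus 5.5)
The plan is to delete the vertices of $I_2$ one at a time, each time invoking Lemma \ref{lema:BasicsHomology}.(c). For each deletion we exhibit a vertex $y$ of $I_1$ with $N_W(y)\subseteq N_W(x)$, where $x$ is the vertex of $I_2$ being removed. This generalizes Lemma \ref{lema:Borrado}, where the dominating vertex was $x-t$. The map $z\mapsto -z$ is an automorphism of $\GAsin$ that reverses the cyclic order, so it reverses the order of the intervals. We may therefore assume that $I_2$ comes after $I_1$ in the cyclic order. Concretely, write $I_1=\{u+1,\dots,u+s\}$ with $s\ge t$ and $I_2=\{w+1,\dots,w+r\}$ with $r<t$. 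The vertices $u+s+1,\dots,w$ (at least one of them) and $w+r+1$ are not in $W$, because the intervals are consecutive.

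\textbf{Step 1: a neighbourhood computation.} Using \eqref{eq:GAsin} and $n=t(k-1)+2$, I would compute, for $1\le m\le k-2$,
\[ N_{\GAsin}(x-mt)\setminus N_{\GAsin}(x)\subseteq\{x+1,\ x+1-t,\ \dots,\ x+1-(m-1)t\}. \]
The key identity is $x-1-(k-1+\ell)t\equiv x+1-\ell t \pmod n$. This yields the stated set of differences, and the case $m=1$ recovers Lemma \ref{lema:Borrado}.

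\textbf{Step 2: choosing the dominating vertex.} Let $x=w+r$ be the last vertex of $I_2$, so $x+1\notin V(W)$. Since $\mathrm{length}(I_1)\ge t$, the interval $I_1$ contains a vertex congruent to $x$ modulo $t$. Let $y=x-mt$ be the largest such vertex; then $y\ge u+s-t+1$. The vertices that could spoil domination are $x+1-\ell t$ for $1\le\ell\le m-1$. Each of them is at least $y+1+t>u+s$, so it lies beyond $I_1$. Each is also at most $x+1-t=w+r+1-t\le w$, using $r<t$, so it lies in the gap between $I_1$ and $I_2$. Hence none of them belongs to $W$. By Step 1 we get $N_W(y)\subseteq N_W(x)$, and Lemma \ref{lema:BasicsHomology}.(c) gives $\widetilde H_j(\Delta(W))\cong\widetilde H_j(\Delta(W\setminus\{x\}))$ for all $j\ge -1$.

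\textbf{Step 3: iteration.} After removing $x$, the vertex $w+r-1$ becomes the last vertex of the shortened interval, and its successor is no longer in $W$. The same argument applies with $r$ replaced by $r-1$, and we repeat until $I_2$ has been removed entirely.

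\textbf{Main obstacle.} The delicate part is Step 1 together with the wrap-around modulo $n$. One must check that $m\le k-2$, i.e.\ that $y$ is reached without passing through $x$ the long way around. One must also confirm that the excluded edges $\{z,z\pm1\}$ do not create extra differences. I would first verify the computation directly for $t=1,2$, and only then write out the general index bookkeeping.
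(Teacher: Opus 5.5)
Your proposal is correct and follows the paper's proof: the last vertex of $I_2$ is dominated inside $W$ by the vertex of $I_1$ in the same residue class modulo $t$, it is deleted via Lemma~\ref{lema:BasicsHomology}.(c), and one iterates; the paper differs only in first trimming $I_1$ to length $t$ with Lemma~\ref{lema:intervaloslargos}, and in leaving implicit the neighbourhood check that your Step 1 makes explicit. One correction: the bound $m\le k-2$ that you list as an obstacle is not needed and is not always true ($m=k-1$ occurs when $y=x+2$), but your computation gives $N(x-mt)\setminus N(x)\subseteq\{x+1-\ell t : 0\le \ell\le m-1\}$ for every $m\ge 1$, so Step 2 is unaffected.
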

\begin{proof}
By Lemma \ref{lema:intervaloslargos} we may assume that ${\rm length}(I_1) = t$ and we can write $I_1 = \{0,\ldots,t-1\}$ and $I_2 = \{x+1,\ldots,x+s\}$, with $s = {\rm length}(I_2) < t$. 
Consider $y := x+s\ {\rm mod}\ t$; we have that $y \in I_1$ and, since $I_1$ and $I_2$ are consecutive intervals and $x+s+1 \notin V(W)$, we have that $N(y) \subseteq N(x+s)$. Hence, by Lemma \ref{lema:BasicsHomology}.(c), \[ \widetilde{H}_j(\Delta(W)) \cong \widetilde{H}_j(\Delta(W \setminus \{x+s\})) \text{ for all $j \geq -1$.} \]
Iterating the same argument with $x + s - 1,\ldots,x+1$, the result is proved.
\end{proof}

\begin{prop}\label{prop:lIntervalosLongMayorIgualt}
    Let $W < \GAsin$ with $\ell\geq1$ intervals of length at least $t$. Then \[
    \widetilde{H}_j(\Delta(W))\cong\begin{cases}
    \mathbb K^{\ell-1} & if\  j=t-1 , \ and\\
        0 & otherwise,
    \end{cases}
    \] \noindent where $\mathbb K^0\cong 0$ by convention.
\end{prop}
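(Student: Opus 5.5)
The plan is to first reduce $W$ to a normal form consisting of $\ell$ blocks of exactly $t$ consecutive vertices, and then induct on $\ell$ using the Mayer--Vietoris sequence (Theorem \ref{thm:MayerVietoris}).

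\emph{Reduction.} By Lemma \ref{lema:intervaloslargos} applied to each interval of length $>t$, I may assume every long interval has length exactly $t$; this does not change the number $\ell$ of long intervals. Next, I get rid of the short intervals, i.e.\ those of length between $1$ and $t-1$. Go around the cycle starting from a long interval $I_1$. The next interval of positive length is consecutive to $I_1$. If it is short, Lemma \ref{lema:BorrarIntervalosPequeños} deletes it without changing homology, and the long interval becomes consecutive to the following nonempty interval. Iterating this sweep, every short interval eventually disappears. So I may assume $W$ is a disjoint union of $\ell$ blocks $\{u_i+1,\dots,u_i+t\}$, with each $u_i\notin V(W)$. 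Each block is independent, since differences inside a block lie in $\{1,\dots,t-1\}$ and the only such difference $\equiv 1 \pmod t$ is the forbidden $1$. Each block also contains exactly one vertex of each residue class mod $t$.

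\emph{Induction on $\ell$.} For $\ell=1$, $W$ is a nonempty independent set, so $\Delta(W)$ is a simplex and hence acyclic, which matches $\mathbb K^0$. For $\ell\ge 2$ (and $t\geq 2$; the case $t=1$ is the complement-of-a-cycle situation and can be checked directly, or handled by the same argument with the degenerate indices), take $x=u_i+1$, the first vertex of some block. Then $x-1\notin V(W)$ and $x+1\in V(W)$, so Lemma \ref{lema:HomologiaLinkSinCiclo} gives $\widetilde H_j(\lk_{\Delta(W)}(x))=0$ for $j\ge t-1$. The deletion $\Delta(W\setminus\{x\})$ has the block shortened to $\{x+1,\dots,x+t\}$ of length $t-1$. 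This short interval is consecutive to the previous long block, because the intervals between them, namely the one between $u_i$ and $x$ and any empty ones, have length $0$. By Lemma \ref{lema:BorrarIntervalosPequeños} it can be removed, leaving a graph with $\ell-1$ long intervals. By induction, its homology is $\mathbb K^{\ell-2}$ in degree $t-1$ and $0$ otherwise. The Mayer--Vietoris sequence then reads
\[0\to \widetilde H_{t-1}(\del)\to \widetilde H_{t-1}(\Delta(W))\to \widetilde H_{t-2}(\lk)\to \widetilde H_{t-2}(\del)=0,\]
and gives $\widetilde H_j(\Delta(W))\cong \widetilde H_{j-1}(\lk)$ for $j\ne t-1$ (for $j\geq t$ this vanishes by Lemma \ref{lema:HomologiaLinkSinCiclo}). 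Hence the statement follows once I show that
\[\widetilde H_j(\Delta(W\setminus N[x]))\cong \mathbb K \text{ if } j=t-2, \text{ and } 0 \text{ otherwise}.\]

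\emph{Main obstacle: the link.} The step I expect to be hardest is computing $\widetilde H_*(\Delta(W\setminus N[x]))$. I would imitate the proof of Proposition \ref{prop: UltimaDiagonalNoNula}. Removing $N[x]$ kills, in every other block, the vertex of residue $x+1$ lying at admissible distance. Using the block structure, I then look for a vertex of degree one, e.g.\ the remaining vertex of residue $x+1$ in the block of $x$, or the appropriate vertex of a neighbouring block, whose unique neighbour is $x+1$. Lemma \ref{lem:homologiagrado1} then shifts homology down by one and removes $N[x+1]$. Iterating along $x+1,\dots,x+t-1$ should use $t-1$ shifts. It should leave either a single $K_2$, giving a total shift of $t-2$ plus $\widetilde H_0(\Delta(K_2))=\mathbb K$ by Lemma \ref{lema:HomologiaPalitos}, or a configuration with isolated vertices, which is acyclic by Lemma \ref{lema:BasicsHomology}(a). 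I would use Lemmas \ref{lema:Borrado} and \ref{lema:BasicsHomology}(c) to discard dominated vertices in the other blocks along the way. The delicate point is to verify, via the adjacency rule \eqref{eq:GAsin}, that the chosen degree-one vertex exists at each stage. It must also be checked that the outcome is exactly one $K_2$, independently of how the $\ell$ blocks are spaced around $\mathbb Z_n$. For this it may be convenient to choose the block of $x$ to be the one immediately preceding another block, so that the relevant vertex pairs are at distance $\geq 2$.
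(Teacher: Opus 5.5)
Your outline matches the paper's proof. It uses the same normal form of $\ell$ blocks of exactly $t$ consecutive vertices, induction on $\ell$, and Mayer--Vietoris at the first vertex $x$ of a block. The deletion step is also the same: the shortened block (which is $\{x+1,\dots,x+t-1\}$, not $\{x+1,\dots,x+t\}$) is removed and induction applies. Your reduction of the inductive step is correct: it suffices that $\widetilde H_j(\Delta(W\setminus N[x]))$ is $\mathbb K$ for $j=t-2$ and $0$ otherwise.

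The gap is that this claim, which is the real content of the inductive step, is never proved, and your sketch of it does not work as written.
\begin{itemize}
\item The vertex of residue $x+1$ in the block of $x$ is $x+1$ itself. In $W\setminus N[x]$ it is adjacent to the vertex $\equiv x+2 \pmod t$ of every other block, so it has degree $\ell-1$, not $1$.
\item The vertex $\equiv x+2$ of an arbitrary other block $B'$ is adjacent to $x+1$. It is also adjacent to the vertex $\equiv x+3$ of every block lying between $B'$ and the block of $x$ in increasing cyclic order. So ``a neighbouring block'' must be pinned down. Choosing the block of $x$ cleverly does not help, since every block precedes another; what matters is the block the leaf comes from.
\item You also mix $t-1$ shifts with a total shift of $t-2$.
\item You leave open an acyclic outcome. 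That outcome would give $\widetilde H_{t-1}(\Delta(W))\cong\mathbb K^{\ell-2}$ and contradict the statement, so the proof is unfinished.
\end{itemize}

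Here is what is needed; it is the paper's argument. Rotate so that $x=0$. Then the block of $x$ is $\{0,\dots,t-1\}$, $t,n-1\notin V(W)$, and every other block lies in $[t+1,n-2]$. Let $B$ be the block containing the largest vertices of $W$, i.e.\ the block cyclically preceding $\{0,\dots,t-1\}$.
\begin{itemize}
\item For $0\le i\le t-2$, $N(i)$ is exactly the set of vertices $\equiv i+1 \pmod t$ in $[t+1,n-2]$.
\item For $2\le s\le t-1$, let $b_s\in B$ be the vertex $\equiv s\pmod t$. By \eqref{eq:GAsin}, its neighbours below it are $\equiv s-1$ and at distance $\ge t+1$, and its neighbours above it are $\equiv s+1$ and at distance $\ge t+1$. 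No vertex of $W$ lies above $b_s$ at distance $\ge t+1$, because everything above it lies in $B$.
\item Hence $b_s$ has $s-1$ as its unique neighbour in $W\setminus(N[0]\cup\dots\cup N[s-2])$.
\item So $t-2$ applications of Lemma \ref{lem:homologiagrado1} give $\widetilde H_j(\lk_{\Delta(W)}(0))\cong \widetilde H_{j-(t-2)}(\Delta(K_{1,\ell-1}))$. The star is centred at $t-1$, and its leaves are the vertices $\equiv 0 \pmod t$ of the other $\ell-1$ blocks.
\end{itemize}
Lemma \ref{lema:HomologiaStarGraph} then gives $\mathbb K$ exactly in degree $t-2$, so no acyclic case occurs, and your Mayer--Vietoris argument completes the proof.
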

\begin{proof}
Since $\ell \geq 1$, by Lemma \ref{lema:BorrarIntervalosPequeños} we can assume that there are no intervals of length $\leq t-1$. Also, by Lemma \ref{lema:intervaloslargos}, we can assume there are exactly $\ell$ intervals of length exactly $t$.

We prove the result by induction on $\ell$. If $\ell = 1$, then $W$ is an independent set and the result follows from Lemma \ref{lema:BasicsHomology}.(a).
Take now $\ell + 1 \geq 2$ and denote the intervals of $W$ as $I_i=\{v_{i,0},\dots,v_{i,t-1}\}$ for all $i\in\{1,\dots,\ell+1\}$, where $v_{1,0} < v_{2,0} < \ldots < v_{\ell+1,0}$ and $v_{i,s}$ is the only element in $I_i$ which is congruent to $s$ modulo $t$. For simplicity, we assume that $I_1 = \{0,\ldots,t-1\}.$

We will study the homologies of ${\rm link}_{\Delta(W)}(0)$ and ${\rm del}_{\Delta(W)}(0)$ and then use Mayer-Vietoris sequence to derive the result.

We first study $\widetilde{H}_j({\rm link}_{\Delta(W)}(0))$ taking into account that ${\rm link}_{\Delta(W)}(0)\cong\Delta(W\setminus N[0])$. We observe that the only neighbor of $v_{\ell+1,2}$ in $W \setminus N[0]$ is $1$. By Lemma \ref{lem:homologiagrado1}, we have that \[ \widetilde{H}_{j}(\Delta(W\setminus N[0])) \cong  \widetilde{H}_{j-1}(\Delta(W\setminus (N[0] \cup N[1]))) \text{ for all } j \geq 0. \]

If we iterate this process for $v_{\ell+1,s}$ with $s \in \{3,\ldots,t-1\}$, we get that 
    \[ \widetilde{H}_{j}(\Delta(W \setminus N[0])) \cong \widetilde{H}_{j-(t-2)}(\Delta(W \setminus (N[0] \cup \cdots \cup N[t-2]))) \text{ for all } j \geq 0. \]

Finally, we have that $V(W) \setminus (N[0] \cup \cdots \cup N[t-2])) = \{t-1,v_{2,0},\ldots,v_{\ell+1,0}\}$  , i.e., $W \setminus (N[0] \cup \cdots \cup N[t-2])) \cong K_{1,\ell}$. Consequently, by Lemma \ref{lema:HomologiaStarGraph}, \[\widetilde{H}_j({\rm link}_{\Delta(W)}(0))\cong\widetilde{H}_{j-(t-2)}(\Delta(K_{1,\ell}))\cong\begin{cases}
          \mathbb K & {\rm if}\ j=t-2,\ {\rm and}\\
          0 & {\rm otherwise} .\end{cases}\]

We now study $\del_{\Delta(W)}(0)=\Delta(W\setminus\{0\})$. In $W \setminus \{0\}$ there is an interval of length $t-1$ and $\ell\geq 1$ intervals of length $t$. Then, by Lemma \ref{lema:BorrarIntervalosPequeños} we know that $\widetilde{H}_{j}(\del_{\Delta(W)}(0)) \cong \widetilde{H}_{j}(\Delta(W \setminus I_1))$ and, by the inductive hypothesis,  \[
    \widetilde{H}_j(\del_{\Delta(W)}(0))\cong\begin{cases}
    \mathbb K^{\ell-1} & \text{if}\  j=t-1 , \ \text{and}\\
        0 & \text{otherwise}.
    \end{cases}
    \]

By the Mayer-Vietoris sequence, $\widetilde{H}_j(\Delta(W))\cong\widetilde{H}_j(\del_{\Delta(W)}(0)) \cong 0$ for all $j\notin\{t-2,\,t-1\}$. Let us study now the cases $j = t-2$ and $j = t-1$. Mayer-Vietoris sequence can be written as follows: 

\[
\begin{aligned}
\cdots\; \to\; 
\underbrace{\widetilde{H}_t(\Delta(W))}_{ \cong\  0}\
&\to&
\underbrace{\widetilde{H}_{t-1}\!\bigl(\operatorname{link}_{\Delta(W)}(0)\bigr)}_{ \cong\  0} 
&\to&
\underbrace{\widetilde{H}_{t-1}\!\bigl(\operatorname{del}_{\Delta(W)}(0)\bigr)}_{\cong\  \mathbb K^{\ell-1}}
&\to&
\widetilde{H}_{t-1}(\Delta(W))
&\to
\\
&\to&
\underbrace{\widetilde{H}_{t-2}\!\bigl(\operatorname{link}_{\Delta(W)}(0)\bigr)}_{\cong \ \mathbb K}
&\to&
\underbrace{\widetilde{H}_{t-2}\!\bigl(\operatorname{del}_{\Delta(W)}(0)\bigr)}_{\cong \ 0}
&\to&
\widetilde{H}_{t-2}(\Delta(W))
&\to
\\
&\to&
\underbrace{\widetilde{H}_{t-3}\!\bigl(\operatorname{link}_{\Delta(W)}(0)\bigr)}_{\cong \ 0}
&\to&
\underbrace{\widetilde{H}_{t-3}\!\bigl(\operatorname{del}_{\Delta(W)}(0)\bigr)}_{\cong \ 0}
&\to&
\underbrace{\widetilde{H}_{t-3}(\Delta(W))}_{\cong \ 0}
&\to\;   0
\end{aligned}
\]

 \noindent and it allows us to conclude the following:
\begin{itemize}
    \item $\widetilde H_{t-2}(\Delta(W))\cong0$.

    \item There is an exact sequence $0\rightarrow \mathbb K^{\ell-1}\rightarrow\widetilde{H}_{t-1}(\Delta(W))\rightarrow\mathbb K\rightarrow 0$, so 
     $\widetilde{H}_{t-1}(\Delta(W))\cong \mathbb K^{\ell}$.\end{itemize}\end{proof}

Now we can proceed with the main results of this section.

\begin{lem}\label{lema:diagn-1}
    $\beta_{i,\,n-1}(I(\GAsin))=0$ for all $i\geq 0$.
\end{lem}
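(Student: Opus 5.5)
By Proposition \ref{prop:HochsterRothVanTuyl}, $\beta_{i,n-1}(I(\GAsin))$ is a sum of terms $\dim_{\mathbb K}\widetilde{H}_{j-2}(\Delta(W))$ with $i+j=n-1$, where $W$ runs over the induced subgraphs with $n-1$ vertices. Each such $W$ has the form $W=\GAsin\setminus\{v\}$ for some vertex $v$. So it suffices to show that $\Delta(\GAsin\setminus\{v\})$ is acyclic for every $v\in V(\GAsin)$.

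This acyclicity was in fact already established inside the proof of Proposition \ref{prop: UltimaDiagonalNoNula}, and the plan is to reproduce that argument.

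1. By vertex transitivity, take $v=0$. Then $W=\GAsin\setminus\{0\}$ has a single removed vertex, hence a single interval $\{1,\dots,n-1\}$, of length $n-1\ge t$.
2. Apply Lemma \ref{lema:intervaloslargos}. This replaces $W$ by the induced subgraph on $\{1,\dots,t\}$ without changing $\widetilde{H}_j$ for any $j\geq -1$. The lemma's hypothesis is $s=n-1>t$, which holds since $n=t(k-1)+2>t+1$ for $k\ge2$.
3. The set $\{1,\dots,t\}$ is independent in $\GAsin$, since any two of its elements differ by less than $t$ and by at least $1$. The only candidate difference congruent to $1$ modulo $t$ is $1$ itself, and the exterior edges have been removed. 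Hence this induced subgraph consists of isolated vertices, and Lemma \ref{lema:BasicsHomology}.(a) gives acyclicity.

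Equivalently, one could invoke Proposition \ref{prop:lIntervalosLongMayorIgualt} with $\ell=1$, which yields $\widetilde{H}_j(\Delta(W))\cong\mathbb K^{0}=0$ for all $j$.

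Summing over all $W$, every homology contribution vanishes, so $\beta_{i,n-1}=0$ for all $i$.

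The only delicate points are small-parameter edge cases.
- If $k=2$, the graph has no edges and every $\beta$ vanishes trivially.
- If $t=1$, the interval reduction still applies: it reduces to a single vertex, which is trivially acyclic as an isolated vertex.

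I expect no real obstacle beyond checking that the hypotheses of Lemma \ref{lema:intervaloslargos} hold uniformly, in particular that $0-1=n-1$ is treated cyclically so that the single interval is the whole of $V(W)$.
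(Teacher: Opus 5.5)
Your proof is correct and is essentially the paper's. The paper observes that each $W$ with $n-1$ vertices has a single interval of length $n-1\ge t$ and applies Proposition \ref{prop:lIntervalosLongMayorIgualt} with $\ell=1$, which is exactly your stated alternative; your steps 1--3 simply unroll the $\ell=1$ case of that proposition, and the same deletion argument also appears in the proof of Proposition \ref{prop: UltimaDiagonalNoNula}.
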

\begin{proof}
    By Proposition \ref{prop:HochsterRothVanTuyl}, if suffices to show that $\widetilde{H}_j(\Delta(W))=0$ for every $j\geq0$ and every $W < \GAsin$ with $|V(W)|=n-1$. If $W < \GAsin$ has $n-1$ vertices, then it has a unique interval of length $n-1 \geq t$ and the result follows by Proposition \ref{prop:lIntervalosLongMayorIgualt}. \end{proof}

\begin{lem}\label{lema:Quitar2SinCiclo}
    $\beta_{n-t-3,\,n-2}(I(\GAsin))=\frac{n}{2}\,(t\,(k-3)+1)$ and $\beta_{i,\,n-2}(I(\GAsin))=0$ for every $i\neq n-t-3$.
\end{lem}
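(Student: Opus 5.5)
By Proposition \ref{prop:HochsterRothVanTuyl}, $\beta_{i,n-2}(I(\GAsin))=\sum_{W}\dim_{\mathbb K}\widetilde{H}_{j-2}(\Delta(W))$ with $i+j=n-2$. The sum runs over the $\binom{n}{2}$ induced subgraphs $W=\GAsin\setminus\{a,b\}$ obtained by deleting two vertices. Such a $W$ has exactly two intervals, of lengths $s_1+s_2=n-2=t(k-1)$. We split according to these lengths.

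\textbf{Case 1: at least one interval has length $\geq t$.} If exactly one interval has length $\geq t$, then $\ell=1$ in Proposition \ref{prop:lIntervalosLongMayorIgualt}, so $\Delta(W)$ is acyclic. If both have length $\geq t$, that proposition gives $\widetilde{H}_j(\Delta(W))\cong\mathbb K$ for $j=t-1$ and $0$ otherwise. This homology lies in $\widetilde{H}_{j-2}$ with $j-2=t-1$, that is $j=t+1$ and $i=n-2-(t+1)=n-t-3$, as the statement predicts.

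Since $k\geq 3$ gives $n-2=t(k-1)\geq 2t$, at least one interval always has length $\geq t$. So Case 1 is the only case, and there is no leftover case with both intervals of length $<t$. (For $k=3$ we have $s_1+s_2=2t$, so both intervals have length $\geq t$ only when $s_1=s_2=t$; this is consistent.)

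\textbf{Counting.} It therefore remains to count the unordered pairs $\{a,b\}$ with both $s_1\geq t$ and $s_2\geq t$. Fix $a$ and write $b=a+d$ with $1\le d\le n-1$. The two interval lengths are $d-1$ and $n-d-1$, and both are $\geq t$ exactly when $t+1\le d\le n-t-1$. That gives $n-2t-1=t(k-3)+1$ choices of $d$ for each of the $n$ choices of $a$, and each unordered pair is counted twice. Hence there are $\frac{n}{2}(t(k-3)+1)$ such $W$, each contributing $1$ to $\beta_{n-t-3,n-2}$ and nothing elsewhere. This yields the formula, and also $\beta_{i,n-2}=0$ for $i\neq n-t-3$.

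The step needing most care is checking that Proposition \ref{prop:lIntervalosLongMayorIgualt} really applies with $\ell$ counting intervals of length $\geq t$, including the boundary case of length exactly $t$. Its hypotheses require $W<\GAsin$ proper and $\ell\geq1$; both clearly hold here. One should also track the index shift between $\widetilde{H}_{t-1}$ and the Betti position $(i,i+j)=(n-t-3,n-2)$, and check the case $t=1$ separately. For $t=1$ every interval of length $\geq 1$ counts, and the count $\frac{n}{2}(k-2)$ matches the number of nonadjacent-in-cycle pairs at distance $\geq 2$.
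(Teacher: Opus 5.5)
Your proof is correct and follows the paper's argument: Hochster's formula, the split into two-interval cases settled by Proposition~\ref{prop:lIntervalosLongMayorIgualt}, and a count of the vertex pairs whose two intervals both have length at least $t$. You also make explicit two points the paper leaves implicit: the inequality $n-2=t(k-1)\geq 2t$ guarantees $\ell\geq 1$ when one interval is short, and the count $\frac{n}{2}(t(k-3)+1)$ comes from the admissible gaps $d\in\{t+1,\dots,n-t-1\}$.
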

\begin{proof}
    By Proposition \ref{prop:HochsterRothVanTuyl}, we know that \begin{equation} \label{eq:n-2} \beta_{i,\,n-2}(I(\GAsin))=\sum_{\substack{W < \GAsin\\|V(W)|=n-2}}\dim_{\mathbb K}\widetilde{H}_{n-i-4}(\Delta(W))\, .\end{equation}

    Let $W < \GAsin$ with $|V(W)|=n-2$, which has two intervals. If one of these intervals has length $\leq t-1$, then by Proposition \ref{prop:lIntervalosLongMayorIgualt} $\widetilde{H}_{j}(\Delta(W)) \cong 0$ for all $j \geq -1$. If both intervals have length $\geq t$, again by Proposition \ref{prop:lIntervalosLongMayorIgualt} we have that
    \[\widetilde{H}_j(\Delta(W)) \cong\begin{cases}
        \mathbb K & {\rm if}\ j=t-1, \ {\rm and}\\
        0 & {\rm otherwise}.
    \end{cases}\]

    This expression together with \eqref{eq:n-2} allow us to conclude that $\beta_{i,\,n-2}(I(\GAsin)) = 0$  whenever $n-i-4\neq t-1$ or, equivalently, $i\neq n-t-3$, and
    \begin{eqnarray*}
    \beta_{n-t-3,\,n-2}&=& \#\{\{x_1,\,x_2\}\,|\, {\rm the\ length \ of \ the\ two\ intervals\ between}\ x_1\ {\rm and}\ x_2\ {\rm is} \geq t\}=\\
        & =& \frac{n}{2}\,(t\,(k-3)+1)\,.
       \end{eqnarray*}\end{proof}

If $W <\GAsin$ with $|V(W)|\geq n -k+1$, then $W$ has at least one interval of length $\geq t$. Thus, by Proposition \ref{prop:lIntervalosLongMayorIgualt}, we deduce that $\widetilde{H}_j(\Delta(W))=0$ for every $j\neq t-1$. Consequently, using Hochster's formula we get the following:

 \begin{prop}\label{prop:CerosEncimaDel1}
     $\beta_{i,i+j}(I(\GAsin))=0$ for every $n-k < i+j  < n$ with $j\neq t+1$.
 \end{prop}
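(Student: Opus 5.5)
The plan is to apply Hochster's formula (Proposition \ref{prop:HochsterRothVanTuyl}) and reduce the statement to a vanishing claim for the homology of induced subgraphs. The formula gives
\[
\beta_{i,i+j}(I(\GAsin))=\sum_{\substack{W\leq \GAsin\\ |V(W)|=i+j}}\dim_{\mathbb K}\widetilde{H}_{j-2}(\Delta(W)).
\]
It therefore suffices to show that $\widetilde{H}_{j-2}(\Delta(W))=0$ whenever $n-k<|V(W)|<n$ and $j\neq t+1$. Since $|V(W)|<n$, every such $W$ is a proper induced subgraph of $\GAsin$, so the interval machinery of this section applies to it.

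The key step is a counting argument showing that such a $W$ has at least one interval of length $\geq t$. Let $m:=n-|V(W)|$ be the number of removed vertices. The hypothesis $|V(W)|>n-k$ gives $1\leq m\leq k-1$. The $m$ intervals of $W$ partition $V(W)$, so their lengths sum to $|V(W)|=n-m$. Suppose, for contradiction, that every interval had length at most $t-1$. Then $n-m\leq m(t-1)$, that is, $n\leq mt$. But
\[
mt\leq (k-1)t=n-2<n,
\]
a contradiction. Hence at least one interval has length $\geq t$, so the number $\ell$ of such intervals satisfies $\ell\geq 1$.

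Now Proposition \ref{prop:lIntervalosLongMayorIgualt} applies and gives $\widetilde{H}_{s}(\Delta(W))=0$ for every $s\neq t-1$. Taking $s=j-2$, this means the homology can be nonzero only when $j-2=t-1$, i.e.\ $j=t+1$. Consequently every summand in Hochster's formula vanishes when $j\neq t+1$, which gives $\beta_{i,i+j}(I(\GAsin))=0$ for all $n-k<i+j<n$ with $j\neq t+1$.

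The only step with real content is the counting inequality, and the main thing to check there is the boundary case $m=k-1$. It works because $n=t(k-1)+2$ exceeds $(k-1)t$ by exactly $2$. I would also confirm that the small case $k=3$, where $\GAsin$ is a disjoint union of edges, is covered: there the interval lemmas still hold as stated, so no separate treatment should be needed.
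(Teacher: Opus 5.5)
Your proof is correct and is essentially the paper's argument: Hochster's formula, the existence of an interval of length at least $t$ whenever $|V(W)|\geq n-k+1$, and then Proposition \ref{prop:lIntervalosLongMayorIgualt}. The paper only asserts that such an interval exists, so your explicit count ($n-m\leq m(t-1)$ forces $n\leq mt\leq (k-1)t=n-2$) fills in the one step it leaves to the reader.
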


The previous result together with Proposition \ref{prop: UltimaDiagonalNoNula} yield the following:

 \begin{prop}\label{prop:PrimeroPdAndrasfaiSin}
     If $t\leq k+1$, then ${\rm pd}(I(\GAsin))=n-t-2$.
 \end{prop}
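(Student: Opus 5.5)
The plan is to obtain the lower bound from the corner entry and the upper bound by a column-by-column vanishing argument, splitting the Betti diagram into antidiagonals $i+j=\text{const}$. For the lower bound, Proposition~\ref{prop: UltimaDiagonalNoNula} gives $\beta_{n-t-2,\,n}(I(\GAsin))=1\neq 0$, so ${\rm pd}(I(\GAsin))\geq n-t-2$. For the upper bound, I will show that $\beta_{i,i+j}(I(\GAsin))=0$ for every $i\geq n-t-1$ and every $j\geq 2$; rows $j<2$ are empty for an edge ideal.

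Fix such an $i$ and $j$ and set $d=i+j$. I will split according to the value of $d$.
\begin{itemize}
\item If $d>n$, the entry vanishes by Proposition~\ref{prop:HochsterRothVanTuyl}.
\item If $d=n$, Proposition~\ref{prop: UltimaDiagonalNoNula} says the only nonzero entry on this antidiagonal sits in column $n-t-2<i$.
\item If $n-k<d<n$, Proposition~\ref{prop:CerosEncimaDel1} forces $j=t+1$ for a nonzero entry. Then $i=d-t-1\leq n-1-t-1=n-t-2$, contradicting $i\geq n-t-1$.
\item If $d\leq n-k$, I will separate the linear strand from the higher rows.
\end{itemize}

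In the last case, when $j=2$ Proposition~\ref{prop:LinearStrandAndrasfaiSinCiclo} gives $\beta_{i,i+2}=0$ for $i>k-3$. It then suffices to check $k-3\leq n-t-2$, which holds since $n-t-2=t(k-2)$ and $k\geq 3$. When $j\geq 3$, we get $i=d-j\leq n-k-3$, and $n-k-3\leq n-t-2$ is exactly the hypothesis $t\leq k+1$.

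Combining the four ranges of $d$ shows that every column beyond $n-t-2$ is zero. Together with the lower bound this gives ${\rm pd}(I(\GAsin))=n-t-2$. I would also record that $n-t-2=t(k-2)$, which matches the announced value of the projective dimension.

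The only delicate point is the region $d\leq n-k$, where nothing is known about the rows $3\leq j\leq t+1$. This is precisely where the hypothesis $t\leq k+1$ is used: it guarantees that the whole region lies in columns at most $n-t-2$. The linear strand has to be treated separately to avoid a loss when $t=k+1$. I would double-check the boundary case $t=k+1$, $j=3$, $d=n-k$ to make sure the inequality $n-k-3\leq n-t-2$ is not off by one.
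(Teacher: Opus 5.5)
Your proof is correct and follows the paper's route: the paper obtains this proposition directly from Proposition~\ref{prop:CerosEncimaDel1} and Proposition~\ref{prop: UltimaDiagonalNoNula}, exactly as in your cases $d=n$ and $n-k<d<n$. Your separate linear-strand step is a genuine addition rather than a detour. When $t=k+1$, the entry $\beta_{n-k-2,\,n-k}$ sits in column $n-t-1$ on the antidiagonal $i+j=n-k$, which those two propositions do not cover, and Proposition~\ref{prop:LinearStrandAndrasfaiSinCiclo} (applicable since $t=k+1\geq 4$) is what makes it vanish.
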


 In the forthcoming Theorem \ref{thm:pd} we are going to show that we can drop the assumption of $t\leq k+1$ in the previous result. As  $\beta_{n-t-2,\,n} = 1$, we already have that ${\rm pd}(I(\GAsin)) \geq n-t-2$. So to prove Theorem \ref{thm:pd} we just need to prove that ${\rm pd}(I(\GAsin)) \leq n-t-2$ whenever $t > k+1$. To do that we first introduce the concept of Alexander duality and some related results.

\begin{defi}
    Given a simplicial complex $\Delta$, we define the {\rm Alexander dual} of $\Delta$ as the simplicial complex $\Delta^{\vee}$ whose faces are the complements of the non-faces of $\Delta$, i.e. $\Delta^{\vee}=\{F^c\,|\, F\notin\Delta\}$. 
    
    The {\rm Alexander dual} of the Stanley-Reisner ideal $I_{\Delta}$ is the Stanley-Reisner ideal $I_{\Delta}^{\vee}:=I_{\Delta^{\vee}}$.
\end{defi}

Our motivation for using Alexander duality comes from the following result by Bayer, Charalambous and Popescu, which relates the regularity of a Stanley-Reisner ideal to the projective dimension of its Alexander dual.

\begin{prop}[\hspace{-0.0001mm}\cite{BCP99}, Corollary 2.9]\label{prop:RegPdDual}
    Given a simplicial complex $\Delta$, ${\rm reg}(I_{\Delta})={\rm pd}(I_{\Delta^{\vee}})+1$.
\end{prop}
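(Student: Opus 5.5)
The plan is to compute both sides of the equality from Hochster-type formulas and to show that both are controlled by the same homological degree. On the left, Hochster's formula (in the form of Proposition~\ref{prop:HochsterRothVanTuyl}, valid for any squarefree monomial ideal) gives $\beta_{i,i+j}(I_\Delta)=\sum_{|W|=i+j}\dim\widetilde H_{j-2}(\Delta_W)$. Hence
\[{\rm reg}(I_\Delta)=d_{\max}+2,\qquad d_{\max}:=\max\{d \,:\, \widetilde H_d(\Delta_W)\neq 0 \text{ for some } W\subseteq V\}.\]

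On the right, I will again apply Hochster's formula, now to $I_{\Delta^\vee}$. This gives $\beta_{i,\sigma}(I_{\Delta^\vee})=\dim\widetilde H_{|\sigma|-i-2}((\Delta^\vee)_\sigma)$. Next I will check directly from the definitions that the Alexander dual of $(\Delta^\vee)_\sigma$, taken inside the ground set $\sigma$, is $\lk_\Delta(\sigma^c)$; this is nonvoid exactly when $\sigma^c\in\Delta$. Combinatorial Alexander duality, $\widetilde H_j(\Gamma)\cong\widetilde H^{|\sigma|-j-3}(\Gamma^\vee)$, together with universal coefficients over the field $\mathbb K$, then yields the Eagon--Reiner form
\[\beta_{i,\sigma}(I_{\Delta^\vee})=\dim\widetilde H_{i-1}(\lk_\Delta(\sigma^c)).\]
Consequently ${\rm pd}(I_{\Delta^\vee})=m_{\max}+1$, where $m_{\max}$ is the largest $m$ with $\widetilde H_m(\lk_\Delta F)\neq0$ for some face $F\in\Delta$. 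The proposition is therefore equivalent to the purely combinatorial statement $d_{\max}=m_{\max}$. The index bookkeeping in this duality step is routine but must be done carefully, in particular the conventions for $\{\emptyset\}$ and the void complex.

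For $d_{\max}\le m_{\max}$, I would like to realize each $\Delta_W$ as a link, but $\lk_\Delta(\emptyset)=\Delta$, not $\Delta_W$. So I prove the stronger claim that, for every face $F$ and every $W\subseteq V\setminus F$, nonvanishing of $\widetilde H_m((\lk_\Delta F)_W)$ forces some $\widetilde H_{m'}(\Delta_U)\neq0$ with $m'\ge m$. I also prove the reverse comparison within the same framework; for the inequality $d_{\max}\le m_{\max}$ I expect a dual Mayer--Vietoris argument, peeling vertices off $W$ instead of off $F$. Both claims proceed by induction on $|F|$, using the identity $(\lk_\Delta F)_W=\lk_{\Delta_{W\cup F}}(F)$.

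The step I expect to be the main obstacle is the induction step for $m_{\max}\le d_{\max}$. Pick $v\in F$ and set $\Gamma=\lk_{\Delta_{W\cup F}}(F\setminus v)$, so that the link in question is $\lk_\Gamma(v)$. The Mayer--Vietoris sequence of Theorem~\ref{thm:MayerVietoris} gives
\[\widetilde H_{m+1}(\Gamma)\to\widetilde H_m(\lk_\Gamma v)\to\widetilde H_m(\del_\Gamma v).\]
If $\widetilde H_m(\lk_\Gamma v)\neq0$, then either $\widetilde H_{m+1}(\Gamma)\neq0$ or $\widetilde H_m(\del_\Gamma v)\neq0$. Both $\Gamma$ and $\del_\Gamma v$ are restrictions of the link of the smaller face $F\setminus v$, so the induction hypothesis applies to them. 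It produces an induced subcomplex of $\Delta$ with nonzero homology in degree $\ge m$, which gives $m_{\max}\le d_{\max}$.
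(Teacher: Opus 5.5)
Your reduction is correct, and it takes a different route from the one behind the statement. The paper gives no proof and quotes \cite{BCP99}, where the identity is deduced from a correspondence between the extremal Betti numbers of $I_\Delta$ and $I_{\Delta^\vee}$, built on estimates like Theorem~\ref{thm:BettiDual}(b). That route gives more, namely equality of the extremal Betti numbers themselves, while yours aims at a self-contained topological proof of the reg/pd identity alone. Concretely, ${\rm reg}(I_\Delta)=d_{\max}+2$ by Hochster. Your identification of the dual of $(\Delta^\vee)_\sigma$ inside $\sigma$ with $\lk_\Delta(\sigma^c)$ is right and reproduces Theorem~\ref{thm:BettiDual}(a). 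So ${\rm pd}(I_{\Delta^\vee})=m_{\max}+1$, and the statement is equivalent to $d_{\max}=m_{\max}$. Your induction on $|F|$ proving $m_{\max}\le d_{\max}$ is also correct: $\Gamma=(\lk_\Delta(F\setminus v))_{W\cup\{v\}}$, $\del_\Gamma v=(\lk_\Delta(F\setminus v))_W$, $\lk_\Gamma v=(\lk_\Delta F)_W$, and exactness at $\widetilde H_m(\lk_\Gamma v)$ does what you claim. The case $\lk_\Gamma v=\{\emptyset\}$, excluded in Theorem~\ref{thm:MayerVietoris}, is harmless with augmented chains.

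The gap is the other inequality, $d_{\max}\le m_{\max}$, which is never proved. The ``stronger claim'' you state under that heading only serves $m_{\max}\le d_{\max}$: for $F=\emptyset$ it is trivial, and for $W=V\setminus F$ it says link homology forces restriction homology. The suggested ``peeling vertices off $W$'' does not work as stated. Mayer--Vietoris for $\Delta_W$ at $w\in W$ gives $\widetilde H_d(\Delta_{W\setminus w})\to\widetilde H_d(\Delta_W)\to\widetilde H_{d-1}(\lk_{\Delta_W}w)$, so the link branch loses a degree. What works is to \emph{enlarge} $W$, by induction on $|V\setminus(F\cup W)|$ rather than on $|F|$. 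Claim: $\widetilde H_d((\lk_\Delta F)_W)\neq0$ forces $\widetilde H_d(\lk_\Delta G)\neq0$ for some face $G\supseteq F$. If $W=V\setminus F$ there is nothing to prove. Otherwise pick $u\notin F\cup W$ and put $\Gamma=(\lk_\Delta F)_{W\cup\{u\}}$. If $F\cup\{u\}\notin\Delta$, then $u$ is not a vertex of $\Gamma$, so $\Gamma=(\lk_\Delta F)_W$ and you pass to $(F,W\cup\{u\})$. Otherwise $\del_\Gamma u=(\lk_\Delta F)_W$ and $\lk_\Gamma u=(\lk_\Delta(F\cup\{u\}))_W$. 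Exactness of $\widetilde H_d(\lk_\Gamma u)\to\widetilde H_d(\del_\Gamma u)\to\widetilde H_d(\Gamma)$ in the middle then transfers the nonvanishing, in the same degree $d$, to $(F\cup\{u\},W)$ or to $(F,W\cup\{u\})$. Starting from $F=\emptyset$ this yields $d_{\max}\le m_{\max}$ and completes your argument.
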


This result, together with the facts that $(\Delta^{\vee})^{\vee}=\Delta$ for every simplicial complex $\Delta$ and the edge ideal of a graph $G$ is just the Stanley-Reisner ideal associated to the independence complex of $G$, tells us that the projective dimension of $I(\GAsin)$ is upper bounded by $t(k-2)$ if and only if ${\rm reg}(I(\GAsin)^{\vee})\leq t (k-2)+1$. To prove this we  need several auxiliary results concerning Betti numbers of the Alexander dual of a Stanley-Reisner ideal and joins of simplicial complexes.  Part (a) of this result is essentially included in \cite{ER98} (see also \cite[Theorem 1.5.23]{Jac04}), and part (b) is \cite[Corollary 2.7]{BCP99}. 

\begin{thm}\label{thm:BettiDual} 
Let $\Delta$ be a simplicial complex on $\{x_1,\dots,x_n\}$. For all $0\leq i\leq n-1$ and $m\geq i+1$, we have that
\begin{itemize}
\item[(a)] $\beta_{i,m}(I_{\Delta}^{\vee})=\sum_{\substack{F\in\Delta\\|F|=n-m}}\dim_{\mathbb K}\widetilde{H}_{i-1}({\rm link}_{\Delta}F)$, and
\item[(b)]  $\beta_{i,m}(I_{\Delta}^{\vee})\leq\sum_{k=0}^{n-m}{m+k\choose k} \beta_{m-i-1,m+k}(I_{\Delta})$.
\end{itemize}
\end{thm}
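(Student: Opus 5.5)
For part (a) I would apply Hochster's formula (the general form behind Proposition \ref{prop:HochsterRothVanTuyl}) to $\Delta^{\vee}$ and then use combinatorial Alexander duality inside each squarefree multidegree. Concretely, for $\sigma\subseteq\{x_1,\dots,x_n\}$ with $|\sigma|=m$, Hochster gives
\[\beta_{i,\sigma}(I_{\Delta^\vee})=\dim_{\mathbb K}\widetilde H_{m-i-2}\bigl((\Delta^\vee)_\sigma\bigr).\]
Put $F=\sigma^c$, so $|F|=n-m$. A subset $\tau\subseteq\sigma$ is a face of $\Delta^\vee$ exactly when $\tau^c=F\cup(\sigma\setminus\tau)\notin\Delta$.

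If $F\notin\Delta$, then $\sigma$ itself is a face of $\Delta^{\vee}$. Hence $(\Delta^\vee)_\sigma$ is a full simplex, it is acyclic, and such $\sigma$ contribute nothing.

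If $F\in\Delta$, then $\tau\in(\Delta^\vee)_\sigma$ if and only if $\sigma\setminus\tau\notin\lk_\Delta F$. In other words, $(\Delta^\vee)_\sigma$ is the Alexander dual of $\lk_\Delta F$ taken on the ground set $\sigma$ of size $m$.

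Combinatorial Alexander duality gives $\widetilde H_{j}(K^\vee)\cong \widetilde H^{m-j-3}(K)$ on an $m$-element ground set. Over a field, cohomology and homology have the same dimension. Therefore
\[\dim\widetilde H_{m-i-2}\bigl((\Delta^\vee)_\sigma\bigr)=\dim\widetilde H_{i-1}(\lk_\Delta F).\]
Summing over all $\sigma$ with $|\sigma|=m$, equivalently over all faces $F\in\Delta$ with $|F|=n-m$, yields (a). The only care needed is in the degenerate cases, for instance $\lk_\Delta F=\{\emptyset\}$ or $\emptyset$, where the usual conventions for $\widetilde H_{-1}$ must be checked to match.

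For part (b) I would start from (a) and bound each $\dim\widetilde H_{i-1}(\lk_\Delta F)$ by peeling off the vertices of $F$ one at a time with the Mayer--Vietoris sequence (Theorem \ref{thm:MayerVietoris}). Fix $v\in F$ and let $\Gamma=\lk_{\Delta_U}(F\setminus\{v\})$, where $U$ is the current ground set. Then $\lk_\Gamma(v)=\lk_{\Delta_U}(F)$ and $\del_\Gamma(v)=\lk_{\Delta_{U\setminus\{v\}}}(F\setminus\{v\})$. Exactness of
\[\widetilde H_{j+1}(\Gamma)\to\widetilde H_j(\lk_\Gamma v)\to \widetilde H_j(\del_\Gamma v)\]
gives
\[\dim\widetilde H_j(\lk_{\Delta_U}F)\le \dim\widetilde H_{j+1}\bigl(\lk_{\Delta_U}(F\setminus v)\bigr)+\dim \widetilde H_j\bigl(\lk_{\Delta_{U\setminus v}}(F\setminus v)\bigr).\]
So each vertex of $F$ is either kept in the ground set, at the cost of raising the homological degree by one, or deleted from the ground set.

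Iterating over all $n-m$ vertices of $F$ bounds $\dim\widetilde H_{i-1}(\lk_\Delta F)$ by a sum over subsets $K\subseteq F$ (the kept vertices, with $|K|=k$) of $\dim\widetilde H_{i-1+k}(\Delta_W)$, where $W=F^c\cup K$ and $|W|=m+k$. By Hochster's formula each such term contributes to
\[\beta_{|W|-(i-1+k)-2,\,|W|}(I_\Delta)=\beta_{m-i-1,\,m+k}(I_\Delta).\]

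Finally I sum over $F$ and exchange the order of summation. A pair $(F,K)$ is determined by $W$ together with the choice of $K\subseteq W$ of size $k$, since $F=W^c\cup K$. For a fixed $W$ of size $m+k$ this gives ${m+k\choose k}$ pairs. This produces exactly the right-hand side of (b).

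The main obstacle I expect is the bookkeeping in the iterated Mayer--Vietoris step. I must verify that the identification $\lk_\Gamma v=\lk_{\Delta_U}F$ is correct at every stage, and handle the hypothesis $\lk\neq\{\emptyset\}$ of Theorem \ref{thm:MayerVietoris}, where the inequality must be checked directly. I must also make sure that the degree shift accumulates to exactly $k$, so that the column index $m-i-1$ stays fixed.
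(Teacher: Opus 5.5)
Your proof is correct. The paper does not prove this theorem; it cites Eagon--Reiner for (a) and Bayer--Charalambous--Popescu (Corollary 2.7) for (b), and your argument is essentially the standard route behind both: Hochster's formula combined with combinatorial Alexander duality for ${\rm link}_\Delta F$ on the ground set $\sigma=F^c$ gives (a), and iterating the link/deletion Mayer--Vietoris inequality over the vertices of $F$, then regrouping the pairs $(F,K)$ by $W=F^c\cup K$, gives (b).

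Two small points to tighten:
\begin{itemize}
\item The degree $-1$ instances of the Mayer--Vietoris step, where ${\rm link}_\Gamma(v)=\{\emptyset\}$, are covered by working with augmented chain complexes, since the star of $v$ in $\Gamma$ is a cone.
\item For a fixed $W$, at most (not exactly) $\binom{m+k}{k}$ of the pairs $(F,K)$ have $F\in\Delta$. This is all the upper bound needs.
\end{itemize}
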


The following classical result describes the homologies of $\Delta_1*\Delta_2$ in terms of the individual homologies. 

\begin{thm}\label{thm:HomologyJoin} If $\Delta_1$ and $\Delta_2$ are two simplicial complexes, then for all $i$ $$\widetilde{H}_i(\Delta_1*\Delta_2)\cong\bigoplus_{a+b=i-1}\left(\widetilde{H}_a(\Delta_1)\otimes_{\mathbb K}\widetilde{H}_b(\Delta_2)\right)\,.$$ In particular, $\dim_{\mathbb K}(\widetilde{H}_i(\Delta_1*\Delta_2))=\sum_{a+b=i-1}\left(\dim_{\mathbb K}(\widetilde{H}_a(\Delta_1))\dim_{\mathbb K}(\widetilde{H}_b(\Delta_2))\right)$.
\end{thm}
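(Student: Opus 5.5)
The statement is the classical Künneth formula for joins. I would prove it at the level of augmented chain complexes over $\mathbb K$ and then apply the algebraic Künneth theorem.

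\textbf{Step 1: the chain-level isomorphism.} Let $\widetilde C_\bullet(\Delta)$ denote the augmented oriented chain complex of $\Delta$, with $\widetilde C_{-1}(\Delta)=\mathbb K$ spanned by the empty face. Fix a total order on $V(\Delta_1)\cup V(\Delta_2)$ in which every vertex of $\Delta_1$ precedes every vertex of $\Delta_2$. Every face of $\Delta_1*\Delta_2$ is uniquely $\sigma\cup\tau$ with $\sigma\in\Delta_1$ and $\tau\in\Delta_2$, and either may be empty. Its dimension is $|\sigma|+|\tau|-1=a+b+1$, where $a=\dim\sigma$ and $b=\dim\tau$. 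So I would define
\[
\Phi\colon \bigoplus_{a+b=i-1}\widetilde C_a(\Delta_1)\otimes_{\mathbb K}\widetilde C_b(\Delta_2)\longrightarrow \widetilde C_i(\Delta_1*\Delta_2),\qquad [\sigma]\otimes[\tau]\mapsto[\sigma\cup\tau],
\]
with simplices oriented by the fixed order. This map is a bijection on bases, so it is an isomorphism of graded vector spaces after the degree shift by $1$.

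\textbf{Step 2: compatibility with the differentials.} I would check that $\Phi$ intertwines the boundary $\partial$ of the join with the tensor differential
\[
\partial_1\otimes 1+(-1)^{a+1}\,1\otimes\partial_2 .
\]
Removing a vertex of $\sigma\cup\tau$ that lies in $\sigma$ gives exactly the terms of $\partial_1\sigma$. Removing a vertex of $\tau$ gives the terms of $\partial_2\tau$, each with an extra sign $(-1)^{|\sigma|}=(-1)^{a+1}$, since that vertex comes after all of $\sigma$ in the order. I expect this to be the main obstacle, because the cases where $\sigma$ or $\tau$ is empty must be handled with care. For example, with $\tau=\emptyset$ one needs $\partial_2(\emptyset)=0$, and for $\sigma$ a single vertex one needs the augmentation $\partial_1(\{v\})=\emptyset$ to match the term of $\partial(\sigma\cup\tau)$ that deletes $v$. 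I would verify these boundary cases directly.

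\textbf{Step 3: apply algebraic Künneth.} Once Step 2 holds, $\widetilde C_\bullet(\Delta_1*\Delta_2)$ is isomorphic to the tensor product complex $\widetilde C_\bullet(\Delta_1)\otimes_{\mathbb K}\widetilde C_\bullet(\Delta_2)$ shifted by one. Over a field the Tor terms vanish, so the algebraic Künneth theorem gives
\[
\widetilde H_i(\Delta_1*\Delta_2)\cong\bigoplus_{a+b=i-1}\widetilde H_a(\Delta_1)\otimes_{\mathbb K}\widetilde H_b(\Delta_2).
\]
The dimension formula follows by taking $\dim_{\mathbb K}$ of both sides, since the dimension of a tensor product of vector spaces is the product of the dimensions.
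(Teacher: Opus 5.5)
Your proof is correct. The paper gives no proof of this statement; it quotes it as a classical fact, namely the K\"unneth formula for joins. Your argument, identifying $\widetilde C_\bullet(\Delta_1*\Delta_2)$ with a shifted tensor product of the augmented chain complexes and then applying algebraic K\"unneth over a field, is the standard proof of that fact.

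The sign in Step 2 deserves one explicit remark. The differential $\partial_1\otimes 1+(-1)^{a+1}\,1\otimes\partial_2$ is not the usual Koszul sign $(-1)^a$ for grading by dimension. It is, however, exactly the Koszul sign if you grade $\widetilde C_\bullet(\Delta_1)$ by cardinality $|\sigma|=a+1$ and $\widetilde C_\bullet(\Delta_2)$ by dimension. Then $\sigma\otimes\tau$ sits in total degree $a+1+b=i$, so no further shift is needed, and K\"unneth directly gives the summands with $a+b=i-1$. Equivalently, $x\otimes y\mapsto(-1)^b\,x\otimes y$ is an isomorphism onto the standard tensor product complex. Either way, the algebraic K\"unneth theorem applies verbatim.

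Your edge-case checks are exactly what is needed for $\Phi$ to be a chain isomorphism of augmented complexes. These are the cases where $\sigma$ or $\tau$ is empty, including $\sigma=\tau=\emptyset$ in degree $i=-1$, and where one of them is a single vertex so the augmentation appears. The $i=-1$ case is what makes the formula correct when both complexes are $\{\emptyset\}$.
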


Given two vertices $x,y$ in $\GAsin$ with $0 \leq x < y < n$, when $1 < y-x < t$, then $\GAsin \setminus (N[x] \cup N[y])$ has two connected components. Moreover, these components are isomorphic to certain induced subgraphs of ${\rm GA}(t_1,k)'$ and ${\rm GA}(t_2,k)'$ for some $t_1, t_2 < t$ such that $t_1 + t_2 = t+1$. This fact is exploited in the following result, and will be a key tool for proving Theorem \ref{thm:RegDualAndrasfaiSin}.

\begin{prop}\label{prop:DescomposicionDisjuntaLink}
     Let $G:=\GAsin$, and let $v$ be a nonzero vertex of $G$ with $1<v<t$. Then ${\rm link}_{\Delta(G)}(\{0,v\})\cong{\rm link}_{\Delta({\rm GA}(v,k)')}(\{0,v\})*{\rm link}_{\Delta({\rm GA}(t-v+1,k)')}(\{0,1\}).$
\end{prop}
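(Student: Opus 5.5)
The plan is to pass from links to induced subgraphs, split the graph into two pieces and identify each piece explicitly. By the Example in the preliminaries, ${\rm link}_{\Delta(G)}(\{0,v\}) = \Delta(G\setminus(N[0]\cup N[v]))$, and likewise the two links on the right-hand side are $\Delta({\rm GA}(v,k)'\setminus(N[0]\cup N[v]))$ and $\Delta({\rm GA}(t-v+1,k)'\setminus(N[0]\cup N[1]))$. Since $\Delta(G_1\sqcup G_2)=\Delta(G_1)*\Delta(G_2)$, it suffices to prove two things. First, $G_0:=G\setminus(N[0]\cup N[v])$ is a disjoint union $A\sqcup B$ with no edges between $A$ and $B$. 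Second, $A\cong {\rm GA}(v,k)'\setminus(N[0]\cup N[v])$ and $B\cong {\rm GA}(t-v+1,k)'\setminus(N[0]\cup N[1])$.

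Step 1 computes $V(G_0)$ using \eqref{eq:GAsin}. We have $N(0)=\{1+jt : 1\le j\le k-2\}$ and $N(v)=\{v+1+jt : 1\le j\le k-2\}$. So in the residue class $1 \pmod t$ only the vertices $1$ and $n-1$ survive, and in the class of $v+1$ only $v+1$ and $v-1$ (the latter being $v+t(k-1)+1$ in $\mathbb Z_n$). All vertices in the remaining residue classes survive, except $0$ and $v$ themselves.

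Step 2 defines the partition. Writing each vertex $x\in V(G_0)$ as $x=tq+r$ with $0\le x<n$ and $0\le r<t$, we let $A$ consist of the vertices with $r\in\{1,\dots,v\}$ and let $B$ consist of the rest, adjusting the boundary vertices $1, n-1, v-1, v+1$ by hand. An edge requires a difference $\equiv 1 \pmod t$ that is not $\pm1$ in $\mathbb Z_n$. We must then check that every edge joining a residue in $\{1,\dots,v\}$ to one outside is either an exterior-cycle edge or is incident to a deleted vertex. The only candidate crossing edges join residue classes $v$ and $v+1$, or residue classes $0$ and $1$. In these classes only a few vertices survive, and each of their crossing neighbours has been removed because it lies in $N(0)$ or $N(v)$, or because the difference is exactly $1$ or $n-1$.

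Step 3 constructs explicit isomorphisms in the style of Lemma \ref{lema:Inductivo}, by compressing residues. On $A$ we set $\phi_A(tq+r)=vq+r'$ for a suitable relabelling $r\mapsto r'$ of the residues $1,\dots,v$, so that $A$ lands in $\mathbb Z_{v(k-1)+2}$. On $B$ we map residues $v+1,\dots,t,0$ onto $\{0,\dots,t-v\}$ of $\mathbb Z_{(t-v+1)(k-1)+2}$, sending $v\mapsto 0$ and $v+1\mapsto 1$ up to a shift. The adjacency rule "difference $\equiv 1$ modulo the period and not equal to $\pm1$" is preserved because the quotient $q$ and the consecutive residue order are kept. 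We also check that the images are exactly the stated vertex sets, namely the complements of $N[0]\cup N[v]$ and of $N[0]\cup N[1]$ respectively; this is a count per residue class using Step 1.

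I expect the main obstacle to be the wraparound vertices $1$, $n-1$, $v-1$ and $v+1$. These are exactly the vertices where the exclusion of differences $1$ and $n-1$ in \eqref{eq:GAsin} matters, and they decide whether an edge crosses between $A$ and $B$ and what the boundary labels of $\phi_A$ and $\phi_B$ must be. I would handle them first, by checking a small instance such as $t=4$, $v=2$ against the pictures, and then fix the exact formulas for $\phi_A$ and $\phi_B$ accordingly.
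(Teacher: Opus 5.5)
Your plan matches the paper's proof. You reduce via $\Delta(G_1\sqcup G_2)=\Delta(G_1)*\Delta(G_2)$ to showing that $G\setminus(N[0]\cup N[v])$ splits into two components, and you identify these with the stated graphs by residue-compressing maps that keep the quotient $q$. The boundary bookkeeping you left open is settled in the paper by keeping $1$ and $v-1$ in the first component and putting $v+1$ and $-1=n-1$ in the second; $n-1$ is the one vertex that must break your residue rule, since it is adjacent to surviving multiples of $t$ such as $t$. The second isomorphism is then $v+1\mapsto 2$, $-1\mapsto -1$ and $tj+i\mapsto (t-v+1)j+(i-v+1)$ for $v+2\le i\le t$.
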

\begin{proof}
    Since the independence complex of the disjoint union of two graphs is equal to the join of the independence complexes of the two graphs, it is enough to show that $G\setminus(N[0]\cup N[v])$ is the disjoint union of two graphs $G_1$ and $G_2$, with $G_1\cong {\rm GA}(v,k)'\setminus(N[0]\cup N[v])$ and $G_2\cong{\rm GA}(t-v+1,k)'\setminus( N[0]\cup N[1])$.

    Consider $G'=G\setminus(N[0]\cup N[v])$, which has two connected components $G_1$ and $G_2$.

    On the one hand, $V(G_1)$ is constituted by vertex 1, all vertices congruent to $i$ modulo $t$ with $i\in\{2,3,\dots,v-1\}$, and all vertices congruent to $v$ modulo $t$ except $v$. The graph $G_1$ is isomorphic to ${\rm GA}(v,k)'\setminus(N[0]\cup N[v])$ via the map which sends 1 to 1 and every vertex $t j +i$ congruent to $i$ modulo $t$ (with $i\in\{2,3,\dots,v-1,v\}$) to the vertex $v j+i$ congruent to $i$ modulo $v$.

    On the other hand, $V(G_2)$ is constituted by the vertices $v+1$, $-1$, all vertices which are congruent to $i$ modulo $t$ with $i\in\{v+2,v+3,\dots,t-1\}$, and all vertices which are congruent to 0 modulo $t$ except 0. The graph $G_2$ is isomorphic to ${\rm GA}(t-v+1,k)'\setminus( N[0]\cup N[1])$ via the map which sends $v+1$ to $2$, $-1$ to $-1$ and every vertex $t j+i$ congruent to $i$ modulo $t$ (with $i\in\{v+2,v+3,\dots,t\}$) to the vertex $(t-v+1) j+(i-v+1)$ congruent to $i-v+1$ modulo $t-v+1$.
\end{proof}

With these ingredients we are now able to prove the following result, which will allow us to immediately obtain the projective dimension of $I(\GAsin)$:

\begin{thm}\label{thm:RegDualAndrasfaiSin}
    If $t\geq k$, then ${\rm reg}(I(\GAsin)^{\vee})\leq t(k-2)+1$. 
\end{thm}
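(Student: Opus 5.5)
We will prove that $\beta_{i,m}(I(\GAsin)^{\vee})=0$ whenever $m-i>t(k-2)+1$, using Theorem \ref{thm:BettiDual}(a) with $\Delta=\Delta(\GAsin)$. By that formula, $\beta_{i,m}(I_\Delta^\vee)$ is a sum of $\dim\widetilde H_{i-1}(\lk_\Delta F)$ over independent sets $F$ with $|F|=n-m$. The condition $m-i\ge t(k-2)+2=n-t$ is equivalent to $i\le t-|F|$. So it suffices to show
\[
\widetilde H_{j}(\lk_{\Delta(\GAsin)}F)=0 \quad\text{for all } j\le t-|F|-1
\]
and every independent set $F$ of $\GAsin$. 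Since $\lk_\Delta F=\Delta(\GAsin\setminus N[F])$, this is a connectivity (vanishing of low homology) statement about the graph with the closed neighbourhood of $F$ removed.

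We argue by strong induction on $t$, splitting into cases by $|F|$. For $F=\emptyset$ the complex is $\Delta(\GAsin)$ itself, and Proposition \ref{prop: UltimaDiagonalNoNula} says its only homology is in degree $t$. For $|F|=1$ we may take $F=\{0\}$ by vertex transitivity. The proof of Proposition \ref{prop: UltimaDiagonalNoNula} shows that $\lk(0)=\Delta(G_0)$ has homology only in degree $t-1$, which is fine since we need vanishing only for $j\le t-2$.

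For $|F|\ge 2$ we pick two elements of $F$. After rotation these are $0$ and $v$, with $v$ chosen to minimise the cyclic gap.

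If some gap satisfies $1<v<t$, Proposition \ref{prop:DescomposicionDisjuntaLink} splits $\lk(\{0,v\})$ as a join of links in ${\rm GA}(v,k)'$ and ${\rm GA}(t-v+1,k)'$, both with smaller parameter $t$. The remaining vertices of $F$ distribute as $F_1,F_2$ into the two components, and the join decomposition passes to the link of $F$. By the induction hypothesis, the first factor is acyclic in degrees $\le v-|F_1\cup\{0,v\}|-1$ and the second in degrees $\le (t-v+1)-|F_2\cup\{0,1\}|-1$. Theorem \ref{thm:HomologyJoin} then adds these connectivities plus one. Since $v+(t-v+1)=t+1$ and $|F|=|F_1|+|F_2|+2$, the total vanishing range is $j\le t-|F|-1$, exactly what is needed. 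We must check that the base cases $t\in\{1,2\}$ (complements of cycles, $K_{k,k}'$) satisfy the claim; this can be verified directly or from \cite{FRG09,FRG14}.

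The cases where every pair in $F$ has gap $1$ or at least $t$ need separate treatment, and this is the main obstacle.

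If $1\in F$ (adjacent elements), Lemma \ref{lema:Inductivo} identifies $\GAsin\setminus(N[0]\cup N[1])$ with ${\rm GA}(t-1,k)'\setminus N[0]$. This reduces to parameter $t-1$ with $|F|-1$ points, which again gives the bound $t-|F|-1$.

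If all elements of $F$ are pairwise at cyclic distance $\ge t$, the removed set $N[F]$ leaves a subgraph whose structure can be analysed with the interval lemmas (Lemmas \ref{lema:intervaloslargos}, \ref{lema:BorrarIntervalosPequeños}). I would try to show homology appears only in degree $t-|F|$ or higher, by peeling degree-one vertices via Lemma \ref{lem:homologiagrado1} as in Proposition \ref{prop: UltimaDiagonalNoNula}, once per element of $F$. This is where $t\ge k$ should enter: it guarantees the number of points of $F$ that can be pairwise far apart is limited, so the peeling never runs out. I expect verifying this last case carefully to be the hardest part.
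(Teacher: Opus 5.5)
Your reduction through Theorem \ref{thm:BettiDual}(a), the cases $|F|\le 1$, and your two inductive steps are correct and coincide with Cases (1), (2), (4.1) and (4.2) of the paper. The two inductive steps are Lemma \ref{lema:Inductivo} when two elements of $F$ are consecutive, and Proposition \ref{prop:DescomposicionDisjuntaLink} together with Theorem \ref{thm:HomologyJoin} when the minimal gap $v$ satisfies $1<v<t$.

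The genuine gap is the remaining case, where all elements of $F$ are pairwise at cyclic distance at least $t$. You leave this case open. Because your induction hypothesis for ${\rm GA}(v,k)'$ and ${\rm GA}(t-v+1,k)'$ also contains such faces, the induction does not close. The peeling you suggest is not supported by the available lemmas. Already for $F=\{0,t\}$ one has $N[F]=\{0,t,t+1,2t+1,\dots,(k-1)t+1\}$, so every interval of $\GAsin\setminus N[F]$ has length $t-1$. However, Lemma \ref{lema:intervaloslargos}, the short-interval removal lemma and Proposition \ref{prop:lIntervalosLongMayorIgualt} all require an interval of length at least $t$. Your guess about the role of $t\ge k$ is also off. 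Pairwise distances $\ge t$ give $t|F|\le n=t(k-1)+2$, so $|F|\le k-1$ whenever $t\ge 3$, however $t$ compares to $k$.

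The missing idea is to dispose of all faces with $|F|\le k-1$ at once, using Theorem \ref{thm:BettiDual}(b) and the Betti numbers of $I(\GAsin)$ already computed, without analysing individual links.

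\begin{itemize}
\item Since $\beta^\vee_{i,n-|F|}$ is the sum of $\dim_{\mathbb K}\widetilde H_{i-1}(\lk_\Delta S)$ over all faces $S$ with $|S|=|F|$, it suffices to show $\beta^\vee_{i,n-|F|}=0$ for $0\le i\le t-|F|$.
\item Part (b) bounds this by $\sum_{a=0}^{|F|}\binom{n-|F|+a}{a}\beta_{n-|F|-i-1,\,n-|F|+a}$.
\item For $a<|F|$, the degree lies strictly between $n-k$ and $n$, and the row is $i+1+a\le t$. So Proposition \ref{prop:CerosEncimaDel1} gives $0$.
\item For $a=|F|$, the degree is $n$ and the homological index satisfies $n-|F|-i-1\ge n-t-1$, so it differs from $n-t-2$. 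Hence Proposition \ref{prop: UltimaDiagonalNoNula} gives $0$.
\end{itemize}

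This argument works for every parameter, which is what your induction needs. Once faces of size at most $k-1$ are handled this way, any face with $|F|\ge k$ (and $t\ge 3$) contains two elements at cyclic distance less than $t$, by the same counting. Your two inductive steps then finish the proof.
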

\begin{proof}

    Throughout this proof, for all $i,j$ we denote $\beta_{i,j}(I(\GAsin))$ by just $\beta_{i,j}$, and $\beta_{i,j}(I(\GAsin)^{\vee})$ by just $\beta_{i,j}^{\vee}$, and the independence complex of $\GAsin$ by just $\Delta$.

    The regularity of $I(\GAsin)^{\vee}$ is upper bounded by $t(k-2)+1$ if and only if $\beta_{i,i+j}^{\vee}$ equals 0 for every $i\geq 0$ and every $j\geq t(k-2)+2$. From Theorem \ref{thm:BettiDual}.(a), this is also equivalent to the fact that for every $F\in\Delta$ with $|F|\in\{0,1,\dots,t\}$, $\widetilde{H}_i({\rm link}_{\Delta}F)=0$ for all $i\in\{-1,0,\dots,t-|F|-1\}$. To prove this, we distinguish several cases:

\smallskip

\noindent \underline{Case (1)}: If $|F|=0$, then ${\rm link}_{\Delta}F=\Delta$ and by Proposition \ref{prop: UltimaDiagonalNoNula} we know that $\widetilde{H}_i(\Delta)=0$ for every $i\neq t$.
    
 \medskip

\noindent \underline{Case (2)}:  If $|F|=1$, then ${\rm link}_{\Delta} F=\Delta(W)$, where $W$ is a proper induced subgraph of $\GAsin$ with $|V(W)|=n-k+1$. Hence by Proposition \ref{prop:lIntervalosLongMayorIgualt} we know that $\widetilde{H}_i({\rm link}_{\Delta} F)\cong\widetilde{H}_i(\Delta(W))\cong0$ for every $i\neq t-1$.

\medskip

\noindent \underline{Case (3)}: If $|F|\in\{2,3,\dots,k-1\}$, let us prove that $\widetilde{H}_i({\rm link}_{\Delta}F)=0$ for every $i\in\{-1,0,\dots,t-|F|-1\}$.

    Since by Theorem \ref{thm:BettiDual}.(a) we have that $\beta_{i,n-|F|}^{\vee}=\sum_{\substack{S\in\Delta\\|S|=|F|}}\dim_{\mathbb K}\widetilde{H}_{i-1}({\rm link}_\Delta S)\,,$ it suffices to show that $\beta_{i,n-|F|}^\vee=0$ for every $i\in\{0,1,\dots,t-|F|\}$, where by Theorem \ref{thm:BettiDual}.(b) $$\beta_{i,n-|F|}^\vee\leq\sum_{a=0}^{|F|}{n-|F|+a\choose a}\, \beta_{n-|F|-i-1,(n-|F|-i-1)+(i+1+a)}\,.$$

    Consequently it is enough to show that the Betti numbers $\beta_{n-|F|-i-1,(n-|F|-i-1)+(i+1+a)}$ are equal to 0 for every $i\in\{0,1,\dots,t-|F|\}$ and every $a\in\{0,\dots,|F|\}.$

    Since $(n-|F|-i-1)+(i+1+a)=n-|F|+a\geq n-|F|> n-k$, Proposition \ref{prop:CerosEncimaDel1} tells us all the previous Betti numbers are 0 except when $i=t-|F|$ and $a=|F|$. However, in this case Proposition \ref{prop: UltimaDiagonalNoNula} allows us to conclude that $\beta_{n-t-1,n}$ is also equal to 0.

\medskip

\noindent \underline{Case (4)}: If $|F|\in\{k,k+1,\dots,t\}$, let us show that $\widetilde{H}_i({\rm link}_{\Delta}F)=0$ for every $i\in\{-1,0,\dots,t-|F|-1\}$.

    We proceed by induction on $t$. If $t=3$, then $k=3$, $|F|=3$ and ${\rm GA}(3,3)'\cong4\cdot K_2$. In this case ${\rm link}_{\Delta({\rm GA}(3,3)')}F$ is isomorphic to $\Delta(K_2)$ or $\Delta(2\cdot K_2)$ and $\widetilde{H}_{-1}({\rm link}_{\Delta({\rm GA}(3,3)')}F)\cong 0$.

   Assume now that the result is true for every positive integer $t'$ with $k\leq t'<t$, and let us prove it for $t$.

        When considering the link of a face $F$ with at least $k$ vertices, there must be two vertices in $F$ such that the length of the interval between them is strictly smaller than $t-1$ (otherwise, the number of vertices would be at least $t k>t(k-1)+2$). Without loss of generality, we can assume that these vertices are 0 and $v$, where $v<t$ is the smallest vertex in $F$ with this property.

        If we focus first on the link of $\{0,v\}$, we can distinguish two new possibilites:

    \smallskip \noindent
\underline{Case (4.1)}: If $v=1$, then by Lemma \ref{lema:Inductivo} we have that $\GAsin\setminus(N[0]\cup N[1])$ is isomorphic to ${\rm GA}(t-1,k)'\setminus N[0]$. Hence ${\rm link}_{\Delta}(\{0,1\})\cong{\rm link}_{\Delta({\rm GA}(t-1,k)')}(\{0\})$ and doing the link of the rest of the vertices we get that ${\rm link}_{\Delta}F ={\rm link}_{\Delta}(\{0,1\}\cup (F\setminus\{0,1\}))\cong{\rm link}_{\Delta({\rm GA}(t-1,k)')}(\{0\}\cup F_1')$,
     where $F_1=\{0\}\cup F_1'$ satisfies that $|F_1|=|F|-1$.

     As a consequence, $\widetilde{H}_i({\rm link}_{\Delta}F)\cong\widetilde{H}_i({\rm link}_{\Delta({\rm GA}(t-1,k)')}F_1)$ and, by the induction hypothesis, this is  0 for every $i\in\{-1,0,\dots,t-1-|F_1|-1\}$, where $t-1-|F_1|-1=t-|F|-1$.

     \smallskip \noindent
\underline{Case (4.2)}: If $v\neq 1$, then by Proposition \ref{prop:DescomposicionDisjuntaLink} we have that ${\rm link}_{\Delta}(\{0,v\})\cong{\rm link}_{\Delta({\rm GA}(v,k)')}(\{0,v\})*{\rm link}_{\Delta({\rm GA}(t-v+1,k)')}(\{0,1\})$. Consequently,
    \begin{eqnarray*}
        {\rm link}_{\Delta}F&=&{\rm link}_{\Delta}(\{0,v\}\cup(F\setminus\{0,v\}))\cong\\
        &\cong&{\rm link}_{\Delta({\rm GA}(v,k)')}(\{0,v\}\cup F_1')*{\rm link}_{\Delta({\rm GA}(t-v+1,k)')}(\{0,1\}\cup F_2')
    \end{eqnarray*}
    
    \noindent for some $F_1'$ and $F_2'$, where $F_1=\{0,v\}\cup F_1'$ and $F_2=\{0,1\}\cup F_2'$ satisfy that $|F_1|+|F_2|=|F|+2$.

    Now we fix $i\in\{-1,0,\dots,t-|F|-1\}$, and by Theorem \ref{thm:HomologyJoin} we have that $$\dim_{\mathbb K} \widetilde{H}_i({\rm link}_{\Delta} F)=\sum_{p+q=i-1}\left(\dim_{\mathbb K}\widetilde{H}_p({\rm link}_{\Delta({\rm GA}(v,k)')}F_1)\, \dim_{\mathbb K}\widetilde{H}_q({\rm link}_{\Delta({\rm GA}(t-v+1,k)')}F_2)\right).$$

   To finish the proof it suffices to see that all the summands in the expression above are equal to 0. Indeed, if $p\leq v-|F_1|-1$, then the first factor $\dim_{\mathbb K}\widetilde{H}_p({\rm link}_{\Delta({\rm GA}(v,k)')}F_1)$ equals 0 by the induction hypothesis. In addition, when $p\geq v-|F_1|$, then 
    \begin{eqnarray*}
        q&=&i-1-p\leq (t-|F|-1)-1-(v-|F_1|)=\\
        &=&t-|F|-2-v+(|F|+2-|F_2|)=\\
        &=&t-v-|F_2|=(t-v+1)-|F_2|-1
    \end{eqnarray*} and the second factor $\dim_{\mathbb K}\widetilde{H}_q({\rm link}_{\Delta({\rm GA}(t-v+1,k)')}F_2)$ is zero by the induction hypothesis.   \end{proof}

Combining Propositions \ref{prop:PrimeroPdAndrasfaiSin} and \ref{prop:RegPdDual} and Theorem \ref{thm:RegDualAndrasfaiSin}, we conclude the following:

\begin{thm}\label{thm:pd}
   ${\rm pd}(I(\GAsin))=n-t-2=t(k-2)$.
\end{thm}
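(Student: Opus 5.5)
The proof combines the lower bound already available with an upper bound obtained in two regimes. First, $n-t-2 = t(k-1)+2-t-2 = t(k-2)$, so the two expressions in the statement agree. By Proposition \ref{prop: UltimaDiagonalNoNula}, $\beta_{n-t-2,\,n}(I(\GAsin))=1$, which gives ${\rm pd}(I(\GAsin))\geq n-t-2$ for all $t\geq 1$ and $k\geq 3$. It remains to prove ${\rm pd}(I(\GAsin))\leq n-t-2$, and we split according to the relative size of $t$ and $k$.

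\emph{Case $t\leq k+1$.} This is exactly Proposition \ref{prop:PrimeroPdAndrasfaiSin}. The idea behind it, which we may simply invoke, is as follows. A nonzero entry $\beta_{i,i+j}$ with $i>n-t-2$ must lie in a row $j\leq t+1$, since Theorem \ref{thm:lastline} kills all rows $j\geq t+2$ except at $i+j=n$. Such an entry therefore has $i+j>n-t-2+2=n-t$. For $t\leq k$ this gives $i+j>n-k$, so Proposition \ref{prop:CerosEncimaDel1} forces $j=t+1$. But then $i+j\geq n$ (strictly, $i+j>n-1$), and Proposition \ref{prop:HochsterRothVanTuyl} forces $i+j=n$, contradicting Proposition \ref{prop: UltimaDiagonalNoNula}, which says $\beta_{i,n}$ lives only in row $t+2$. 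The boundary values $t=k, k+1$ need the small extra care already handled in that proposition.

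\emph{Case $t\geq k$.} Here we use Alexander duality. The edge ideal $I(\GAsin)$ is the Stanley--Reisner ideal $I_\Delta$ with $\Delta=\Delta(\GAsin)$, and $(\Delta^\vee)^\vee=\Delta$. Proposition \ref{prop:RegPdDual} applied to $\Delta^\vee$ therefore gives
\[
{\rm pd}(I(\GAsin)) = {\rm pd}(I_{(\Delta^\vee)^\vee}) = {\rm reg}(I_{\Delta^\vee})-1 = {\rm reg}(I(\GAsin)^\vee)-1 .
\]
Theorem \ref{thm:RegDualAndrasfaiSin} bounds the right-hand side by $t(k-2)+1-1=t(k-2)=n-t-2$. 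The two regimes overlap at $t\in\{k,k+1\}$, so every $t$ is covered, and combining the upper bound with the lower bound gives equality.

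The only delicate point is keeping the indices straight in the Alexander duality step: in Proposition \ref{prop:RegPdDual}, $\Delta$ must be replaced by $\Delta^\vee$ so that the regularity sits on the dual ideal and the projective dimension on $I(\GAsin)$. The substantive work has already been done in Theorem \ref{thm:RegDualAndrasfaiSin} and Proposition \ref{prop:PrimeroPdAndrasfaiSin}.
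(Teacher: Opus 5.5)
Your proposal is correct and follows the paper's own argument: the lower bound from $\beta_{n-t-2,n}=1$, Proposition~\ref{prop:PrimeroPdAndrasfaiSin} for $t\leq k+1$, and, for $t\geq k$, Proposition~\ref{prop:RegPdDual} applied to $\Delta^\vee$ together with Theorem~\ref{thm:RegDualAndrasfaiSin}. One small correction: your sketch of the first regime already covers $t=k$, and only $t=k+1$ needs the extra check that the linear-strand entry $\beta_{n-k-2,\,n-k}$ vanishes, but that case is covered by the duality argument anyway.
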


In addition, we have proved that the shape of the Betti diagram of $I(\GAsin)$ is contained in the shaded area of Figure \ref{fig:ShadowBettiDiagram}.

\begin{figure}[!ht]
 \centering
 	\includegraphics[width=14cm]{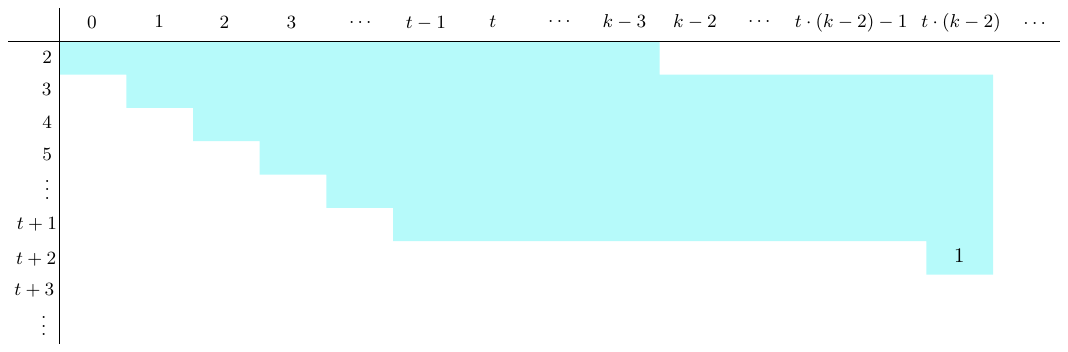}
    \caption{Area containing the exact shape of the Betti diagram of $I(\GAsin)$}
    \label{fig:ShadowBettiDiagram}
 \end{figure}

 Furthermore, we can determine the exact shape of the Betti diagram of $I(\GAsin)$ in the case $t=3$. We devote the next section to this problem.
 
\section{Exact shape of the Betti diagram of $I({\rm And}(k)')$}
 \label{sec:t3}

The goal of this section is to  describe the exact shape of the Betti diagram of $I({\rm And}(k)')$, which is given by the following result (see Figure \ref{fig:shape t=3}).

\begin{figure}[!ht]
 \centering
 	\includegraphics[width=15cm]{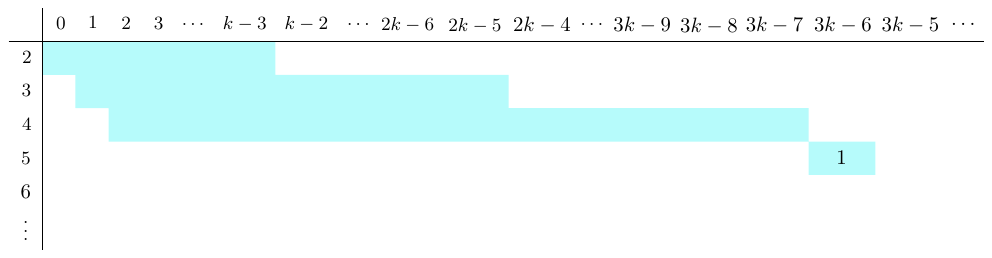}
    \caption{Exact shape of the Betti diagram of $I({\rm And}(k)')$}
    \label{fig:shape t=3}
 \end{figure}

 \begin{thm}\label{thm:t3} Let $k \geq 3$ and consider ${\rm And}(k)'$, the Andrásfai graph from which the exterior cycle has been removed. Then:
\begin{itemize}
\item[(1)] if $\beta_{i,i+j}(I({\rm And}(k)')) \neq 0$, then $2 \leq j \leq 5$,
\item[(2)] $\beta_{i,i+2}(I({\rm And}(k)')) \neq 0$ if and only if $i\in\{0,\dots,k-3\},$
\item[(3)] $\beta_{i,i+3}(I({\rm And}(k)')) \neq 0$ if and only if $i\in\{1,\dots,2k-5\},$
\item[(4)] $\beta_{i,i+4}(I({\rm And}(k)')) \neq 0$ if and only if $i\in\{2,\dots,3k-7\},$ and
\item[(5)] $\beta_{i,i+5}(I({\rm And}(k)')) \neq 0$ if and only if $i = 3k-6.$
\end{itemize}
\end{thm}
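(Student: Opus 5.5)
Here $t=3$, so $n=3k-1$ and ${\rm pd}=3(k-2)=3k-6$. Items (1), (2) and (5) follow from results already proved, so the plan is to settle those first and put the real work into rows $3$ and $4$.

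\textbf{Items (1), (2) and (5).} Item (1) follows from Theorem \ref{thm:lastline}, which gives regularity $t+2=5$; and $j\ge 2$ holds for any edge ideal. Item (2) is Proposition \ref{prop:LinearStrandAndrasfaiSinCiclo}, since $\binom{k-2}{i+1}\neq0$ exactly when $0\le i\le k-3$. Item (5) is the case $j=5$ of Theorem \ref{thm:lastline}: the only nonzero entry is at $i=n-5=3k-6$. For $k=3$ the graph is $4\cdot K_2$ and Figure \ref{fig:bettik3} gives everything directly, so from now on I take $k\geq4$.

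\textbf{Row $4$.} For the left end, Katzman's Lemma \ref{lema:MainDiagonalKatzman} gives $\beta_{i,i+4}=0$ for $i<2$. Proposition \ref{prop:DiagonalSinCiclo} gives $\beta_{2,6}\neq0$, because the induced matching number is $t=3$.

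For the right end, Lemma \ref{lema:Quitar2SinCiclo} gives $\beta_{n-6,n-2}=\beta_{3k-7,3k-3}\neq0$, which is the claimed last entry. Proposition \ref{prop:UltimaFilaPropio} does not kill the positions to its right, so I must do this separately: for $i+4\in\{n-1,n\}$ the entries vanish by Lemma \ref{lema:diagn-1} and Proposition \ref{prop: UltimaDiagonalNoNula}.

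The interior $2<i<3k-7$ is the main obstacle. I want, for each size $m=i+4$ with $6\le m\le n-2$, an induced subgraph $W$ on $m$ vertices with $\widetilde H_1(\Delta(W))\neq0$. My first attempt uses Proposition \ref{prop:lIntervalosLongMayorIgualt}: remove $n-m$ vertices so that at least two intervals have length $\ge 3$. This gives $\widetilde H_{2}\cong\mathbb K^{\ell-1}$. That is the wrong degree for row $4$ but the right one for row $5$... and correcting this, $\widetilde H_{t-1}=\widetilde H_2$ corresponds to $j=4$. So the construction does work: choose $n-m$ removed vertices forming $\ell\ge2$ intervals of length $\ge3$. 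This is possible while $n-m\ge2$ and $m\ge 6$, with removed vertices clustered in two blocks. This fills all $m$ from $6$ to $n-2$, that is, $i$ from $2$ to $3k-6$... but the bound must be $i\le 3k-7$, and indeed $m\le n-2$ gives $i\le 3k-7$.

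\textbf{Row $3$.} Here I need $\widetilde H_1(\Delta(W))\ne0$ for $|W|=i+3$ with $1\le i\le 2k-5$. At the left end, $\beta_{1,4}\neq0$ from $2K_2$, using $\widetilde H_0$... careful: row $3$ means $\widetilde H_{1}$. So $\beta_{1,4}$ comes from $W=C_4$ or from $2K_2$ giving $\widetilde H_1$? By Lemma \ref{lema:HomologiaPalitos}, $2K_2$ gives $\widetilde H_1$, so $\beta_{1,4}\ne0$.

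For the right end, Proposition \ref{prop:CerosEncimaDel1} shows $\beta_{i,i+3}=0$ when $i+3>n-k=2k-1$, i.e. $i>2k-4$. That leaves the single position $i=2k-4$, which I would try to exclude by a direct Mayer–Vietoris argument on subgraphs with $2k-1$ vertices; this vanishing is the step I trust least.

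For nonvanishing I would build $W$ from Proposition \ref{prop:lIntervalosLongMayorIgualt} style graphs, using Proposition \ref{prop:DescomposicionDisjuntaLink} and Theorem \ref{thm:HomologyJoin}: take $W=\GAsin\setminus(N[x]\cup N[y])$ plus trimmed pieces, built as joins of a $K_2$-type component ($\widetilde H_0$) with a star or $2K_2$ component. This should realize $\widetilde H_1$ on vertex counts from $4$ up to $2k-2$. I expect filling every intermediate size, together with the endpoint $i=2k-5$ exactly, to be the hardest part. There I would first try induction on sizes, deleting a vertex $x$ with $x-1\notin W$ and using Corollary \ref{cor:TruquitoMayerVietoris} together with Lemma \ref{lema:HomologiaLinkSinCiclo}, which kills the link's homology in degrees $\ge1$ and so transfers $\widetilde H_1$ nonvanishing between sizes.
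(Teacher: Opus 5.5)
Items (1), (2), (4) and (5) are handled correctly, and essentially as in the paper. In row $4$ the entries for $2\le i\le 3k-7$ come, exactly as in Lemma \ref{lem:fila4And}, from Proposition \ref{prop:lIntervalosLongMayorIgualt} applied to subgraphs with two intervals of length $\ge3$; the relevant group is $\widetilde H_2$, as you eventually noticed. The vanishing at both ends follows from Lemma \ref{lema:MainDiagonalKatzman}, Lemma \ref{lema:diagn-1} and Proposition \ref{prop: UltimaDiagonalNoNula}.

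The genuine gap is item (3), where beyond $i=1$ neither direction is proved.

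\textbf{Vanishing of $\beta_{2k-4,2k-1}$.} You only announce this as ``a direct Mayer--Vietoris argument''. The missing idea is a short dichotomy. Take $W<{\rm And}(k)'$ with $2k-1$ vertices, so $k$ vertices are removed and there are $k$ intervals of total length $2k-1$. If some interval has length $\ge3$, Proposition \ref{prop:lIntervalosLongMayorIgualt} puts all homology in degree $2$, so $\widetilde H_1(\Delta(W))=0$. Otherwise every interval has length $\le2$, which forces one interval of length $1$ and $k-1$ of length $2$. Up to rotation the removed set is then $\{0,3,\dots,3k-3\}$. Vertex $2$ is isolated in $W$, since its neighbours $6,9,\dots,3k-3$ are all removed, so $\Delta(W)$ is acyclic.

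\textbf{Nonvanishing of $\beta_{i,i+3}$ for $2\le i\le 2k-5$.} You give no construction, and the tools you list do not cover the whole range.
\begin{itemize}
\item Joining $\Delta(K_2)$ with $\Delta(2\cdot K_2)$ produces homology in degree $2$, i.e.\ row $4$, not row $3$.
\item Proposition \ref{prop:DescomposicionDisjuntaLink} with $t=3$ only allows $v=2$. Then ${\rm And}(k)'\setminus(N[0]\cup N[2])\cong K_{1,k-2}\sqcup K_2$ has $k+1$ vertices. Trimming leaves gives only $1\le i\le k-2$, which misses $k-1\le i\le 2k-5$ once $k\ge4$.
\item Lemma \ref{lema:HomologiaLinkSinCiclo} with Corollary \ref{cor:TruquitoMayerVietoris} gives isomorphisms only in degrees $\ge2$. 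In degree $1$, Mayer--Vietoris yields at most an injection $\widetilde H_1(\Delta(W\setminus\{x\}))\hookrightarrow\widetilde H_1(\Delta(W))$, and only when both $x\pm1\notin V(W)$. So nonvanishing propagates upward, by adding vertices with no cyclic neighbour in $W$, not downward by deletion.
\end{itemize}
You never exhibit a subgraph with $2k-2$ vertices and $\widetilde H_1\neq0$.

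The paper closes this with an explicit top example: the induced subgraph on $(\{v\equiv0,1\pmod 3\}\cup\{2\})\setminus\{0,1,4\}$. It has $2k-2$ vertices and reduces to $2\cdot K_2$ by repeated use of Lemma \ref{lema:Borrado}. Removing its largest vertices one at a time preserves this reduction. This gives $\widetilde H_1\cong\mathbb K$ (Lemma \ref{lema:HomologiaPalitos}) for every size from $4$ to $2k-2$.
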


Most of the statements in this result are obtained by gathering results from the previous sections. Indeed, (1) is equivalent to ${\rm reg}(I({\rm And}(k)')) \leq 5$, and hence it follows from  Theorem \ref{thm:lastline}. Statement (2) follows from the description of the linear strand provided in Proposition \ref{prop:LinearStrandAndrasfaiSinCiclo}.  Concerning (3), we have that $\beta_{i,i+3}(I({\rm And}(k)')) = 0$ for all $i \geq 2k-3$ by Proposition \ref{prop:CerosEncimaDel1}. In this section we will prove that  $\beta_{2k-4,2k-1} = 0$ (Lemma \ref{lema:UltimoCero3}) and that $\beta_{i,i+3} \neq 0$ for $i\in\{1,\dots,2k-5\}$ (Lemma \ref{lem:Fila3Andrasfai}). 
Concerning (4), we know that $\beta_{i,i+4}(I({\rm And}(k)')) = 0$ for all  $i \geq 3k-5$ by Theorem \ref{thm:pd}, and for $i = 3k-6$ by Lemma \ref{lema:diagn-1}. In this section we will prove that $\beta_{i,i+4} \neq 0$ for $i\in\{2,\dots,3k-7\}$ (Lemma \ref{lem:fila4And}). Finally, (5) follows from Theorem \ref{thm:lastline}.

 \begin{lem}\label{lema:UltimoCero3}
     $\beta_{i,2k-1}(I({\rm And}(k)'))=0$ for every $i\neq 2k-5$.
 \end{lem}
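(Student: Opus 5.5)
The plan is to argue directly via Hochster's formula (Proposition \ref{prop:HochsterRothVanTuyl}). Here $t=3$ and $n=3(k-1)+2=3k-1$, so $2k-1=n-k$, and
\[
\beta_{i,2k-1}(I({\rm And}(k)'))=\sum_{\substack{W<{\rm And}(k)'\\ |V(W)|=2k-1}}\dim_{\mathbb K}\widetilde H_{2k-i-3}(\Delta(W)).
\]
The index $i=2k-5$ corresponds exactly to homological degree $2=t-1$. It therefore suffices to show that $\widetilde H_j(\Delta(W))=0$ for all $j\neq 2$ and every induced subgraph $W$ on $2k-1$ vertices. Such a $W$ is obtained by deleting $k$ vertices, so it has exactly $k$ intervals, whose lengths add up to $2k-1$.

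First I take the case where some interval of $W$ has length at least $3=t$. Then Proposition \ref{prop:lIntervalosLongMayorIgualt} applies directly and gives $\widetilde H_j(\Delta(W))=0$ for every $j\neq t-1=2$, which is what we need.

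Otherwise every interval has length at most $2$. Since there are $k$ intervals of total length $2k-1$, exactly one interval has length $1$ and the remaining $k-1$ intervals have length $2$. This makes $W$ rigid: by vertex transitivity (rotation of $\mathbb Z_n$) I may assume the deleted vertices are $0,3,6,\dots,3(k-1)$. The vertices of $W$ are then $3m+1,3m+2$ for $0\le m\le k-2$, together with the single vertex $3k-2=n-1$. The key claim is that $n-1$ is isolated in $W$. Its neighbours in ${\rm And}(k)'$ are $n-1+(1+3j)=3j$ for $j\in\{1,\dots,k-2\}$, and all of these are multiples of $3$, hence deleted. Lemma \ref{lema:BasicsHomology}.(a) then shows that $\Delta(W)$ is acyclic, so this case contributes nothing in any degree.

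Combining the two cases, every summand in Hochster's formula with $2k-i-3\neq 2$ vanishes, so $\beta_{i,2k-1}=0$ for $i\neq 2k-5$. The only step that needs care is the counting argument that forces the rigid configuration, together with checking the neighbourhood of $n-1$ using \eqref{eq:GAsin}. In particular I must make sure that the excluded difference $n-1$ and the wrap-around convention do not produce an extra neighbour; the neighbourhood computation $N(n-1)=\{3,6,\dots,3(k-2)\}$ should settle this. For $k=3$ the same argument applies, with ${\rm And}(3)'=4\cdot K_2$.
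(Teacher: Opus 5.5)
Your proposal is correct and follows essentially the same route as the paper. Both use Hochster's formula, then Proposition \ref{prop:lIntervalosLongMayorIgualt} when some interval has length at least $3$, and otherwise reduce to the rigid configuration with deleted vertices $0,3,\dots,3(k-1)$ and an isolated vertex. The only difference is cosmetic: you exhibit $n-1$ as the isolated vertex (with $N(n-1)=\{3,6,\dots,3(k-2)\}$, all deleted), while the paper uses vertex $2$; both checks are valid.
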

\begin{proof}

    By Proposition \ref{prop:HochsterRothVanTuyl}, it suffices to show that $\widetilde{H}_j(\Delta(W))=0$ for every $j\neq 2$ and every $W<{\rm And}(k)'$ with $|V(W)|=2k-1$, which corresponds to removing $k$ vertices from ${\rm And}(k)'$.

    If $W$ has at least one interval of length at least 3, this is just Proposition \ref{prop:lIntervalosLongMayorIgualt}. Assume now that the length of all the intervals in $W$ is strictly smaller than 3. We necessarily have that $W$ has one interval of length $1$ and $k-1$ intervals of length $2$, and that there are no two consecutive vertices in ${\rm And}(k)'\setminus W$. Then, we may assume that $V(W) = \{ i \in \mathbb Z_{3k-1} \, \vert \, 0 < i < 3k-1$ and $i \not\equiv 0 \ ({\rm mod}\ 3)\}.$ Hence $2$ is an isolated vertex of $W$, and all the reduced homology groups of its independence complex are~$0$. 
\end{proof}

Now we show that there are induced subgraphs of ${\rm And}(k)'$ obtained by removing at least $k+1$ vertices such that the first reduced homology groups of their independence complexes are not 0. This can be written in terms of the Betti numbers as follows: 

\begin{lem}\label{lem:Fila3Andrasfai}
    $\beta_{i,i+3}(I({\rm And}(k)'))\neq 0$ for all $i\in\{1,\dots,2k-5\}$.
\end{lem}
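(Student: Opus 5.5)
The plan is to use Hochster's formula (Proposition \ref{prop:HochsterRothVanTuyl}) with $t=3$, $n=3k-1$. Since $\beta_{i,i+3}(I({\rm And}(k)'))$ is a sum of the dimensions of $\widetilde{H}_1(\Delta(W))$ over the induced subgraphs $W$ on $i+3$ vertices, it suffices to exhibit, for each $m:=i+3\in\{4,\dots,2k-2\}$, one induced subgraph $W$ of ${\rm And}(k)'$ with $|V(W)|=m$ and $\widetilde{H}_1(\Delta(W))\neq 0$.

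The candidate for $W$ is a disjoint union $W=W_1\sqcup W_2$ with no edges between the two parts, where $W_1\cong K_{a,b}$ and $W_2\cong K_{c,d}$ and $a,b,c,d\geq 1$. The independence complex of $K_{a,b}$ is the disjoint union of two simplices, so it has $\widetilde{H}_0\cong\mathbb K$ and no other reduced homology; the case $K_{1,b}$ is Lemma \ref{lema:HomologiaStarGraph}. Since $\Delta(W)=\Delta(W_1)*\Delta(W_2)$, Theorem \ref{thm:HomologyJoin} gives $\widetilde{H}_1(\Delta(W))\cong\widetilde{H}_0(\Delta(W_1))\otimes\widetilde{H}_0(\Delta(W_2))\cong\mathbb K$. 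The case $m=4$, $W=2\cdot K_2$, is the special case of Lemma \ref{lema:HomologiaPalitos} with $m=2$, and it can be realized inside the induced matchings of size $3$ from Lemma \ref{lema:DiagonalSinCiclo}.

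For the general construction I would imitate the proof of Lemma \ref{lema:BipartitosAndrasfaiSinCiclo}. There each induced $K_{a,b}$ has one side of the form $x+3k_i$ and the other of the form $x+3c_j+1$ with $k_i<c_j\leq k-2$. The first block is $W_1$ with $A_1=\{3p : p\in P\}$ and $B_1=\{3q+1 : q\in Q\}$, where $P<Q$ lie in $\{0,\dots,k-2\}$. The second block $W_2$ is a translate of such a configuration, using a different pair of residues, for instance residues $1,2$ via $x=1$, or a translate by roughly half the cycle. Its index sets $P',Q'$ are chosen so that, by \eqref{eq:GAsin}, no difference between a vertex of $W_1$ and a vertex of $W_2$ is congruent to $1$ mod $3$ unless it equals $\pm 1$ mod $n$.

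Each block can contain up to $k-1$ vertices, and its size can be lowered one vertex at a time down to $2$ by deleting vertices, keeping both sides nonempty. This covers every $m$ from $4$ up to $2k-2$, which corresponds exactly to $i\in\{1,\dots,2k-5\}$.

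The main obstacle is choosing the two blocks so that there are no cross edges while still reaching the extreme value $m=2k-2$. The mod-$3$ and "not $\pm1$" conditions of \eqref{eq:GAsin} make this a delicate case check on the relative positions of the blocks. I would first try to use the excluded exterior-cycle edges $\{u,u\pm1\}$, placing $W_2$ adjacent to $W_1$ in the cyclic order, as in the consecutive structure of Lemma \ref{lema:DiagonalSinCiclo}, so that the potentially dangerous differences are exactly $\pm1$.

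If a clean block construction fails for large $m$, the fallback is an inductive Mayer--Vietoris argument (Corollary \ref{cor:TruquitoMayerVietoris}). Starting from a subgraph with nonzero $\widetilde{H}_1$, I would add a vertex $x$ whose link is acyclic in degrees $\geq 0$, for example a vertex with $N(y)\subseteq N(x)$ for some $y$ already present, in the spirit of Lemma \ref{lema:Borrado}. This increases $m$ by one while preserving $\widetilde{H}_1\neq 0$.
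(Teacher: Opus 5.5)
\textbf{There is a genuine gap: the two-biclique construction cannot reach the top of the range.} Your reduction via Hochster's formula is correct: it suffices to find, for each $m\in\{4,\dots,2k-2\}$, an induced $W$ on $m$ vertices with $\widetilde{H}_1(\Delta(W))\neq 0$. The join computation for two mutually non-adjacent induced bicliques is also correct.

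The failure is at $m=2k-2$. Induced bicliques have at most $k-1$ vertices (Lemma \ref{lema:BipartitosAndrasfaiSinCiclo}), so $m=2k-2$ forces both blocks to have exactly $k-1$ vertices. By the parametrization in that lemma's proof, such a block is a translate of $W^{(a)}=\{0,3,\dots,3a-3\}\cup\{3a+1,\dots,3k-5\}$ for some $1\le a\le k-2$.

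Computing $N[W^{(a)}]$ in $\mathbb Z_{3k-1}$ shows that its complement has exactly six vertices. They are $\{1,\,3k-6,\,3k-3,\,3a-4,\,3a-1,\,3a+2\}$ for $a\ge 2$, and $\{1,\,2,\,5,\,3k-6,\,3k-3,\,3k-2\}$ for $a=1$. The second block must avoid $N[W_1]$, so it has at most six vertices, which forces $k-1\le 6$. Hence for every $k\ge 8$ the case $i=2k-5$ cannot be realized this way, and neither can $m=2k-3$ once $k\ge 9$. No choice of residues or translates fixes this. The construction does work for small $k$; for instance, for $k=6$ you can take $\{0,3,7,10,13\}\sqcup\{5,8,12,15,1\}$.

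\textbf{Your fallback is the right mechanism, but it is only announced.} The whole content of the lemma is to exhibit a concrete chain along which a dominated vertex can be added at every step, all the way up to $2k-2$ vertices. The paper does exactly this, in the downward direction. It takes $W_{2k-5}=(\{v\equiv 0,1\pmod 3\}\cup\{2\})\setminus\{0,1,4\}=\{2,3,6,7,9,10,\dots,3k-3,3k-2\}$ and deletes the largest vertex each time. At each step the deleted vertex $x$ has $x-3$ present and $x+1$ absent, so Lemma \ref{lema:Borrado} applies and homology is preserved. The process ends at $\{2,3,6,7\}$, which induces $2\cdot K_2$ with edges $\{2,6\}$ and $\{3,7\}$.

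Read upward, this is your fallback made explicit: start from $\{2,3,6,7\}$ and add $9,10,12,13,\dots,3k-3,3k-2$ in that order. The intermediate sets give every $m$ from $4$ to $2k-2$.
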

\begin{proof}
    By Proposition \ref{prop:HochsterRothVanTuyl}, for every $i\in\{1,\dots,2k-5\}$ it is enough to find an induced subgraph $W_i$ of ${\rm And}(k)'$ with $|V(W_i)|=i+3$ such that $\widetilde{H}_1(\Delta(W_i))\neq0$.

First, we can consider the induced subgraph $W_{2k-5}$ of ${\rm And}(k)'$ on $\{v_1,\dots,v_{2k-2}\}$, with $0< v_1<\cdots<v_{2k-2}=n-1$ and $\{v_1,\dots,v_{2k-2}\}=(\{v\in V({\rm And}(k)')\,|\, v\equiv i\pmod t,\,i\in\{0,1\}\}\cup\{2\})\setminus\{0,1,4\}$. By repeatedly applying Lemma \ref{lema:Borrado}, we have that $\widetilde{H}_j(\Delta(W_{2k-5}))\cong\widetilde{H}_j(\Delta(2\cdot K_2))$ for all $j\geq -1$.
    
Now, if we define $W_i:=W_{i+1}\setminus\{v_{i+4}\}$ for every $i\in\{1,\dots,2k-6\}$, then $|V(W_i)|=i+3$ and, again by Lemma \ref{lema:Borrado}, $\widetilde{H}_j(\Delta(W_i))\cong\widetilde{H}_j(\Delta(2\cdot K_2))$ for all $j\geq -1$.

    We conclude by Lemma \ref{lema:HomologiaPalitos}.
\end{proof}

To finish, we prove that there are proper induced subgraphs of ${\rm And}(k)'$ whose independence complexes have non-trivial second reduced homology groups. This can be written in terms of the Betti numbers as follows:

\begin{lem}\label{lem:fila4And}
    $\beta_{i,i+4}(I({\rm And}(k)'))\neq0$ for all $i\in\{2,\dots,3k-7\}$.
\end{lem}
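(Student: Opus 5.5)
The plan is to apply Hochster's formula (Proposition~\ref{prop:HochsterRothVanTuyl}) together with Proposition~\ref{prop:lIntervalosLongMayorIgualt} for $t=3$. For $t=3$ the relevant homological degree is $j-2=2=t-1$, which is exactly the degree where Proposition~\ref{prop:lIntervalosLongMayorIgualt} produces nonzero homology. So for each $i\in\{2,\dots,3k-7\}$ it suffices to exhibit one proper induced subgraph $W_i<{\rm And}(k)'$ with $|V(W_i)|=i+4$ that has at least two intervals of length $\geq 3$. In that case $\widetilde{H}_2(\Delta(W_i))\cong\mathbb K^{\ell-1}\neq 0$ with $\ell\ge 2$, and Hochster's formula then gives $\beta_{i,i+4}(I({\rm And}(k)'))\geq 1$.

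For the construction, recall $n=3k-1$ and write $i+4=a+b$ with $a,b\geq 3$. This is possible because $i+4\geq 6$. Define
\[
V(W_i)=\{1,\dots,a\}\cup\{a+2,\dots,a+b+1\}.
\]
The removed vertices are then $0$, $a+1$ and $a+b+2,\dots,n-1$. The interval $\{1,\dots,a\}$ lies between the removed vertices $0$ and $a+1$. The interval $\{a+2,\dots,a+b+1\}$ lies between $a+1$ and the next removed vertex, which is $a+b+2$ if $a+b+2\le n-1$ and otherwise $0$. All remaining intervals have length $0$.

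Validity requires $a+b+1\leq n-1$, i.e.\ $i+4\leq 3k-3$, which is exactly $i\leq 3k-7$. At the extreme $i=3k-7$ only the two vertices $0$ and $a+1$ are removed, and $W_i$ is still proper. Hence $W_i$ has exactly $\ell=2$ intervals of length $\geq 3$, so $\widetilde{H}_2(\Delta(W_i))\cong\mathbb K$.

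The only point needing care is to check that the interval bookkeeping matches the definition preceding Proposition~\ref{prop:lIntervalosLongMayorIgualt}, in particular the wrap-around interval when $a+b+1=n-1$. I do not expect a genuine obstacle here: the heavy lifting, namely the Mayer--Vietoris computation, is already contained in Proposition~\ref{prop:lIntervalosLongMayorIgualt}.
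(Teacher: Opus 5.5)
Your proposal is correct and is essentially the paper's argument: Hochster's formula combined with Proposition~\ref{prop:lIntervalosLongMayorIgualt} for $t=3$, applied to a proper induced subgraph with two intervals of length at least $3$. The paper makes the specific choice $V(W_i)=\{1,2,3\}\cup\{5,\dots,i+5\}$, obtained by deleting $0$, $4$ and the largest vertices, which is your construction with $a=3$ and $b=i+1$.
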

\begin{proof}
     By Proposition \ref{prop:HochsterRothVanTuyl}, for every $i\in\{2,\dots,3k-7\}$ it is enough to find an induced subgraph $W_i$ of ${\rm And}(k)'$ with $|V(W_i)|=i+4$ such that $\widetilde{H}_2(\Delta(W_i))\neq0$.

First, we can consider the induced subgraph $W_{3k-7}$ of ${\rm And}(k)'$ on $\{v_1,\dots,v_{3k-3}\}$, with $0< v_1<\cdots<v_{3k-3}=n-1$ and $\{v_1,\dots,v_{3k-3}\}=V({\rm And}(k)')\setminus\{0,4\}$. Now we define $W_i:=W_{i+1}\setminus\{v_{i+5}\}$ for every $i\in\{2,\dots,3k-8\}$, which satisfy $|V(W_i)|=i+4$. Since all these graphs have at least two intervals of length at least 3, we conclude by Proposition \ref{prop:lIntervalosLongMayorIgualt}.\end{proof}

\section{Open problems and future work}

In this paper we introduced the family of graphs ${\rm GA}(t,k)'$, obtained from Generalized Andrásfai graphs by deleting the exterior Hamiltonian cycle. We proved that their edge ideals provide examples with arbitrary regularity and arbitrarily large projective dimension. More precisely, for every $t\ge1$ and $k\ge3$,
\[
\operatorname{reg}(I({\rm GA}(t,k)'))=t+2
\qquad \text{and}\qquad
\operatorname{pd}(I({\rm GA}(t,k)'))=t(k-2).
\]
Moreover, the regularity is only attained at the last step of the resolution and there is only one generator of degree ${\rm reg}(I(G))+{\rm pd}(I(G))$ in the last syzygy module. This extends previous results for complements of cycles and bipartite complements of even cycles.

Using the same methods developed in this paper, one can also treat the edge ideals of the original Generalized Andrásfai graphs ${\rm GA}(t,k)$ (see Figure \ref{fig:BettiAnd6}). In particular, one obtains
\[{\rm reg}(I({\rm GA}(t,k)))=t
\qquad \text{and}\qquad
{\rm pd}(I({\rm GA}(t,k)))=t(k-2)+2.
\]
Since the proofs require no essentially new ideas and would considerably increase the length of the manuscript, we have decided to omit them.

\begin{figure}[htb]
\centering
	\includegraphics[width=13.8cm]{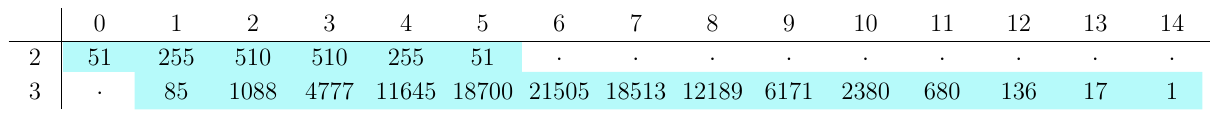} 
  	\caption{Betti diagram of the edge ideal of ${\rm And}(6) = {\rm GA}(3,6)$}
	\label{fig:BettiAnd6}
\end{figure}

Interestingly, one can also prove that ${\rm reg}(I(\GA)) = t$ by combining Lemma \ref{lema:HomologiaLinkSinCiclo} with the following fact: for every $k'> t(k - 1) + k + 1$, $\GA$ is isomorphic to an induced subgraph of ${\rm GA}(t,k')'$ via the map $x \mapsto (t+1)x$ for all $0 \leq x \leq t(k-1)+1$.

\medskip

Our results suggest several natural directions for future research.

For every graph $G$, the existence of induced subgraphs of the form ${\rm GA}(1,k)'$ with $k \geq 3$ characterizes whether ${\rm reg}(I(G)) > 2$. Similarly, for every bipartite graph $G$, the existence of induced subgraphs of the form ${\rm GA}(2,k)'$ with $k \geq 3$ characterizes whether ${\rm reg}(I(G)) > 3$. It would be interesting to describe a natural class of graphs such that for every graph $G$ in this class, the existence of induced subgraphs of the form $\GAsin$ with $k \geq 3$ characterizes whether ${\rm reg}(I(G)) > t + 1$. More generally, it would be interesting to have an answer to the following problem.
\begin{question}
For a fixed integer $r$, characterize the graphs $G$ such that
\[
\operatorname{reg}(I(G))>r.
\]
\end{question}

For $r=2$, the answer is given by Fr\"oberg's Theorem. The answer for $r \geq 3$ might depend on the base field $\mathbb K$ of the polynomial ring we are considering (see \cite{Kat06}). 
For $r=3$, the problem was recently solved by Kanno \cite{Kanno2026}. However, little seems to be known for larger values of $r$.

A second problem concerns the exact shape of the Betti diagrams of $I({\rm GA}(t,k)')$. We completely determined it for $t=3$, while for $t\geq 4$ we obtained upper and lower bounds. Our computations suggest that the Betti diagrams of these ideals follow the pattern  (see Figure \ref{fig:ConjeturaShape}):

\begin{conj}\label{conj:gasin}
For all $t,k\ge3$,  $\beta_{i,i+j}(I(\GAsin))\neq0$ if and only if 
\begin{itemize} \item $j\in\{2,\dots,t+1\}$ and $j-2\le i\le (j-1)(k-2)-1$, or
\item $(i,j)=\bigl(t(k-2),\,t+2\bigr)$.
\end{itemize}
\end{conj}

\begin{figure}[!ht]
 \centering
 	\includegraphics[width=15cm]{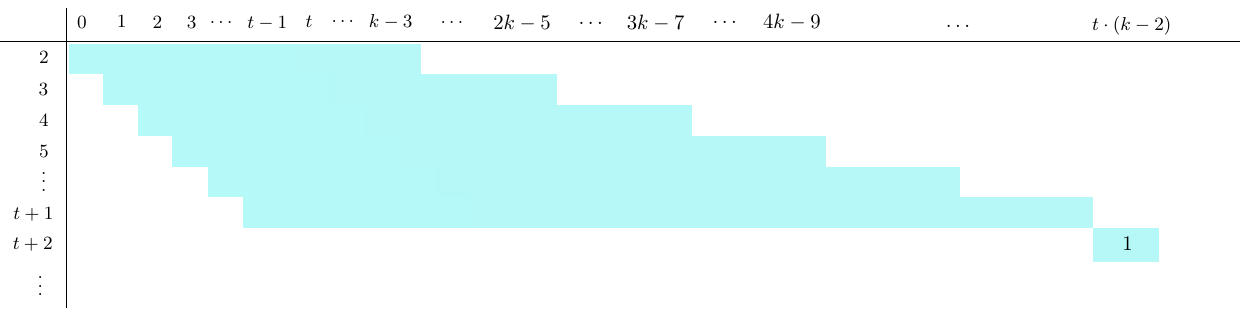}
    \caption{Shape of the Betti diagram of $I(\GAsin)$, according to Conjecture \ref{conj:gasin}}
        \label{fig:ConjeturaShape}
 \end{figure}

Finally, all graph families considered in this paper satisfy the identity
\[
\operatorname{reg}(I(G))+\operatorname{pd}(I(G))=|V(G)|.
\]
This equality also holds for several classical graph families, but it fails in general. It would therefore be interesting to understand when this happens.

\begin{question}
Characterize the graphs $G$ such that
\[
\operatorname{reg}(I(G))+\operatorname{pd}(I(G))=|V(G)|.
\]
\end{question}

For example, it would be natural to investigate whether this holds for large classes of graphs, such as circulant graphs or, more generally, Cayley graphs.

\section*{Acknowledgements}
The three authors were partially supported by the grant PID2022-137283NB-C22 funded by MICIU/AEI/10.13039/501100011033 and by ERDF/EU. Sara Asensio was also supported by European Social Fund Plus, \textit{Programa Operativo de Castilla y León}, and \textit{Junta de Castilla y León} via its \textit{Consejería de Educación}.

\bibliographystyle{plain}
\bibliography{biblio}
    
\end{document}